\begin{document}
\newtheorem{lem}{Lemma}[section]
\newtheorem{prop}{Proposition}
\newtheorem{con}{Construction}[section]
\newtheorem{defi}{Definition}[section]
\newtheorem{coro}{Corollary}[section]
\newcommand{\hf}{\hat{f}}
\newtheorem{fact}{Fact}[section]
\newtheorem{theo}{Theorem}
\newcommand{\Br}{\Poin}
\newcommand{\Cr}{{\bf Cr}}
\newcommand{\dist}{{\rm dist}\,}
\newcommand{\diam}{{\rm{diam}}\, }
\newcommand{\ord}{{\rm ord}\, }
\newcommand{\Dm}{{\rm Dm}\,}
\newcommand{\compose}{\circ}
\newcommand{\dbar}{\bar{\partial}}
\newcommand{\Def}[1]{{{\em #1}}}
\newcommand{\dx}[1]{\frac{\partial #1}{\partial x}}
\newcommand{\dy}[1]{\frac{\partial #1}{\partial y}}
\newcommand{\Res}[2]{{#1}\raisebox{-.4ex}{$\left|\,_{#2}\right.$}}
\newcommand{\sgn}{{\rm sgn}}
\def\br#1{\left(#1\right)}
\def\Bbb#1{\mbox{\mathfontb #1}}
\newcommand{\CC}{\mathbb{C}}
\newcommand{\DD}{{\mathbb D}}
\newcommand{\ovB}{{\overline{B}}}
\newcommand{\RR}{\mathbb{R}}
\newcommand{\NN}{\mathbb{N}}
\newcommand{\Z}{\mathbb{Z}}
\newcommand{\Q}{\mathbb{Q}}
\newcommand{\QQ}{\mathbb{Q}}
\newcommand{\ZZ}{\mathbb{Z}}
\newcommand{\tr}{\mbox{Tr}\,}
\newcommand{\C}{\mathbb{C}}
\def\grtsim{\mbox{\mathfonta\&}}
\def\gtrsim{\mbox{\mathfonta\&}}
\def\lesssim{\mbox{\mathfonta.}}
\def\D{\Bbb D}
\def\T{\mathcal{T}}
\def\K{\cal K}
\def\M{{\cal M}_d}
\newcommand{\area}{{\rm{area}}\,}
\newcommand{\HD}{\mbox{HD}\,}
\newcommand{\J}{\mathcal{J}}

\newenvironment{nproof}[1]{\trivlist\item[\hskip \labelsep{\bf Proof{#1}.}]}
{\begin{flushright} $\square$\end{flushright}\endtrivlist}
\newenvironment{proof}{\begin{nproof}{}}{\end{nproof}}

\newenvironment{block}[1]{\trivlist\item[\hskip \labelsep{{#1}.}]}{\endtrivlist}
\newenvironment{definition}{\begin{block}{\bf Definition}}{\end{block}}

\newtheorem{conjec}{Conjecture}

\newtheorem{com}{Comment}
\font\mathfonta=msam10 at 11pt
\font\mathfontb=msbm10 at 11pt
\def\Bbb#1{\mbox{\mathfontb #1}}
\def\lesssim{\mbox{\mathfonta.}}
\def\suppset{\mbox{\mathfonta{c}}}
\def\subbset{\mbox{\mathfonta{b}}}
\def\grtsim{\mbox{\mathfonta\&}}
\def\gtrsim{\mbox{\mathfonta\&}}

\newcommand{\Poin}{{\bf Poin}}
\newcommand{\Bo}{\Box^{n}_{i}}
\newcommand{\Di}{{\cal D}}
\newcommand{\gd}{{\underline \gamma}}
\newcommand{\gu}{{\underline g }}
\newcommand{\ce}{\mbox{III}}
\newcommand{\be}{\mbox{II}}
\newcommand{\F}{\cal{F}}
\newcommand{\Ci}{\bf{C}}
\newcommand{\ai}{\mbox{I}}
\newcommand{\dupap}{\partial^{+}}
\newcommand{\dm}{\partial^{-}}
\newenvironment{note}{\begin{sc}{\bf Note}}{\end{sc}}
\newenvironment{notes}{\begin{sc}{\bf Notes}\ \par\begin{enumerate}}%
{\end{enumerate}\end{sc}}
\newenvironment{sol}
{{\bf Solution:}\newline}{\begin{flushright}
{\bf QED}\end{flushright}}

\title{Analytic structures and harmonic measure at bifurcation locus}

\author{Jacek Graczyk\\
\small{Department of Mathematics}\\
\small{University of Paris XI}\\
\small{91405 Orsay Cedex}\\
\small{France}\\
\and
Grzegorz \'{S}wia\c\negthinspace tek
\thanks{Supported in part by Narodowe Centrum Nauki - grant 2015/17/B/ST1/00091.}\\
\small{Department of Mathematics and Information Science}\\ 
\small{Warsaw Technical University}\\
\small{ul. Koszykowa 75, 00-662 Warsaw}\\
\small{Poland}}
\maketitle
\abstract{We study conformal quantities at generic parameters with respect to the harmonic measure on the boundary of 
the connectedness loci ${\cal M}_d$ for unicritical polynomials $f_c(z)=z^d+c$. It is known that these parameters are structurally unstable and 
have stochastic dynamics. We prove $C^{1+\frac{\alpha}{d}-\epsilon}$-conformality, $\alpha = 2-\HD({\cal J}_{c_0})$,
of the parameter-phase space similarity maps $\Upsilon_{c_0}(z):\C\mapsto \C$  at typical $c_0\in \partial {\cal M}_d$
and establish that globally quasiconformal similarity maps $\Upsilon_{c_0}(z)$, $c_0\in \partial {\cal M}_d$, are 
$C^1$-conformal along external rays landing at
$c_0$ in $\C\setminus {\cal J}_{c_0}$ mapping onto the corresponding rays of ${\cal M}_d$. This conformal equivalence leads to the proof that 
the $z$-derivative of the similarity map $\Upsilon_{c_0}(z)$ 
at typical $c_0\in \partial {\cal M}_d$ is equal  
to $1/{\cal T}(c_0)$, where ${\cal T}(c_0)=\sum_{n=0}^{\infty}(D(f_{c_0}^n)(c_0))^{-1}$ is the transversality function. 

The paper builds analytical tools for a further study of  the extremal properties  of the harmonic measure on $\partial \M$, \cite{grasw}. In particular,
we will explain how a  non-linear dynamics creates abundance of hedgehog neighborhoods in $\partial \M$ effectively blocking a good  access of $\partial \M$ from the outside.}

\section{Introduction}
One of the main open problems in dynamical systems is the density of hyperbolic polynomials in the complex plane~\cite{fat, yoc1, smale}. 
Even in the simplest case of quadratic  polynomials $z^2+c$, the problem is far from being solved inspite of great deal of 
research. The main object of this study is the boundary of connectedness locus ${\cal M}_d$, $d\geq 2$, and its 
relations with the corresponding Julia sets ${\cal J}_c$ through local similarity maps. 
Understanding the fractal structure of ${\cal M}_d$  which is both  "self-similar" and "chaotic" is one of 
the most interesting  aspects of complex dynamics.

Since ${\cal M}_d$ is a {\em full} compact~\cite{etiuda,sibony}, Carath\'eodory's theorem implies that local connectivity of $\partial \M$ is equivalent  to the existence  of continuous extension of 
the Riemman map $\Psi:\hat\CC\setminus \D\mapsto \hat\CC\setminus {\cal M}_d$ tangent to the identity at $\infty$. By~\cite{etiuda},
the local connectivity of $\partial {\cal M}_d$ implies the density of hyperbolicity. This is not known, nor it is 
known whether every hyperbolic geodesic in $\CC\setminus {\cal M}_d$ lands. 

The Julia set ${\cal J}_{c}$ of a unicritical polynomial $f_{c}(z)=z^{d}+c$
is defined as the closure of all repelling periodic points of $f_{c}$, 
$$\J_c = \overline{\{z\in \C: \exists n\in \NN\;\; f_{c}^{n}(z)=z \mbox{ and }
|(f_{c}^{n})'(z)|>1\}}\;. $$

Let ${\cal M}_{d}$ be the set of all $c\in \C$ 
for which ${\cal J}_{c}$ is connected. When $c$ is outside $\M$ then Julia sets ${\cal J}_c$ are   totally disconnected. 
The boundary of ${\cal M}_{d}$ is {\em the topological bifurcation locus} of ${\cal J}_{c}$.

A mathematical interest in $\M$ goes beyond unicritical dynamics. C. McMullen  proved in~\cite{macun}  
that the bifurcation locus of any non-trivial holomorphic family of rational maps over the unit disk contains almost conformal copies of $\M$.
The distribution of the harmonic measure on
$\partial \M$ has  some extremal properties
and  it is in the same time  computationally accessible. For various relations with classical problems in complex analysis see~\cite{bs, bms, jo, puz, fine}.

Our goal is to  develop analytical tools to understand how dynamics unfolds
at generic parameters with respect to the harmonic measure on $\partial \M$. The current work is based on two ingredients which fit well into a program of J.-C. 
Yoccoz to study  the parameter space $\M$ through interaction of analytic and combinatorial structures. The first ingredient is an organization 
of the parameter space into a Markov system, proposed by J.-C. Yoccoz, which is asymptotically  stable with respect to the natural  holomorhic motions. 
The second one stems from~\cite{fine} and combines an outside combinatorics given by holomorphically moving B\"otker coordinates with some simple probabilistic models
of~\cite{grsw}. This ``outside'' approach of ~\cite{grsw, sm} allows to control the behavior of the harmonic measure on $\partial \M$ and use the 
results from harmonic analysis and 
potential theory to get a further information about the dynamics ~\cite{gniotek}. Another pertinent examples include~\cite{man,zak}. 

In~\cite{fine}, a system of  similarity functions $\Upsilon_{c_0}(z)$ was constructed 
which is parame\-trized by typical points $c_0\in \partial \M$ with respect to the harmonic measure. The  maps $\Upsilon_{c_0}(z)$ are quasiconformal 
and become asymptotically conformal. The main result of the paper, stated as Theorem~\ref{theo:continuity}, asserts that the similarity maps 
$\Upsilon_{c_0}(z)$ depend $C^1$-continuously along hyperbolic geodesics landing at $c_0$ even if the geodesics have a rather complicated geometry, 
spiralling in both directions infinitely many times~\cite{fine}. However, the oscillations can be controlled asymptotically by a universal function of 
Theorem~\ref{theo:flat}. The idea of comparing the phase  and parameter spaces dates back to the origins of complex  dynamics~\cite{etiuda}
and the similarity theorem  of T. Lei~\cite{tan} was one of  the first results in the area inspired by computer visualisations. 

Theorem~\ref{theo:continuity} has several applications but the most interesting  direction  from the point of view of the
dynamical systems is given  by the formula which relates the derivative of $\Upsilon_{c_0}(z)$ with the transversality function ${\cal T}(c)$, cf. formula
(\ref{equ:26mp,1}), introduced by M. Benedicks and L. Carleson 
in~\cite{Beca,Becaa}. In the Misiurewicz case, 
the formula was proven by J. Riviera-Letelier in~\cite{rivera} by methods exploiting  an underlying hyperbolicity. 
The Misiurewicz parameters  are defined by the condition that the critical point is not recurrent and thus of bounded combinatorial complexity.
The parameter selection methodes of Benedicks-Carleson~\cite{Beca, Becaa} rely on the fact that  ${\cal T}(c_0)\not = 0$ at the parameter $c_0$ which undergoes a perturbation.
Since  ${\cal T}(c)$ is analytic for $c$ from the complement of $\M$, it can not take the same value on large sets. The size of these sets will depend on 
integrability properties of  ${\cal T}(c)$~\cite{ahbe, car}. On the other hand, the failure of the transversality condition ${\cal T}(c_0)\not = 0$ endows dynamics 
with some weak expansion properties. If the series ${\cal T}(c)=\sum_{n=0}^{\infty}(Df_c^{n}(c))^{-1}$ converges absolutely then ${\cal J}_c$ is 
locally connected~\cite{rish, grsm1} and of Lebesgue measure zero~\cite{brstr}. 
The perturbative techniques of ~\cite{Beca,Becaa} were already applied in the complex quadratic setting in~\cite{bengra} and proven to work well in various 
holomorphic instances by  M. Aspenberg~\cite{Asp, Asp1}. A direct relation of the transversality function to  the Fatou conjecture for unicritical polynomials 
was shown by G. Levin in~\cite{lev}.  The transversality condition ${\cal T}(c_0)\not = 0$ was also intensively studied for real maps in the context of 
Jakobson's theorem~\cite{jak}, see for example~\cite{tsuji}, where the transversality condition was explicitely stated. 
The real methods are quite different than these adopted in the current paper and will not be further discussed.

\subsection{Similarity structures}
By Fatou's theorem, $\Psi:\hat\CC\setminus \D\mapsto \hat\CC\setminus {\cal M}_d$, tangent to the identity at $\infty$, extends radially almost everywhere on the unit circle with respect to the normalized $1$-dimensional
Lebesgue measure $\lambda_1$.  The harmonic measure $\omega$ on $\partial \M$ is equal to $\Psi_{*}(\lambda_1)$. If $c\in \partial \M$ then $\J_c$ is a full compact. 
Denote by $\Psi_c:\hat\CC\setminus \D\mapsto \hat\CC\setminus {\cal J}_c$ the Riemann map tangent  to the identity at $\infty$. We have a one parameter  family $\omega_{c\in \partial \M}$ of the  harmonic measures supported on the corresponding Julia sets ${\cal J}_c$,
$\omega_c=(\Psi_c)_{*}(\lambda_1)\;.$
 If $c\in \partial \M$ is typical with respect to $\omega$ then the same is true for the critical orbit  $\{f_c^n(c)\}_{n\in \NN}$  with respect to the harmonic measure on ${\cal J}_c$, $\omega_c$ is also $f_c$-invariant, ergodic, and of the maximal entropy $\log d$~\cite{bro}. 
 
The multifaceted relation  between the harmonic measure $\omega$ and dynamics can be quantified.
Theorem~3 of~\cite{fine} describes the similarity between ${\cal M}_d$ and ${\cal J}_{c_0}$  through one-parameter family of asymptotically conformal maps $\Upsilon_{c_0}:\C\mapsto \C$, with $c_0$ typical  with respect to the harmonic measure
on $\partial \M$. We state it as Fact~\ref{theo:basic}. A certain complexity in the formulation of Fact~\ref{theo:basic} is related to the introduction of a full compact $Z$ which does not have a canonical  dynamical meaning. The role of $Z$ is to "enlarge" ${\cal J}_{c_0}$ to compensate for the fact that $\M$ and ${\cal J}_{c_0}$ have different topological properties,  $\M$ has a non-empty and dense interior~\cite{mss} while the corresponding Julia set ${\cal J}_{c_0}$ is a dendride. The compact $Z$ depends on a construction
as the critical orbit $\{f^n_{c_0}(c_0)\}_{n\in \NN}$ is dense in ${\cal J}_{c_0}$ and  lacks any 
$c$-stable hyperbolic structure~\cite{grsm}. Since outside $Z$, the similarity map $\Upsilon_{c_0}$ agrees with the natural univalent map 
$\Psi\circ\Psi^{-1}_{c_0}:\hat{\C}\setminus {\cal J}_{c_0}\mapsto \hat{\C}\setminus  \M$, $Z$ can be considered as an asymptotically negligible  correction of ${\cal J}_{c_0}$ near $c_0$ so that 
$\Psi\circ\Psi^{-1}_{c_0}$ extends accross $Z$ to a global quasiconformal map that fails to be analytic on $Z$ but its distortion can be still  controlled through quasiconformal constants.
\begin{fact}\label{theo:basic}
for almost every $c_0\in \partial {\cal M}_{d}$ with respect to the
harmonic measure there exist
a full compact $Z$, $c_0\in \partial Z$, a Jordan disk $U\ni c_0$,
 and a quasi-conformal map $\Upsilon_{c_0}$ of the plane, $\Upsilon_{c_0}(c_0)=c_0$,
 with the following properties: 
\begin{description}
\item{\rm (i)} 
 ${\cal J}_{c_0}\cap Z$ is connected, 
${\cal J}_{c_0}\cap \overline{U}={\cal J}_{c_0}\cap Z$, and every $z\in \partial Z\setminus \{c_0\}$ is non-recurrent,
\item{\rm (ii)}   $\lim_{r\rightarrow 0}
\frac{1}{r}d_{H}(Z\cap \D(c_0,r), {\cal J}_{c_0}\cap \D(c_0,r))=0$, where $d_H$ stands for the Hausdorff distance,
\item{\rm (iii)}  $\lim_{r\rightarrow 0}\frac{1}{r^{2}}\;
\area\left(Z \cap \D(c_0,r)\right)=0$,
\item{\rm (iv)} 
$\Upsilon_{c_0}(Z\cap U)\supset {\cal M}_{d}\cap \Upsilon_{c_0}(U)$, $Z$ is disjoint with the hyperbolic geodesic $\gamma\subset \C \setminus {\cal J}_{c_0}$ landing at  $c_0$, and  { $\lim_{\gamma\ni \xi \rightarrow c_0}d_{H}(\xi, {\cal J}_{c_0})/d_{H}(\xi,Z)=1,$}
\item{\rm (v)} $\Upsilon_{c_0}$ on $U\setminus Z$ is equal to $\Psi\circ \Psi_{c_0}^{-1}$ where $\Psi_{c_0}$ and $\Psi$ are uniforming maps from $\{|z|>1\}$
on $\hat{\C}\setminus {\cal J}_{c_0}$
and $\hat{\C}\setminus \M$, respectively, tangent to the 
identity at $\infty$,
\item{\rm (vi)} the maximal dilation of $\Upsilon_{c_0}$ restricted to $\D(c_0,r)$
tends to $1$ when  $r$ tends to $0$.
\item{\rm (vii)} $\Upsilon_{c_0}$ is conformal at $c_0$. 
\end{description}
\end{fact}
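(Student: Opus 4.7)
The plan is to construct the triple $(Z, U, \Upsilon_{c_0})$ in three stages: first isolate a full-harmonic-measure set of parameters enjoying strong dynamical typicality, second assemble $Z$ from shrinking Yoccoz puzzle pieces in the dynamical plane of $f_{c_0}$, and third promote the canonical analytic map $\Psi\circ\Psi_{c_0}^{-1}$ on $\hat\C\setminus\J_{c_0}$ to a global quasiconformal map by extending it across $Z$ via a holomorphic motion in $c$.

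For the first stage I would exploit that $\omega_{c_0}$ equals Brolin's measure of maximal entropy on $\J_{c_0}$, so Birkhoff's ergodic theorem gives $\omega_{c_0}$-equidistribution of the critical orbit $\{f_{c_0}^n(c_0)\}$, positive Lyapunov exponent $\log d$, and by Borel--Cantelli no faster than sub-polynomial recurrence of the critical value to itself. This produces scales $r_n\to 0$ along which one enjoys both slow recurrence and uniform Koebe-type expansion estimates along the relevant orbit segments; all genuinely recurrent behavior can be confined to the single point $c_0$.

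At those scales I would take $Z$ to be a nested intersection of Yoccoz puzzle pieces at $c_0$, bounded by external rays landing at iterated preimages of $c_0$. Property (i) then follows because the typicality of $c_0$ forces every other point of $\partial Z$ to be non-recurrent; (ii) and (iii) follow from geometric decay of puzzle diameters controlled by bounded distortion on the expanding orbit segments, combined with equidistribution bounds on the dendritic ``correction'' sitting outside $\J_{c_0}$ but inside $Z$. In the parameter plane the Yoccoz parapuzzle pieces move holomorphically in $c$; projecting this motion through $\Psi\circ\Psi_c^{-1}$ at $c=c_0$ and comparing it with Tan Lei type similarity matching will give inclusion (iv). Disjointness of $Z$ from the geodesic $\gamma$ landing at $c_0$ is arranged by choosing the bounding external rays to one side of $\gamma$ in the natural cyclic order, and the Hausdorff ratio $d_H(\xi,\J_{c_0})/d_H(\xi,Z)\to 1$ follows from (ii).

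The third stage uses the $\lambda$-lemma of Ma\~n\'e--Sad--Sullivan and Slodkowski: the motion of the external-ray system and of the non-recurrent part of $\J_{c_0}$ through nearby parameters extends to a global quasiconformal homeomorphism of $\C$, which I take as $\Upsilon_{c_0}$; its restriction off $Z$ is tautologically $\Psi\circ\Psi_{c_0}^{-1}$, giving (v). For (vi) and (vii) I would invoke the Teichm\"uller--Wittich--Belinskii theorem: the Beltrami coefficient $\mu$ of $\Upsilon_{c_0}$ is supported on $Z$, so property (iii) yields $\int_{\D(c_0,1)} |\mu(\zeta)|\,|\zeta-c_0|^{-2}\,dA(\zeta) < \infty$, forcing asymptotic conformality on shrinking disks and conformality at the single point $c_0$. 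The main obstacle will be the simultaneous sizing of $Z$: it must be thin enough for the Teichm\"uller--Wittich--Belinskii integral to converge and for (ii)--(iii) to hold, yet thick enough for (iv) to capture the hyperbolic and parabolic components of $\M$ clustering at $c_0$, and this balance requires exploiting the refined probabilistic behavior of the critical orbit at harmonic-measure-typical parameters rather than purely deterministic combinatorics.
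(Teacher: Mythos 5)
Your overall architecture (Yoccoz pieces around $c_0$, a holomorphic motion extended by the $\lambda$-lemma to a global quasiconformal map agreeing with $\Psi\circ\Psi_{c_0}^{-1}$ off $Z$, and a Teichm\"uller--Wittich--Belinski\u{\i} argument at $c_0$) is in the spirit of the construction the paper relies on; but the paper itself does not reprove this: it quotes Theorem~3 of~\cite{fine} for every claim except the ratio limit in (iv), and proves only that limit, in the Appendix. There the key point is that every component of $\partial Z\setminus{\cal J}_{c_0}$ sits inside a Yoccoz piece separated from its boundary (and hence from $\gamma$) by an annulus of modulus $m\to\infty$, so Teichm\"uller's module theorem gives $d_H(\xi,{\cal J}_{c_0})\leq(1+Ce^{-m})\,d_H(\xi,Z)$. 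Your sketch instead asserts that this limit ``follows from (ii)''. It does not: (ii) only says that at scale $r$ the set $Z$ stays within $o(r)$ of ${\cal J}_{c_0}$ in Hausdorff distance, whereas along the geodesic one has $\dist(\xi,{\cal J}_{c_0})\sim|\xi-c_0|^{1+o(1)}$, which is in general far smaller than that $o(r)$ tolerance; a set within $o(r)$ of the Julia set could still approach $\xi$ much more closely than ${\cal J}_{c_0}$ does and ruin the ratio. So the one claim that actually requires a proof here is exactly the one your argument skips.

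The step ``(iii) $\Rightarrow$ (vii) via Teichm\"uller--Wittich--Belinski\u{\i}'' is also a genuine gap. With $\mu$ supported on $Z$, the hypothesis $\area(Z\cap\D(c_0,r))=o(r^2)$ does \emph{not} imply $\int_{\D(c_0,r)}|\mu(\zeta)|\,|\zeta-c_0|^{-2}\,d\lambda_2(\zeta)<\infty$: if the area density in the dyadic annulus at scale $2^{-k}$ decays like $1/k$, the integral diverges like $\sum_k 1/k$, and adding (vi) does not save it since the dilatation may decay arbitrarily slowly. Convergence of the integral (\ref{equ:belinski}) needs a quantitative rate, which is what the dynamical estimates supply --- in this paper the bound $\area(Z\cap\D(c_0,r))\lesssim r^{2+\frac{\alpha}{d}-\epsilon}$ of Theorem~\ref{theo:24xp,1}, and in~\cite{fine} the corresponding puzzle-modulus estimates. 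Similarly, (vi) is not produced by Slodkowski's extension alone, which yields some fixed $K$ attached to the motion; making the dilatation tend to $1$ on shrinking disks (and obtaining the inclusion $\Upsilon_{c_0}(Z\cap U)\supset{\cal M}_d\cap\Upsilon_{c_0}(U)$ rather than a mere topological conjugacy of combinatorics) is the quantitative heart of the construction in~\cite{fine}, which your outline leaves untouched.
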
 
The only claim of Fact~\ref{theo:basic} which is not contained in Theorem~3 of~\cite{fine} is the limit in {\rm (iv)}.  A short proof of {\rm (iv)} is  
delegated to Appendix.

Recall that a quasi-conformal mapping $\Upsilon$ is $(1+\beta)$-conformal at $z_0$, $\beta\geq 0$ if 
\[\Upsilon(z)= \Upsilon(z_0)+ \Upsilon'(z_0)(z-z_0)+\epsilon(|z-z_0|)\;,\]
with $\Upsilon'(z_0)\not = 0$ and $\lim_{z\rightarrow z_0} \frac{\epsilon(|z-z_0|)}{|z-z_0|^{1+\beta}}=0$.
The $1$-conformal map is called  conformal.

The proof of conformality of $\Upsilon$  in ~\cite{fine} was based on   an integral condition of Teichm\"uller, Wittich, and Belinski\u{\i}. 
If the Beltrami coefficient   $\mu(z)=\Upsilon_{\overline{z}}(z)/\Upsilon_{z}(z)$  around $c$ satisfies 
\begin{equation}\label{equ:belinski}
\int_{\DD(c,r)} \frac{|\mu(z)|}{|z-c|^2} ~dxdy < \infty
\end{equation}
for some positive $r$, then $\Upsilon$ is conformal at $c$, see~\cite{lehvi}.

\paragraph{Smooth continuity of similarity map along hyperbolic geodesics.}
Let $\gamma$ denote the hyperbolic geodesic of $\C\setminus {\cal J}_{c_0}$ which
lands at $c_0$, $Z$ denotes the continuum from Fact~\ref{theo:basic} and $\chi_Z$ is the indicator function.
Theorem~\ref{theo:30xa,1} states that there exist a bound $o(R)$, $\lim_{R\rightarrow 0^+} o(R)=0$, and $R_0>0$ such
that for every $z_0 \in \gamma$ and if $|z_0-c_0|<R_0$
\begin{equation}\label{equ:int}
 \int_{D(z_0,R)} \frac{\chi_Z(w)\,}{|z_0-w|^2} ~d\lambda_2(w)\leq
o(R) \;, 
\end{equation}
where $\lambda_2$ is $2$-dimensional Lebesgue measure.

We will need the following version of uniform conformality proved by V. Gutlyanski\u{\i} and O. Martio in~\cite{gutmar} (Theorem~1.4).
\begin{fact}\label{fa:guma}
Let $F$ be a quasiconformal self-mapping of the complex plane
with complex dilatation $\mu$ and let ${\cal K}\subset \C$ be a compact set. If there are positive
constants $R$ and $M$ such that
$$\int_{|z-w|<R} \frac{|\mu(w)|^{2}}{|z-w|^2} ~ d\lambda_2(w) \leq  M$$
holds for every $z \in  {\cal K}$  and there exists a finite limit
$$ \lim_{r\mapsto 0} \int_{r<|z-w|<R} \frac{\mu(w)}{(z-w)^2} ~ d\lambda_2(w)$$
uniformly for $z \in {\cal K}$, then the mapping $F$ is
conformally differentiable on ${\cal K}$
and the complex derivative of $f'(z)$ is continuous on $\cal K$.
\end{fact}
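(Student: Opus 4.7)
The plan is to establish both conformal differentiability and continuity of the derivative simultaneously by expressing $F$ through a Neumann series in the Beurling--Ahlfors transform $T$ of $\mu$, and then extracting the two conclusions from the two hypotheses. Up to postcomposition with a M\"obius map one may assume $F$ is principal at $\infty$, so that $F_{\bar z}=\mu F_z$ together with the Ahlfors--Bers identity $F_z = 1 + Th$, where $h:=F_{\bar z}$, gives upon iteration
\[ h = \mu + \mu\,T\mu + \mu\,T(\mu\,T\mu) + \cdots ,\]
convergent in $L^p$ for $p$ close to $2$ depending on $\|\mu\|_\infty$.

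From the Cauchy--Pompeiu representation $F(z)=z-\frac{1}{\pi}\int h(w)(w-z)^{-1}\,d\lambda_2(w)$, the difference quotient at $z_0\in{\cal K}$ reads
\[ \frac{F(z)-F(z_0)}{z-z_0} = 1 - \frac{1}{\pi}\int \frac{h(w)}{(w-z)(w-z_0)}\,d\lambda_2(w).\]
I would split the integration region into $|w-z_0|\geq R$, where the kernel is smooth in $z$ near $z_0$ and yields a continuous contribution depending continuously on $z_0\in{\cal K}$, and $|w-z_0|<R$, where one passes as $z\to z_0$ to the principal value
\[ -\frac{1}{\pi}\,\lim_{r\to 0}\int_{r<|w-z_0|<R}\frac{h(w)}{(w-z_0)^2}\,d\lambda_2(w).\]
Substituting the Neumann series for $h$ splits this inner integral into its linear term in $\mu$ --- which is precisely the principal value hypothesized in the Fact --- and higher-order multilinear terms. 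The $L^2$ hypothesis $\int_{|w-z_0|<R}|\mu(w)|^2|w-z_0|^{-2}\,d\lambda_2\le M$, combined with Cauchy--Schwarz and the $L^p$-boundedness of $T$, controls those multilinear tails and yields a remainder of order $o(|z-z_0|)$ in the Cauchy--Pompeiu formula, uniformly over $z_0\in{\cal K}$. Uniform existence of the principal value then delivers both the conformal differentiability of $F$ at each $z_0\in{\cal K}$ and the continuity of $F'$ on ${\cal K}$, since each of the three contributions (far-field, principal value, error) varies continuously, or uniformly vanishingly, with $z_0\in{\cal K}$.

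The main obstacle is the multilinear estimate: one must show that the $L^2$ integral assumption, localized around each $z_0\in{\cal K}$, survives compositions with the singular operator $T$ and produces an $o(|z-z_0|)$ error when plugged back into the difference quotient, uniformly in $z_0$. This is essentially a weighted Calder\'on--Zygmund type estimate where the weight $|w-z_0|^{-2}$ has to be transported through iterated Beurling transforms; keeping careful track of how the constants depend on $M$, on the rate of convergence of the principal-value integral, and on $\|\mu\|_\infty$ is the delicate technical point, and is the step where the $L^2$ hypothesis (rather than the weaker $L^1$ condition of Teichm\"uller--Wittich--Belinski\u{\i}) is genuinely used.
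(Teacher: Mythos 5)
The paper does not prove this statement at all: Fact~\ref{fa:guma} is imported verbatim from Gutlyanski\u{\i}--Martio \cite{gutmar} (their Theorem~1.4), so what you are attempting is a full reproof of that theorem, and your sketch does not yet amount to one. The decisive step is exactly the one you defer to the final paragraph. First, the hypothesis gives the uniform principal value only for $\mu$, whereas your difference quotient involves $h=F_{\bar z}=\mu+\mu T\mu+\cdots$; you must show that the principal values of every higher-order term exist, are continuous in $z_0$, and sum, and the tool you invoke for this --- Cauchy--Schwarz against the hypothesis $\int_{|w-z_0|<R}|\mu|^2|w-z_0|^{-2}\,d\lambda_2\le M$ plus $L^p$-boundedness of $T$ --- does not deliver it: the weight $|w-z_0|^{-2}$ is precisely the borderline exponent excluded from the $A_2$ class in the plane, so there is no weighted Calder\'on--Zygmund bound that lets you ``transport'' this weight through iterated Beurling transforms, and no substitute argument is given. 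Second, even for the linear term the passage from the moving kernel $\bigl((w-z)(w-z_0)\bigr)^{-1}$ to the principal-value kernel $(w-z_0)^{-2}$ is not justified by your two-region split $|w-z_0|\gtrless R$: the real difficulty sits in the near-diagonal region $|w-z_0|\lesssim|z-z_0|$, where the natural Cauchy--Schwarz pairing leaves the factor $|w-z|^{-2}$, which is not locally integrable in two dimensions, so an additional, finer decomposition (this is where the $L^2$ hypothesis is genuinely used in Gutlyanski\u{\i}--Martio's argument) is required and is missing.

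There is also a smaller but real normalization gap: a quasiconformal self-map of the plane need not have compactly supported $\mu$, and postcomposition with a M\"obius map does not make it principal; the Cauchy--Pompeiu representation $F(z)=z-\tfrac1\pi\int h(w)(w-z)^{-1}d\lambda_2(w)$ and the $L^p$-convergent Neumann series require either compactly supported dilatation or a factorization $F=F_1\circ F_2$ with the far-field part handled separately, and this should be stated and used. In short, the architecture (Neumann series plus Cauchy--Pompeiu, splitting into far field, principal value, and error) is a reasonable plan and differs from the paper only in that the paper simply cites the result, but as written the proposal asserts rather than proves the two estimates on which everything rests, so it cannot be accepted as a proof.
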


Combining the estimate (\ref{equ:int}) and Fact~\ref{fa:guma}, we obtain a version of uniform similarity along hyperbolic geodesics.

\begin{theo}\label{theo:continuity}
The derivative  $D_z\Upsilon_{c_0}(z)$ of the similarity map of
Fact~\ref{theo:basic} is continuous along the geodesic of
$\hat{\CC}\setminus J_{c_0}$ landing at $c_0$ for a typical point
$c_0\in \partial\M$ with respect to the  harmonic measure $\omega$.
\end{theo}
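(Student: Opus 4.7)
The plan is to derive Theorem~\ref{theo:continuity} as a direct consequence of the Gutlyanski\u{\i}--Martio criterion (Fact~\ref{fa:guma}) applied to the similarity map $\Upsilon_{c_0}$ on a compact arc of the geodesic $\gamma$ with one end at $c_0$. The key observation that makes the two hypotheses of Fact~\ref{fa:guma} reduce to the single geodesic-localised estimate (\ref{equ:int}) is property (v) of Fact~\ref{theo:basic}: on $U\setminus Z$ the map $\Upsilon_{c_0}$ agrees with the holomorphic composition $\Psi\circ \Psi_{c_0}^{-1}$, so its Beltrami coefficient $\mu$ is supported inside $Z$. Since $\Upsilon_{c_0}$ is globally $K$-quasiconformal we have $|\mu(w)|\leq k<1$, and the pointwise bounds
\[ |\mu(w)|\leq \chi_{Z}(w), \qquad |\mu(w)|^{2}\leq k\,\chi_{Z}(w) \]
are in force everywhere in $\CC$.

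With this in hand I fix $R=R_{0}$ from Theorem~\ref{theo:30xa,1} and take the compact set
\[ \mathcal{K}:=\{\xi\in \gamma\colon |\xi-c_{0}|\leq R_{0}/2\}\cup\{c_{0}\}, \]
which is a compact arc of the geodesic closed up at its landing point. The first hypothesis of Fact~\ref{fa:guma} is then immediate: for every $z\in \mathcal{K}$,
\[ \int_{|z-w|<R_{0}} \frac{|\mu(w)|^{2}}{|z-w|^{2}}\,d\lambda_{2}(w)\ \leq\ k\int_{D(z,R_{0})}\frac{\chi_{Z}(w)}{|z-w|^{2}}\,d\lambda_{2}(w)\ \leq\ k\cdot o(R_{0}), \]
the last inequality being exactly (\ref{equ:int}). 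For the second hypothesis, I use the same estimate to control the annular tails of the singular integral: for $0<r_{1}<r_{2}\leq R_{0}$ and any $z\in \mathcal{K}$,
\[ \left|\int_{r_{1}<|z-w|<r_{2}} \frac{\mu(w)}{(z-w)^{2}}\,d\lambda_{2}(w)\right|\ \leq\ \int_{D(z,r_{2})}\frac{\chi_{Z}(w)}{|z-w|^{2}}\,d\lambda_{2}(w)\ \leq\ o(r_{2}). \]
Because the bound $o(r_{2})$ is independent of $z\in \mathcal{K}$ and tends to $0$ with $r_{2}$, the family of truncated integrals is uniformly Cauchy as $r\to 0^{+}$ and therefore converges uniformly on $\mathcal{K}$ to a finite limit. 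Fact~\ref{fa:guma} then yields conformal differentiability of $\Upsilon_{c_0}$ on $\mathcal{K}$ with a continuous complex derivative, which is precisely the continuity of $D_{z}\Upsilon_{c_0}$ along $\gamma$ at $c_{0}$ asserted by the theorem.

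The actual work, and the only real obstacle, is the input estimate (\ref{equ:int}), namely Theorem~\ref{theo:30xa,1}: one must upgrade the $o(r^{2})$ area bound for $Z\cap D(c_{0},r)$ in property (iii) of Fact~\ref{theo:basic}, which says $Z$ is negligible as seen \emph{from the landing point $c_{0}$}, to the uniform Dini-type estimate
\[ \int_{D(z_{0},R)}\frac{\chi_{Z}(w)}{|z_{0}-w|^{2}}\,d\lambda_{2}(w)=o(R) \]
valid as $z_{0}$ slides along $\gamma$ towards $c_{0}$. This requires fine off-axis geometric control of $Z$ along the geodesic and is presumably where the Yoccoz--Markov combinatorial structure and the outside probabilistic model of $\partial\mathcal{M}_{d}$ developed earlier in the paper enter.
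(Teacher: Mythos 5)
Your proposal is correct and follows essentially the same route as the paper: the paper likewise obtains Theorem~\ref{theo:continuity} by combining the geodesic-uniform Dini estimate (\ref{equ:int}) of Theorem~\ref{theo:30xa,1} with the Gutlyanski\u{\i}--Martio criterion of Fact~\ref{fa:guma}, with the real work deferred to the proof of Theorem~\ref{theo:30xa,1}. Your write-up in fact supplies slightly more detail than the paper (the support of $\mu$ in $Z$ via Fact~\ref{theo:basic}(v) after shrinking $R_0$ so the relevant disks lie in $U$, and the uniform Cauchy argument for the second hypothesis of Fact~\ref{fa:guma}), and is fine as it stands.
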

We will discuss below three applications of Theorem~\ref{theo:continuity} and the similarity structures to  some known open problems in complex dynamics.

\subsection{Deep points}

By~\cite{grsw,sm,grsm}, $\HD({\cal J}_{c_0})<2$ for almost all $c_0\in M$ with respect to the harmonic measure.
Let $\alpha=2-\HD({\cal J}_{c_0})$. Theorem~\ref{theo:24xp,1} states that for any $\epsilon>0$, 
\begin{equation}\label{equ:deep}
\lim_{r\rightarrow 0^+} \frac{|Z\cap
    \D(c_0,r)|}{r^{2+\frac{\alpha}{d}-\epsilon}}=0 
   ,
   \end{equation}
where $|\cdot|$ stands for $2$-dimensional Lebesgue measure. 
Since the similarity map $\Upsilon_{c_0}(z)$ is conformal at $z=c_0$ and $\Upsilon_{c_0}(Z)\subset \M$, we can transport the estimate (\ref{equ:deep}) to the parameter space proving that a generic parameter $c_0\in \partial \M$ with respect to the harmonic measure is $(\frac{\alpha}{d} - \epsilon)$-measurably deep with respect to
$\C\setminus \M$ according to the definition of~\cite{mcm}.
\begin{theo}\label{theo:deep}
For generic parameter $c_0$ with respect to the harmonic measure on $\partial \M$ and every  $\epsilon>0$, 
$$
\lim_{c \rightarrow c_0} \frac{\left|\M\cap
    \D(c_0,|c-c_0|)\right|}{|c_0-c|^{2+\frac{\alpha}{d}-\epsilon}}=0.
  $$ 
\end{theo}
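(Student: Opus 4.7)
The plan is to transport the deep-point estimate (\ref{equ:deep}) for the set $Z$ in the dynamical plane of $f_{c_0}$ to the parameter plane via the similarity map $\Upsilon_{c_0}$, using the containment in Fact~\ref{theo:basic}\,(iv), the conformality of $\Upsilon_{c_0}$ at $c_0$ from (vii), and the asymptotic dilation control (vi) that permits an application of Astala's area distortion.

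The first step is to linearize $\Upsilon_{c_0}$ at $c_0$. Writing $A = \Upsilon'_{c_0}(c_0) \neq 0$, conformality gives
\[
\Upsilon_{c_0}(z) = c_0 + A(z-c_0) + o(|z-c_0|),
\]
so that for any fixed $\delta > 0$ and all sufficiently small $r$, the inclusion $\D(c_0,r) \subset \Upsilon_{c_0}(U)$ holds and $\Upsilon_{c_0}^{-1}(\D(c_0,r)) \subset \D(c_0, r')$ with $r' := (1+\delta)r/|A|$. Combining with Fact~\ref{theo:basic}\,(iv) yields
\[
\M \cap \D(c_0,r) \subset \Upsilon_{c_0}\!\bigl(Z \cap \D(c_0, r')\bigr).
\]

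The second step bounds the area of this image by Astala's area distortion. Rescale to the unit disk by setting $\Phi_{r'}(w) = r'^{-1}\bigl(\Upsilon_{c_0}(c_0 + r'w) - c_0\bigr)$; by (vi), its maximal dilation $K_{r'}$ tends to $1$ as $r' \to 0$, and by the linearization $\Phi_{r'}(\D)$ is contained in a disk of radius $\leq |A|(1+\delta)$. Astala's inequality, applied after normalizing the image to $\D$, gives
\[
\bigl|\Upsilon_{c_0}(Z \cap \D(c_0,r'))\bigr| \leq C\, r'^{\,2(1-1/K_{r'})} \, \bigl|Z \cap \D(c_0,r')\bigr|^{1/K_{r'}}
\]
with $C$ bounded as $r' \to 0$.

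Finally, apply Theorem~\ref{theo:24xp,1} with exponent $\epsilon/2$ in place of $\epsilon$ to obtain $|Z \cap \D(c_0,r')| = o\bigl(r'^{\,2+\alpha/d-\epsilon/2}\bigr)$. Substituting, the total exponent of $r'$ on the right is
\[
2\bigl(1 - 1/K_{r'}\bigr) + \frac{2+\alpha/d-\epsilon/2}{K_{r'}} \;=\; 2 + \frac{\alpha/d-\epsilon/2}{K_{r'}},
\]
which exceeds $2+\alpha/d-\epsilon$ once $K_{r'}$ is sufficiently close to $1$. Since $r' = \Theta(r)$, this yields $|\M \cap \D(c_0,r)| = o(r^{2+\alpha/d - \epsilon})$, as required. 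The main subtlety is that Astala's exponent $1/K_{r'}$ runs the wrong way for a sharp estimate; this is resolved by exploiting the dilation control (vi) to shrink the loss arbitrarily and absorbing it into the $\epsilon/2$ slack in (\ref{equ:deep}).
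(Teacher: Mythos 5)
Your proposal is correct and follows essentially the same route as the paper: Theorem~\ref{theo:24xp,1} (estimate (\ref{equ:deep})) is transported to the parameter plane through the inclusion in Fact~\ref{theo:basic}\,(iv), the conformality of $\Upsilon_{c_0}$ at $c_0$, and the dilatation control (vi). The only difference is that you make explicit, via Astala's area distortion with $K_{r'}\to 1$ and the $\epsilon/2$ slack, the quasiconformal area-distortion step that the paper leaves implicit in its one-line transport remark; the exponent bookkeeping is correct.
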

Another consequence of the estimate (\ref{equ:deep}) is an improved integrability in (\ref{equ:belinski}). Theorem~2.25 in \cite{mcm} asserts  that if $c_0$ is a
$\delta$-deep point of $\C\setminus \M$ and $\Upsilon:\C\mapsto\C$  $K$-quasiconformal map then $\Upsilon$ is $(1+\beta(\delta,K))$-conformal at $c_0$. The dilatation
of the similarity map $\Upsilon$ tends to $1$ when $c$ approaches $c_0$.
\begin{coro}\label{theo:conf}
The similarity map $\Upsilon$ is  $(1+\frac{\alpha}{d}-\epsilon)$-conformal for every $\epsilon>0$ and almost all  $c_0\in \partial \M$ with respect to the harmonic
measure.
\end{coro}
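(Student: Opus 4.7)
The plan is a short deduction: combine the deep-point estimate of Theorem~\ref{theo:deep} with the asymptotic conformality property (vi) of Fact~\ref{theo:basic} and apply McMullen's quantitative rigidity theorem (Theorem~2.25 of~\cite{mcm}) cited just before the statement.

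First, I would fix $\epsilon>0$ and set $\delta=\frac{\alpha}{d}-\frac{\epsilon}{2}$. Theorem~\ref{theo:deep} says that at a typical $c_0\in\partial\M$ with respect to $\omega$,
\[
\frac{|\M\cap \D(c_0,r)|}{r^{2+\delta}}\longrightarrow 0\quad\text{as }r\to 0^+,
\]
which is exactly the statement that $c_0$ is a $\delta$-deep point of $\C\setminus\M$ in the sense of~\cite{mcm}. Here I am using that $\Upsilon_{c_0}(Z)\supset \M\cap \Upsilon_{c_0}(U)$ together with the conformality of $\Upsilon_{c_0}$ at $c_0$ (item (vii)) to convert the dynamical estimate (\ref{equ:deep}) into a statement in the parameter space, as in the paragraph preceding Theorem~\ref{theo:deep}.

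Next, I would invoke McMullen's theorem: if $c_0$ is a $\delta$-deep point of $\C\setminus\M$ and $\Upsilon:\C\to\C$ is a $K$-quasiconformal homeomorphism, then $\Upsilon$ is $(1+\beta)$-conformal at $c_0$ with an exponent $\beta=\beta(\delta,K)>0$ satisfying $\beta(\delta,K)\to\delta$ as $K\to 1$. The required input — global quasiconformality — is built into the similarity map, so the only thing to verify is that the dilatation constant can be taken close enough to $1$ for $\beta(\delta,K)\geq \delta-\epsilon/2=\frac{\alpha}{d}-\epsilon$.

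The one point that is not completely routine is this last step, and it will be the main obstacle: the global dilatation of $\Upsilon_{c_0}$ is some fixed $K_0\geq 1$, and one cannot expect $K_0$ to be close to~$1$. However, property (vi) of Fact~\ref{theo:basic} says that the maximal dilation of $\Upsilon_{c_0}$ on $\D(c_0,r)$ tends to $1$ as $r\to 0$. I would exploit this by fixing a small $r>0$ such that $\Upsilon_{c_0}|_{\D(c_0,r)}$ is $K$-quasiconformal with $K$ close enough to $1$ for McMullen's exponent to satisfy $\beta(\delta,K)\geq \frac{\alpha}{d}-\epsilon$, and then modifying $\Upsilon_{c_0}$ outside $\D(c_0,r)$ by a standard quasiconformal surgery (straightening the Beltrami coefficient outside $\D(c_0,r/2)$ and solving the Beltrami equation) to produce a globally $K$-quasiconformal map $\widetilde\Upsilon$ that agrees with $\Upsilon_{c_0}$ on a neighborhood of $c_0$. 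Since $(1+\beta)$-conformality at $c_0$ depends only on the germ of the map at $c_0$, McMullen's theorem applied to $\widetilde\Upsilon$ yields the claim for $\Upsilon_{c_0}$. Letting $\epsilon\to 0$ concludes the proof.
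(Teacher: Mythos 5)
Your proposal follows essentially the same route the paper itself takes: the deep--point estimate (Theorem~\ref{theo:deep}, resp.\ estimate~(\ref{equ:deep}) for the set $Z$ carrying the dilatation), McMullen's Theorem~2.25 from~\cite{mcm}, and property (vi) of Fact~\ref{theo:basic} to push the relevant dilatation constant to $1$ so that the exponent $\beta(\delta,K)$ approaches $\delta=\frac{\alpha}{d}-\epsilon$. One small correction to your localization step: solving the Beltrami equation with the coefficient truncated outside $\D(c_0,r/2)$ yields a map that agrees with $\Upsilon_{c_0}$ near $c_0$ only up to postcomposition with a conformal map (by uniqueness of Beltrami solutions), which suffices because $(1+\beta)$-conformality at $c_0$ with $\beta\leq 1$ is preserved under such postcomposition.
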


Theorem~\ref{theo:deep} and Corollary~\ref{theo:conf} are generalizations of the results of \cite{rivera} 
obtained  for non-recurrent parameters (Misiurewicz case). Note that Misiurewicz set of parameters is of
harmonic measure $0$, see~\cite{grsw,sm}. The concept of measurable deep points was proposed  by C. McMullen in the context of renormalization~\cite{mcm}. 

The proof of Theorem~\ref{theo:deep} is based on a global inductive estimate of conformal densities distributed over elements of Yoccoz partitions. The main technical difficulty 
to overcome is a tendency of conformal measures to excessively concentrate  around recurrent critical points~\cite{grsm,rlporous}. 

A finite Borel measure $\nu$ supported on $\J_c$ 
is called {\em conformal with an exponent} $\kappa>0$
(or {\em $\kappa$-conformal}) if for every Borel set
$B$ on which $f$ is injective one has
\[ \nu(f(B))=\int_{B}|f_c'(z)|^{\kappa}~d\nu(z)~.\]
Of particular importance are conformal measures with  the minimal exponents,~\cite{sulcio, du}. In~\cite{grsm1}
it was proved that for a large class of rational maps, including Collet-Eckmann quadratic polynomials, conformal measures with the minimal exponent $\kappa$ are ergodic (hence unique), non-atomic, and 
$$\kappa=\HD(\J_c) =\HD(\nu):=\inf_{A:\nu(A)=1}\HD(A).$$

\subsection{Transversality function}
 M. Benedicks and L. Carleson  in their work on unimodal maps $z^2+c$, $c\in \RR$, and the H\'enon map, \cite{Beca,Becaa},  
used the transversality function 
\begin{equation}\label{equ:26mp,1}
  {\T}(c)=\sum_{n=0}^{\infty}\Bigl(D_zf_{c}^n(z)_{z=c}\Bigr)^{-1}
\end{equation}  
to control distortion between the phase and parameter spaces. It was observed in ~\cite{Beca, Becaa} that 
as long as ${\cal T}(c)\not = 0$ and ${\cal T}(c)\not=0$ and $|{\T}|(c)=\sum_{n=0}^{\infty} |D(f_{c}^n)(c)|^{-1} < \infty$ then the parameter exclusion  construction can be initiated. The outcome
of the construction is a set of parameters of positive Lebesgue measure with  an expanding dynamics. 
The work ~\cite{Beca, Becaa} generalized an earlier breakthrough due to  M. Jakobson on the existence of a set of parameters of 
positive $1$-dimensional Lebesgue measure with a stochastic dynamics.  The proof of M. Jakobson was based on very different
techniques than that of~\cite{Beca,Becaa}. 
\begin{theo}\label{theo:trans1}
The sum
 $$ {\T}(c_0)=\sum_{n=0}^{\infty}\Bigl(D_zf_{c_0}^n(z)_{z=c_0}\Bigr)^{-1}$$
converges for almost all $c_0\in \partial \M$ with respect to the
harmonic measure and  satisfies
$${\T}(c_0)=\frac{1}{D_z\Upsilon_{c_0}(z)_{|z=c_0}}\;,$$
where $\Upsilon_{c_0}(z)$ is the similarity function of Fact~\ref{theo:basic}.
\end{theo}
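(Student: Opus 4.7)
The plan is to establish the identity by combining a classical Douady--Hubbard transversality formula, valid for parameters $c$ in the exterior of $\M$, with Theorem~\ref{theo:continuity} to pass that formula to the boundary. First I would set up coordinates: let $w_0 = e^{2\pi i\theta_0}$ be the external angle at $c_0$, and parametrize the dynamical geodesic $\gamma$ by $z_r = \Psi_{c_0}(r w_0)$ and the parameter external ray by $c_r = \Psi(r w_0)$, $r > 1$. By the Douady--Hubbard relation $\Phi(c) = \Phi_c(c)$, both rays have angle $\theta_0$ and land at $c_0$; by Fact~\ref{theo:basic}(iv)--(v), $\gamma$ avoids $Z$, so $\Upsilon_{c_0}$ coincides with $\Psi\circ\Psi_{c_0}^{-1}$ on $\gamma$ and in particular $\Upsilon_{c_0}(z_r) = c_r$. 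For the convergence of $\mathcal{T}(c_0)$, I would use that at a typical $c_0$ the critical orbit is generic for $\omega_{c_0}$, the measure of maximal entropy, whose Lyapunov exponent equals $\log d$ (Brolin); a large-deviation estimate applied to $\log|f_{c_0}'|$ then gives $|Df_{c_0}^n(c_0)|\ge d^{(1-\epsilon)n}$ eventually, so the series for $\mathcal{T}(c_0)$ converges absolutely.

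Next I would derive the Douady--Hubbard identity in the exterior of $\M$. Differentiating the Böttcher functional equation $\Phi_c(f_c^n(c)) = \Phi(c)^{d^n}$ in $c$ gives
\[
\Phi_c'(f_c^n(c))\cdot \tfrac{d}{dc}f_c^n(c) + (\partial_c\Phi_c)(f_c^n(c)) = d^n\Phi(c)^{d^n - 1}\Phi'(c).
\]
Using the inductive formula $\frac{d}{dc}f_c^n(c) = Df_c^n(c)\cdot T_n(c)$ with $T_n(c) = \sum_{k=0}^n (Df_c^k(c))^{-1}$, together with the dynamical chain rule $\Phi_c'(f_c^n(c)) = \Phi_c'(c)\, d^n \Phi(c)^{d^n - 1}/Df_c^n(c)$, the factors $d^n\Phi(c)^{d^n - 1}$ cancel, and the remainder $(\partial_c\Phi_c)(f_c^n(c))/(d^n\Phi(c)^{d^n - 1})$ vanishes as $n\to\infty$ since $f_c^n(c)\to\infty$, where $\Phi_c$ is asymptotic to the identity uniformly in $c$. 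Passing to the limit yields the classical identity
\[
\mathcal{T}(c) = \Psi_c'(w)/\Psi'(w),\qquad w = \Phi(c),\quad c\in\C\setminus\M.
\]

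Now I would apply Theorem~\ref{theo:continuity}. Since $\Upsilon_{c_0} = \Psi\circ\Psi_{c_0}^{-1}$ on $\gamma$, we have $D_z\Upsilon_{c_0}(z_r) = \Psi'(rw_0)/\Psi_{c_0}'(rw_0)$, and Theorem~\ref{theo:continuity} guarantees $D_z\Upsilon_{c_0}(z_r)\to D_z\Upsilon_{c_0}(c_0)$ as $r\to 1^+$. Substituting $c=c_r$, $w=rw_0$ into the transversality identity and multiplying,
\[
D_z\Upsilon_{c_0}(z_r)\cdot\mathcal{T}(c_r) = \frac{\Psi_{c_r}'(rw_0)}{\Psi_{c_0}'(rw_0)}.
\]
The proof then reduces to showing that, as $r\to 1^+$, both $\mathcal{T}(c_r)\to\mathcal{T}(c_0)$ and the right-hand ratio tends to $1$.

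The hard part will be the second limit, because $c_r$ moves with $r$ while the Böttcher derivatives are evaluated at a common point $rw_0$ approaching $\partial\D$, so neither numerator nor denominator stabilizes on its own. My plan is to pull back through the Böttcher equation to a fixed moderate scale: for every $n$,
\[
\frac{\Psi_{c_r}'(rw_0)}{\Psi_{c_0}'(rw_0)} = \frac{\Psi_{c_r}'((rw_0)^{d^n})}{\Psi_{c_0}'((rw_0)^{d^n})}\cdot\frac{Df_{c_0}^n(z_r)}{Df_{c_r}^n(c_r)},
\]
and to choose $n = n(r)$ so that $r^{d^n}$ lies in a bounded interval in $(1,\infty)$. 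The first factor tends to $1$ by Carathéodory convergence $\Psi_c\to\Psi_{c_0}$ uniform on compact subsets of $\{|w|>1+\delta\}$; the second compares two escaping orbits $\{f_{c_r}^k(c_r)\}$ and $\{f_{c_0}^k(z_r)\}$, both of which equal $\Psi_{\bullet}((rw_0)^{d^k})$, and their scale-by-scale closeness follows from the holomorphic dependence of $\Psi_c$ on $c$ in the exterior of $\M$. An analogous scale-by-scale comparison, together with a tail bound using the rapid escape of $(rw_0)^{d^k}$ to infinity, delivers $\mathcal{T}(c_r)\to\mathcal{T}(c_0)$. Combining, $D_z\Upsilon_{c_0}(c_0)\cdot\mathcal{T}(c_0) = 1$, which is the desired formula; compared with Rivera-Letelier's argument in the Misiurewicz case, Theorem~\ref{theo:continuity} plays the role previously filled by uniform hyperbolicity near a non-recurrent critical orbit.
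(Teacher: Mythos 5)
Your architecture is the same as the paper's: your exterior identity $\T(c)=\Psi_c'(w)/\Psi'(w)$, $w=\Phi(c)$, is exactly Fact~\ref{fa:1kp,1}; the use of Theorem~\ref{theo:continuity} to pass from $D_z\Upsilon_{c_0}$ at points of the geodesic to $D_z\Upsilon_{c_0}(c_0)$ is the first link in the paper's chain (\ref{equ:17ja,1}); and your two remaining limits, $\Psi_{c_r}'(rw_0)/\Psi_{c_0}'(rw_0)\to 1$ and $\T(c_r)\to\T(c_0)$, are precisely Proposition~\ref{prop:3kp,1} and the final dominated-convergence step. So the issue is not the route but whether your justification of those two limits stands, and as written it does not.

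For the ratio of B\"ottcher derivatives, after pulling back to a bounded scale you still must control $\log\bigl(Df_{c_0}^{n(r)}(z_r)\bigr)-\log\bigl(Df_{c_r}^{n(r)}(c_r)\bigr)$, a sum over $n(r)\to\infty$ iterates during which both orbits travel close to ${\cal J}$ and can come near the critical point; ``holomorphic dependence of $\Psi_c$ on $c$'' gives no uniform control there, because the region where $\Psi_c(w)$ depends tamely on $c$ degenerates as $w$ approaches the unit circle, and small absolute displacements of orbit points near $0$ produce relative errors that need not be summable. The paper makes this sum manageable by quantitative input you never invoke: Lemma~\ref{lem:4ka,1} converts the parameter increment into a factor $\bigl|Df_{c(u)}^{S_n}(z_n(u))\bigr|^{-1}$ using Fact~\ref{fa:1kp,1} together with boundedness of $|\log\T|$ along almost every external ray, and the resulting nonlinearity sum collapses to $\sum_k |Df^k|^{-1}$, which is uniformly bounded by the Collet--Eckmann expansion along almost every ray (Theorem~1.2 of~\cite{grsw}). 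The same uniform expansion on the closed ray is what furnishes the domination needed for $\T(c_r)\to\T(c_0)$: your ``rapid escape of $(rw_0)^{d^k}$'' only controls iterates after the orbit leaves a neighborhood of ${\cal J}$, and that take-off time tends to infinity as $r\to 1^+$, so without the ray-wise expansion there is no tail bound uniform in $r$. Finally, your convergence argument for $\T(c_0)$ itself (large deviations for an orbit generic for the maximal-entropy measure) is not sound, since $\log|f_{c_0}'|$ is unbounded near $0$ and the critical orbit is recurrent, so weak-$*$ genericity does not yield the Birkhoff limit for this observable; the statement you actually need is the almost-everywhere Collet--Eckmann property of~\cite{grsw,sm} (or the Lyapunov exponent identity of~\cite{gniotek}), which the paper cites rather than rederives.
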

For Misiurewicz parameters, Theorem~\ref{theo:trans1} was proven by J. Rivera-Letelier in ~\cite{rivera}. The proof 
in~\cite{rivera} is based on transversality of two different holomorphic motions, the critical value $f_c(c)$ and the postcritical hyperbolic 
compact ${\cal P}(c)$ for $c$ from a small neighborhood of $c_0$. Our proof is different as dynamics generic with respect to the harmonic measure does not
have an underlying  hyperbolic structure. The main idea is to produce  uniform estimates for $\T(c)$ outside of $\M$ at some scales 
and then pass to the limit  along the hyperbolic geodesic landing at at $c_0$. The main technical ingredient is 
$C^1$-smoothness of the similarity map  $\Upsilon_{c_0}(z)$ along hyperbolic geodesics as stated in Theorem~\ref{theo:continuity}.

Fact~\ref{fa:1kp,1} states that for $c\not \in \M$, ${\cal T}(c)=0$ iff $D_c\Psi(c)=0$. Since $\Psi(c)$ is univalent, ${\cal T}(c)$ can not vanish. This, however, is not a new result, since it follows from a somewhat more general theorem of~\cite{lev}.

The transversality condition is closely related to the summability conditions in complex dynamics, 
$|{\T}|^{\beta}(c)=\sum_{n=0}^{\infty}|D(f_{c}^n)(c)|^{-\beta} <+\infty$,
$\beta\in (0,1]$, which imply various degrees of metrical or conformal smallness of  ${\cal J}_c$ ~\cite{grsm1, brstr, rish, lev}.

\paragraph{Geometric interpretation of the transversality function.} For a typical $c_0\in \partial \M$ with respect to the harmonic measure, the similarity function 
$\Upsilon_{c_0}$ maps a hyperbolic geodesic $\gamma$ of $\C\setminus {\cal J}_{c_0}$ landing
at $c_0$ onto a hyperbolic geodesic $\Gamma$  of $\C\setminus \M$ landing at $c_0=\Upsilon_{c_0}(c_0)$, see Fact~\ref{theo:basic} (\rm{v}). Let $\gamma(z)$ denote the subarc of $\gamma$ between $z\in \gamma$ and $c_0$ and $|\gamma(z)|$ be the length of $\gamma(z)$.

Theorem~\ref{theo:trans1} 
and $C^1$-smoothness of $\Upsilon_{c_0}$ along $\gamma$ yield the following corollary.
\begin{coro}\label{geo}
For almost all $c_0\in \partial \M$ with the respect to the harmonic measure,
\begin{equation}\label{equ:modul}
|{\cal T}(c_0)|=\lim_{c\in \Gamma \rightarrow c_0}\frac{|\gamma(\Upsilon_{c_0}^{-1}(c))|}{|\Gamma(c)|}\;
\end{equation}
\end{coro}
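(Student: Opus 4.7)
The plan is to combine the three structural inputs: (a) Fact~\ref{theo:basic}(v), which identifies $\Upsilon_{c_0}$ with the \emph{holomorphic} map $\Psi\circ\Psi_{c_0}^{-1}$ on $U\setminus Z$; (b) Fact~\ref{theo:basic}(iv), which says the geodesic $\gamma$ is disjoint from the exceptional compact $Z$; and (c) Theorem~\ref{theo:continuity}, giving continuity of $D_z\Upsilon_{c_0}$ along $\gamma$ up to $c_0$. Together these say that $\Upsilon_{c_0}$ is honestly holomorphic in a one-sided neighborhood of the tail of $\gamma$, with a continuous nonzero derivative up to the landing point, so the length element transforms by $|D_z\Upsilon_{c_0}|$ exactly as in the conformal case.

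First I would fix $c\in\Gamma$ sufficiently close to $c_0$ and set $\zeta:=\Upsilon_{c_0}^{-1}(c)\in\gamma$. Since $\Upsilon_{c_0}$ is a homeomorphism of $\C$ mapping $U$ onto $\Upsilon_{c_0}(U)\ni c_0$, for $c$ close enough to $c_0$ we have $\zeta\in U$ and the subarc $\gamma(\zeta)$ lies in $U\cap \gamma\subset U\setminus Z$. On this subarc $\Upsilon_{c_0}$ coincides with the univalent map $\Psi\circ\Psi_{c_0}^{-1}$ by Fact~\ref{theo:basic}(v), and $\Upsilon_{c_0}(\gamma(\zeta))=\Gamma(c)$ by Fact~\ref{theo:basic}(v) again together with the fact that both $\gamma$ and $\Gamma$ are the unique hyperbolic geodesics landing at $c_0$ on their respective sides. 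The length transformation rule for holomorphic maps then gives
\begin{equation*}
|\Gamma(c)|=\int_{\gamma(\zeta)}|D_w\Upsilon_{c_0}(w)|\,|dw|.
\end{equation*}

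Next I would use Theorem~\ref{theo:continuity}: the map $w\mapsto D_w\Upsilon_{c_0}(w)$ is continuous along $\gamma$ at $c_0$, so as $c\to c_0$ (equivalently $\zeta\to c_0$ along $\gamma$) the values of $|D_w\Upsilon_{c_0}|$ on $\gamma(\zeta)$ converge uniformly to $|D_z\Upsilon_{c_0}(c_0)|$. Dividing the length identity above by $|\gamma(\zeta)|$ and passing to the limit yields
\begin{equation*}
\lim_{c\in\Gamma\to c_0}\frac{|\Gamma(c)|}{|\gamma(\zeta)|}=|D_z\Upsilon_{c_0}(c_0)|,
\end{equation*}
and Theorem~\ref{theo:trans1} identifies the right-hand side with $1/|{\cal T}(c_0)|$. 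Reciprocating proves~(\ref{equ:modul}).

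There is essentially no hard step: the argument is a packaging of the two main theorems with the geometric information of Fact~\ref{theo:basic}. The only point that needs genuine care is the first paragraph's verification that, for $c$ close to $c_0$, the subarcs $\gamma(\zeta)$ and $\Gamma(c)$ correspond under $\Upsilon_{c_0}$ \emph{and} lie in the holomorphicity domain $U\setminus Z$; this uses the landing of $\gamma$ (so that its tail stays in any prescribed neighborhood of $c_0$) together with the disjointness $\gamma\cap Z=\emptyset$ from Fact~\ref{theo:basic}(iv). Once this is in place, continuity of the derivative supplies the uniform control needed to pull the limit through the integral.
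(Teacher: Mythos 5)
Your argument is correct and is essentially the proof the paper intends: the corollary is deduced exactly from Fact~\ref{theo:basic}(iv)--(v) (so that $\Upsilon_{c_0}$ is the conformal map $\Psi\circ\Psi_{c_0}^{-1}$ on the tail of $\gamma$ and sends $\gamma$ onto $\Gamma$), the $C^1$-continuity of $D_z\Upsilon_{c_0}$ along $\gamma$ from Theorem~\ref{theo:continuity}, and the identification $D_z\Upsilon_{c_0}(c_0)=1/{\cal T}(c_0)$ from Theorem~\ref{theo:trans1}. Your write-up merely makes explicit the length-integral and uniform-convergence step that the paper leaves implicit, so no further comment is needed.
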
 

A formula similar to (\ref{equ:modul}) holds for $\arg {\cal T}(c_0)$ but its dynamical meaning seems to be less clear
within Benedicks-Carleson perturbation theory. Also, according to~\cite{fine}, the limit $\lim_{c\in \Gamma \rightarrow c_0} \arg (\Gamma(c))-c_0)$ does not exist for almost all $c_0\in \partial {\cal M}_d$ with respect to the harmonic measure as $\Gamma$ twists around $c_0$ in both directions infinitely many times. In~\cite{sm}, it was proved that the Collet-Eckmann condition holds for all $c\in \partial \M$ except possibly for  a set of harmonic Hausdorff dimension $0$. One can ask if the formula $(\ref{equ:modul})$ holds for all Collet-Eckmann parameters or even for the summability class $|{\T}|^{1}(c)<\infty$.




\subsection{Geometric applications}
\paragraph{Flat angles.} 
Let $\K=\partial K$ be a continuum. 
 $\K$ is well-accessible at $y\in \K$ (or accessible within a twisted angle)  if there exist a Jordan curve 
$\gamma\subset\C\setminus K$ terminating at $y$ and $C>0$ such that for every $z\in \gamma$,
$$\dist(z,{\cal K})> C\;\diam \gamma(z),$$
where $\gamma(z)$ is the subarc of $\gamma$ between $z$ and $y$. 
If every point from $\K$ is 
accessible within a twisted angle of the same aperture then $\C\setminus \K$ is a John domain.
If $y$ is well-accessible then it is also well-accessible by the hyperbolic geodesic landing at $y$ \cite{pom}. 
Theorem~3 of~\cite{fine} states that
for almost every $c\in \partial {\cal M}_{d}$ with respect to the
harmonic measure $\omega$, the parameter $c$ is a Lebesgue density point
of $\C\setminus {\cal M}_{d}$ but it is not well-accessible.

We say that a point $c^*\in \partial \M$ is iterated $\log$-accessible if a hyperbolic geodesic $\Gamma$ lands at $c^*$ and for any $m>0$,

$$\lim_{ \Gamma\ni c \rightarrow c^*}
\frac{\dist(c,{\cal M}_d)}{\diam{\Gamma}(c)} \log_{[m]} \frac{1}{\diam{\Gamma}(c)} 
=+\infty,$$
where
$\log_{[m]}=\log\circ\dots\circ \log$ is the $m$-th iterate of $\log$ function.

\begin{theo}\label{theo:flat}
For almost every $c^*\in \partial {\cal M}_{d}$ with respect to the
harmonic measure,
$c^*$ is iterated $\log$-accessible.
\end{theo}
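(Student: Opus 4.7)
My plan is to transfer the question from the parameter space to the phase space via the similarity map $\Upsilon_{c_0}$, and then to control the geometry of the hyperbolic geodesic using the radial behavior of the Riemann map $\Psi_{c_0}$ at typical external angles.

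\textbf{Transfer.} Combining Theorem~\ref{theo:continuity} with Fact~\ref{theo:basic}(v)--(vii), $\Upsilon_{c_0}$ is $C^1$-conformal along $\gamma$ with $\Upsilon_{c_0}'(c_0)\ne 0$ and $\Upsilon_{c_0}(\gamma)=\Gamma$. For $c=\Upsilon_{c_0}(\xi)\in\Gamma$ near $c_0$, this gives $\diam\Gamma(c)=(1+o(1))|\Upsilon_{c_0}'(c_0)|\,\diam\gamma(\xi)$, and the inclusion $\M\cap\Upsilon_{c_0}(U)\subset\Upsilon_{c_0}(Z\cap U)$ from~(iv) gives $\dist(c,\M)\geq(1+o(1))|\Upsilon_{c_0}'(c_0)|\,\dist(\xi,Z)$; the limit in~(iv) further yields $\dist(\xi,Z)=(1+o(1))\dist(\xi,\J_{c_0})$. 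Thus Theorem~\ref{theo:flat} reduces to the phase-space analogue
$$\lim_{\gamma\ni\xi\to c_0}\frac{\dist(\xi,\J_{c_0})}{\diam\gamma(\xi)}\,\log_{[m]}\frac{1}{\diam\gamma(\xi)}=+\infty.$$

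\textbf{Radial parametrization.} Write $\xi_s=\Psi_{c_0}(se^{i\theta_0})$, $s\searrow 1$, where $\theta_0$ is the external angle of $c_0$ (Lebesgue-typical, since $c_0$ is $\omega$-typical). Koebe's distortion theorem gives $\dist(\xi_s,\J_{c_0})\asymp(s-1)|\Psi_{c_0}'(se^{i\theta_0})|$, while $\diam\gamma(\xi_s)$ is bounded above by the length $\int_1^s|\Psi_{c_0}'(te^{i\theta_0})|\,dt$ of the radial image. Hence the ratio we must bound from below satisfies
$$\frac{\dist(\xi_s,\J_{c_0})}{\diam\gamma(\xi_s)}\gtrsim \frac{(s-1)|\Psi_{c_0}'(se^{i\theta_0})|}{\int_1^s|\Psi_{c_0}'(te^{i\theta_0})|\,dt},$$
and the task becomes showing that the reciprocal of the right-hand side grows no faster than $\log_{[m]}(1/(s-1))$ for each $m\geq 2$.

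\textbf{Oscillation LIL and main obstacle.} This reduces to a uniform law of iterated logarithm for $\log|\Psi_{c_0}'|$ along the radial segment: for $\omega$-almost every $c_0$, the oscillation $\sup_{1<t\leq s}\bigl|\log|\Psi_{c_0}'(te^{i\theta_0})|-\log|\Psi_{c_0}'(se^{i\theta_0})|\bigr|$ should be controlled by an iterated logarithm in $1/(s-1)$. Since any such iterated-logarithm bound is dominated by $\log_{[m]}(1/(s-1))$ for every $m\geq 2$, inserting it into Step 2 and transporting back via Step 1 yields Theorem~\ref{theo:flat}. The main obstacle is passing from the classical \emph{pointwise} Makarov LIL to this \emph{oscillation} statement along the radial line, specialized to the Julia-set Riemann map $\Psi_{c_0}$; the plan is to exploit the Markov system of Yoccoz partitions and the probabilistic model of~\cite{grsw} mentioned in the introduction, which identifies $\log|\Psi_{c_0}'|$ along the radial segment with a stationary process admitting iterated-logarithm moment bounds through Borel--Cantelli, from which the required supremum estimate follows.
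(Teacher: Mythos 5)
Your transfer step (Theorem~\ref{theo:continuity} plus Fact~\ref{theo:basic}\,(iv)--(vii) to reduce the claim to its phase-space analogue along $\gamma$) is sound and matches the first ingredient the paper names; note, however, that the paper does not prove Theorem~\ref{theo:flat} in the text at all: it states that the theorem follows from the similarity structures together with an \emph{iterated large deviation estimate for the exponential distribution}, with the detailed proof deferred to~\cite{grasw}. The genuine gap is in your phase-space step. Your Step 2 only yields
\[ \frac{\dist(\xi_s,\J_{c_0})}{\diam\gamma(\xi_s)} \gtrsim e^{-A(s)},\qquad A(s):=\sup_{1<t\le s}\bigl|\log|\Psi_{c_0}'(te^{i\theta_0})|-\log|\Psi_{c_0}'(se^{i\theta_0})|\bigr| , \]
so to conclude you would need $e^{-A(s)}\,\log_{[m]}\bigl(1/\diam\gamma(\xi_s)\bigr)\to\infty$ for \emph{every} $m$, i.e.\ $A(s)=o\bigl(\log_{[m]}\frac{1}{s-1}\bigr)$ for every $m$: the oscillation itself, not its exponential, must be beaten by every iterated logarithm. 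Your proposed bound ``$A(s)=O(\mbox{iterated log})$'' is therefore insufficient — with $A(s)\asymp\log\log\frac{1}{s-1}$ you only get a ratio $\gtrsim(\log\frac{1}{s-1})^{-C}$, whose product with $\log_{[m]}$ tends to $0$ already for $m=2$ (you appear to conflate $A$ with $e^{A}$) — and, in the strength actually required, it is unavailable and should be false: Makarov's LIL~\cite{mak} only gives radial oscillations of order $\sqrt{\log\frac{1}{s-1}\log\log\log\frac{1}{s-1}}$, fluctuations of that order are exactly what one expects for these fractal Julia sets, and the paper itself stresses LIL-scale behaviour of the rays (the limit of $\arg(\Gamma(c)-c_0)$ fails to exist, by~\cite{fine}). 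So the key lemma you reduce to cannot be the route.

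The mechanism that makes the theorem true is different and is precisely what is deferred to~\cite{grasw}: in $\diam\gamma(\xi_s)\le\int_1^s|\Psi_{c_0}'(te^{i\theta_0})|\,dt$ the scales $t-1\approx 2^{-k}(s-1)$ enter with exponentially small weight, so only excursions of $\log|\Psi_{c_0}'|$ exceeding the linear cost $\sim k\log 2$ can degrade the ratio; such excursions have exponential-type tails (the ``exponential distribution'' in the paper's phrase), and making the estimate uniform over all scales and valid for every $m$ requires the iterated large-deviation refinement. A single-scale tail bound fed into Borel--Cantelli — which is all your final step invokes — cannot give this: with exponential tails and threshold $\log_{[m-1]}(n)$ at scale $n$ the probabilities are not even summable for $m\ge 3$, so the strong dependence across scales must be exploited. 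That probabilistic core is missing from your proposal, and the oscillation statement you substitute for it would not suffice even if it held.
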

Theorem~\ref{theo:flat} follows from the existence of the similarity structures and  an  iterated  large deviation estimate for exponential distribution, see~\cite{grasw} for a detailed proof. 
\paragraph{Hedgehogs and porosity in the parameter space.}
 The concept  of porosity has a long history, see~\cite{mat2}. A set $E\subset \C$
is $\beta$-{\em porous}, $\beta>0$, at $z^*\in E$ and scale
$r>0$ if there is $z\in \D(z^*,r)$ such that $\D(z,\beta r) \cap  E = \emptyset$. 

By the Makarov law of the iterated logarithm~\cite{mak}, almost every point from $\partial \M$ with respect to the harmonic measure  is H\"older accessible, and thus $\beta$-porous, $\beta\in (0,1/2)$ in many scales, see Proposition~2.2 in ~\cite{gsarkiv}. 
The limiting value of $\beta=\frac{1}{2}$ from~\cite{gsarkiv} falls short of the upper bound $1$. It is  not know what happens for $\beta$ between $1/2$ and $1$.

The harmonic measure is supported on a set of points of $\partial \M$ that can only be accessed by passing through infinitely many increasingly narrow "tunnels" at scales of positive density.
The prelevance of such extremal sets in complex dynamics was shown in ~\cite{gsarkiv}. Using the similarity structures from Fact~\ref{theo:basic}, one can quantify the lack of porosity and prove that for a  typical $c^*\in \partial \M$ with respect to the harmonic measure, accessibility within a John angle fails rather badly and 
an extremal ``non-accesibility'' in the sense of  Makarov theory~\cite{mak} is observed instead~\cite{grasw}.

\begin{figure}[tp] \label{fig:18jp,1}
\hspace{3cm}
\includegraphics[width=10cm]{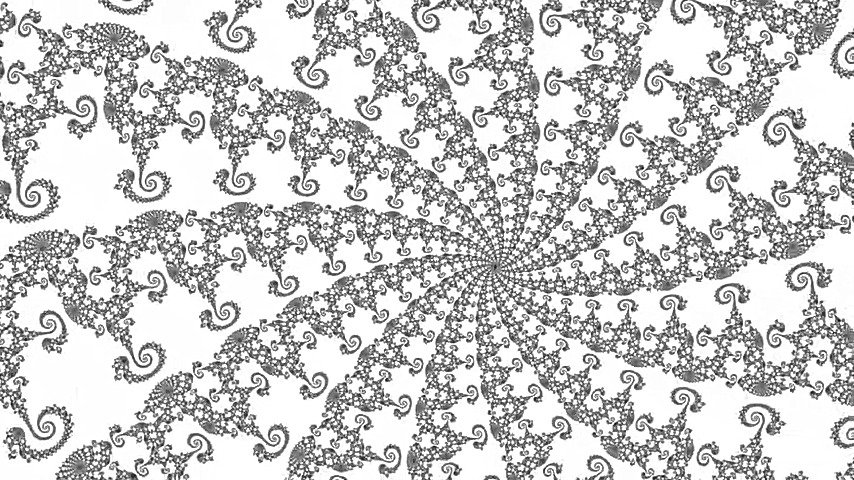}  
 \caption{\small {Hedghog layer at a typical point in the boundary of the Mandelbrot set}}
\end{figure}

We will illustrate some of these extremal  features of the harmonic measure distribution on $\partial \M$, see Figure~\ref{fig:18jp,1}. To this aim we will need a concept of hedgehog neigborhoods.
 
Let $X$ be a planar set. We say that $X$ contains {\em $(m,\epsilon)$-hedgehog layer} around  $x\in X$ if there exist a ring domain $A$,\hspace{-2,5mm} $\mod A\geq m$, and a collection of pairwise disjoint continua ${\cal C}_{k}\subset X$, $k=1,2 \dots$, with the property that
$\rm {(i)}$ $x$ belongs to the bounded component of $\C\setminus \overline{A}$, $\rm {(ii)}$ 
every  ${\cal C}_k$ intersects both components of $\C\setminus \overline{A}$, $\rm {(iii)}$ every point from $A$ is at the distance at most $\epsilon/\diam A$ to some ${\cal C}_k$ from the collection. 

Even though not explicitely stated, the concept  of hedgehog layers was introduced by J. Riviera-Letelier in his study of porosity at critical recurrent points for rational functions, see the proof of Theorem~C' in~(\cite{rlporous}). 


 We say that $X$ has {\em hedgehog neighborhood} at $x$ if for every $\epsilon, m >0$ there exists an $(m,\epsilon)$-hedgehog layer around $x\in X$. The phase-parameter space similarity of Fact~\ref{theo:basic} allows to detect  hedgehog neighborhoods in the parameter space.
\begin{theo}\label{theo:hedgehog}
The boundary $\partial \M$ contains  hedgehog neighborhood at almost every point  $c^*\in \partial \M$ with respect to the harmonic measure. The corresponding Julia set ${\cal J}_{c^*}$ has hedgehog neighborhoods at a dense subset of
${\cal J}_{c^*}$.
\end{theo}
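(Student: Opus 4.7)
The plan is to construct hedgehog layers first in the dynamical plane ${\cal J}_{c^*}$ and then transport them to the parameter plane via the similarity map $\Upsilon_{c^*}$ of Fact~\ref{theo:basic}. The essential structural observation I want to use is that, combining (iv) and (v), one obtains the equality $\Upsilon_{c^*}(Z\cap U)=\M\cap\Upsilon_{c^*}(U)$ on a neighborhood of $c^*$: the inclusion $\supset$ is (iv), while (v) forces $\Upsilon_{c^*}(U\setminus Z)\subset\CC\setminus\M$, giving the reverse inclusion. Together with (i)--(iii) and the dendritic character of ${\cal J}_{c^*}$, this means $\Upsilon_{c^*}$ sends $\partial Z$ homeomorphically onto $\partial\M$ locally, so that continua lying in ${\cal J}_{c^*}\subset\partial Z$ near $c^*$ are mapped into $\partial\M$.

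For the dynamical-plane construction I would follow the scheme behind the proof of Theorem~C' in~\cite{rlporous}. For generic $c^*\in\partial\M$ with respect to $\omega$ the critical orbit is recurrent, so there exist deep returns $f_{c^*}^{n_k}(0)\to 0$. Fix a short seed branch $S$ of ${\cal J}_{c^*}$ crossing a reference annulus around $0$. Each deep return supplies, via the inverse branch $\phi_k$ of $f_{c^*}^{n_k}$ extended univalently on a Koebe neighborhood coming from a Yoccoz puzzle piece, a small-scale copy $\phi_k(S)\subset{\cal J}_{c^*}$ clustering at $0$. Selecting returns at a suitable sequence of scales and angular positions and exploiting the $d$-fold symmetry of $f_{c^*}$ at $0$, one packs an arbitrarily thick annulus with $\epsilon$-dense pulled-back continua, producing an $(m,\epsilon)$-hedgehog layer of ${\cal J}_{c^*}$ at $0$. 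The branched symmetry of $f_{c^*}$ at $0$ then carries this layer to a hedgehog layer at $c^*=f_{c^*}(0)$, and iterated univalent inverse branches $f_{c^*}^{-n}$ transport it to every preimage of $0$; since these preimages are dense in ${\cal J}_{c^*}$, the second assertion of the theorem follows.

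For the first assertion, apply $\Upsilon_{c^*}$ to the hedgehog layer of ${\cal J}_{c^*}$ at $c^*$. The ring domain $A$ maps to a ring domain whose modulus differs from $m$ by at most a factor depending on the maximal dilation on $\D(c^*,\diam A)$, and this factor tends to $1$ as $\diam A\to 0$ by (vi); conformality at $c^*$ from (vii) turns this into an asymptotic equality. Each ${\cal C}_k\subset{\cal J}_{c^*}\subset\partial Z$ is sent, by the structural remark of the first paragraph, to a continuum $\Upsilon_{c^*}({\cal C}_k)\subset\partial\M$ crossing $\Upsilon_{c^*}(A)$. The $\epsilon$-density in $A$ is distorted by $\Upsilon_{c^*}$ only by a factor $1+o(1)$ as the scale shrinks, so any prescribed $(m,\epsilon)$ is realized as a genuine $(m,\epsilon)$-hedgehog layer of $\partial\M$ at $c^*$ on a sufficiently small scale.

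The hardest part of this plan is the density step in the second paragraph: showing that the pulled-back copies $\phi_k(S)$ truly fill an annulus of modulus at least $m$ at density $\epsilon/\diam A$. This amounts to a combinatorial-geometric statement that deep returns of $0$ occur at enough different scales and angular sectors, coupled with uniform Koebe distortion along each $\phi_k$; both rely on the Markov/Yoccoz puzzle structure mentioned in the introduction and elaborated in~\cite{grasw}. A subsidiary point is verifying that continua in ${\cal J}_{c^*}$ lie on $\partial Z$ rather than in its interior, so that their $\Upsilon_{c^*}$-images land on $\partial\M$; this follows from (i)--(iii) combined with the empty interior of the dendrite ${\cal J}_{c^*}$, but a brief argument is warranted.
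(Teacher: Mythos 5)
Your overall architecture (build hedgehog layers in the dynamical plane, then transport them to $\partial\M$ by the similarity map of Fact~\ref{theo:basic}) is the same as the paper's, but the step you yourself single out as the hardest is exactly where the paper's proof has its one real mechanism, and your substitute does not supply it. The paper does not select individual deep returns and pull back by univalent inverse branches: it uses the induced box mappings of Fact~\ref{fact:28ha,1} from~\cite{fine}, whose principal-nest moduli $m_p(c^*)$ are unbounded for typical $c^*$, and composes $k$ consecutive box maps into a \emph{proper map of degree $d^k$}, $\Phi_{k,p}=\psi_p\circ\cdots\circ\psi_{p+k-1}$. Choosing $p$ with $\mod\left(B_{p-1}\setminus\overline{B_{p}}\right)\geq 10\,m\,d^k$ and $1/d^k<\epsilon/10$, the full preimage $A=\Phi_{k,p}^{-1}(B_{p-1}\setminus\overline{B_p})$ is a ring of modulus $\geq 10m$, and the full $d^k$-fold preimage of a single traversing continuum ${\cal C}_p\subset{\cal J}_{c^*}$ is automatically spread over all angular sectors (each $\psi_p$ being $z^d$ composed with a univalent map of vanishing distortion), giving the $\epsilon$-net for free. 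In your scheme one univalent branch produces one tiny copy, the $d$-fold symmetry at $0$ only multiplies the count by $d$, and you have no argument that returns occur in enough angular positions at comparable scales, nor do you identify the source of arbitrarily thick annuli traversed by Julia continua (this is precisely the unbounded moduli of the nest). So the density step is a genuine missing idea, not a verification to be outsourced to~\cite{grasw}.

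The transport step also contains an error in your first paragraph. Item (iv) of Fact~\ref{theo:basic} gives $\Upsilon_{c^*}(Z\cap U)\supset\M\cap\Upsilon_{c^*}(U)$, and the consequence of (v) that $\Upsilon_{c^*}(U\setminus Z)\subset\CC\setminus\M$ yields \emph{the same} inclusion again, not the reverse one; nothing forbids $\Upsilon_{c^*}(Z\cap U)$ from spilling into $\CC\setminus\M$, and in fact $Z$ is an open fattening of ${\cal J}_{c^*}$ built from puzzle pieces whose images meet the complement of $\M$. For the same reason ${\cal J}_{c^*}\cap U\subset Z$ but certainly not ${\cal J}_{c^*}\subset\partial Z$, since parts of ${\cal J}_{c^*}$ lie in the interior of those pieces; hence your claim that $\Upsilon_{c^*}$ carries $\partial Z$ (and with it the continua of ${\cal J}_{c^*}$) homeomorphically into $\partial\M$ has no basis as stated. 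What does survive is the quasi-invariance of moduli and of relative density (your use of (vi)--(vii) is fine, and this is how the paper phrases the transfer, as quasiconformal invariance of hedgehog layers via Fact~\ref{theo:basic}); what still needs an argument is precisely the point you tried to secure, namely that the transported continua can be taken inside $\partial\M$, and your derivation of it would fail. The final part of your plan, pulling the layer back by univalent branches of $f_{c^*}^{-n}$ to the dense set of preimages of $0$, is fine because the layers exist at arbitrarily small scales.
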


Hedgehog neighborhoods are directly  related to the concept of "hairiness" proposed by J. Milnor in the context of renormalization. 
Theorem~\ref{theo:hedgehog} indicates that increasingly dense parts of the boundary of the  Mandelbrot set is a standard  feature of recurrent and non-linear dynamics rather than a staple of the renomalization.

The proof of Theorem~\ref{theo:hedgehog} explains how the construction of hedgehog neighborhoods in the phase space falls naturally into the setting of box mappings~\cite{hipek} in the unicritical case. Since hedgehog neighborhoods are quasionformal invariants, their abundance in the boundary of the Mandelbrot set follows directly from  Fact~\ref{theo:basic}. 

\section{Constructions}
\subsection{Preliminaries and the similarity map.}\label{sec:25hp,1}
We will follow closely the definitions and notations of~\cite{fine}.
Here is a partial list.
\begin{itemize}
\item $f_c(z) = z^d + c$, where $d>1$ is fixed, ${\cal J}_c$ is its
  Julia set, ${\cal K}_c$ the filled-in Julia set. 
\item
 ${\cal M}_d$ is the locus of connectivity of the family
  $\{f_c\}_{c\in\CC}$.
\item
$\Psi$ is the Riemann map from the complement of $\overline{\D(0,1)}$
  onto the complement of ${\cal M}_d$ tangent to the dentity at $\infty$; analogously, $\Psi_c$ is the
  Riemann map of the complement of ${\cal K}_c$ if $c\in {\cal M}_d$,
  otherwise $\Psi_c$ can be defined as the B\"{o}ttker coordinate on a
  neighborhood of $\infty$ and extended by the dynamics till the Green line $G_c(0)$, 
  $$G_c(z)=\lim_{n\rightarrow \infty}\frac{\log f_c^n(z)}{d^n}\;.$$
There is an explicite formula, $\Psi^{-1}_c(z)=\exp(G_c(z)+2\pi i \theta)$,
$G_c(z)>G_c(0)$, where $\theta \in [0,1)$ is called {\em external argument} 
or {\em external angle} of $z$~\cite{caga,etiuda}
  \end{itemize}
\paragraph{Rays, geodesics, and external angles.}
When $c\notin {\K}_{c}$ then the Green function $G_c$ has critical points at 
$f_c^{-i}(0)$ for $i = 0, 1,\cdots $. A smooth ray in the phase space is a gradient line of the $G_c$ with closure that intersects both $\infty$  and ${\K}_c$. We will consider only  gradient lines which avoid critical points of $G_c$ and are, therefore, smooth.  The closure of some rays  intersects ${\K}_c$ at precisely one point. We say that these rays land at (or converge to)  that point. All gradient lines are well defined on the set $\{z : G_c(z) > G_c(0)\}$.  They are labeled by the external angles $\theta\in [0,1)$ at which they enter $\infty$. 
If ${\K}_c$ is connected then the ray 
$\gamma_{\theta,c} $ with an external argument $\theta$ is a hyperbolic geodesic in $\hat{\C}\setminus {\K}_c$.

Of particular importance is the critical external angle $\theta(c)$, the angle of the gradient line which passes through $c$. Any line in the parameter space of the form $\theta(c) = \omega$ will be named an external ray with angle $\omega$ and denoted  by $\Gamma_{\omega}$ or simply $\Gamma$.
The following relation holds,
$$c\in \Gamma_{\omega}\Leftrightarrow c\in \gamma_{\omega,c}
\;.$$
 The external rays are hyperbolic geodesics in $\hat{\C\setminus \M}$. The Green function for $\M$ satisfies  $G_{\M}(c)=G_c(c)$ and for every $c\in \C\setminus {\M}$,
$$\Psi^{-1}(c)=\exp(G_c(c)+2\pi i \theta(c))\;.$$

\paragraph{Yoccoz puzzle pieces.}
Again, we refer to the construction in~\cite{fine}.
An initial order $0$ Yoccoz puzzle  is regarded as fixed and then a Yoccoz
puzzle piece of order $k\geq 0$ is one that is mapped into a piece of
order $0$ by $k$ iterations.

$b_{k,c}$ will denote a piece of order $k$
which contains $0$ - it may not exist for all $k$. Then $\beta_{k,c}=f_c(b_{k,c})$.
Since $c$ and $0$ are in different pieces of order $0$, $\beta_{k,c}$ is
disjoint from any piece which contains $0$. 

\paragraph{Nesting for typical parameters.}
Fix a typical parameter $c_0$ with respect to the harmonic measure. By
Proposition 8 of~\cite{fine}, for any $M^*$ we can find a sequence of nesting
critical pieces
\[ b_{N_0,c_0} \supset b_{N_1,c_0} \supset b_{N_2,c_0} \supset b_{N_3,c_0} \supset b_{N_4,c_0}\]
and a box locus $V_{N_5}$, $N_5>N_4>\cdots>N_1>10$, such that for every $c\in V_{N_5}$ the
nesting condition mentioned above also holds, and
\[ \mod \left( b_{N_{j}} \setminus \overline{b}_{N_{j-1}} \right) \geq
M^* \; \]
for $j=1,2,3,4$.

$M^*$ is a parameter of the construction which in turn defines $N_j$,
$j=0,1,\cdots,5$. For brevity, write $Q(M^*)$ for constants which only depend on
$M^*,d,N_j$.
When dynamical objects depend on $c$, we will supress $c_0$ from 
the notation, i.e. $b_{N_1,c_0}$ could simply be $b_{N_1}$.

\paragraph{Returns to a large scale.}
For a typical $c_0$ we can futher construct an increasing sequence $(S_n)_{n\geq 1}$ such that
\begin{itemize}
\item
  for every $n$, there is a critical piece $b_{S_n+N_0}$ which is
  mapped uni-critically onto $b_{N_0}$ by $f^{S_n}$, 
\item
  for every $n$, $f^{S_n}(0)\in b_{N_4}$,
\item
  $S_n > 10 N_4$ and $\frac{S_{n+1}}{S_n} < \frac{11}{10}$ for all $n$,
\item
  $ \lim_{n\rightarrow\infty} (S_{n+1} - S_n) = \infty$,  
   \item $ \lim_{n\rightarrow\infty}\frac{S_{n+1}}{S_n} =1\;. $
\end{itemize}

\paragraph{First return maps.}
If $b_{n,c}$ is a critical piece, then $\phi_{n,c}$ will denote the
first entry map into $b_{n,c}$ (first entry meaning that it is the
identity on $b_{n,c}$ itself). 

Let $\Phi(c,z)$ denote the natural holomorphic motion, wherever it is
defined. 
\begin{lem}\label{lem:22xa,1}
For any $n$ and $c\in V_{S_n+N_1}$, the natural holomorphic motion
starting at $c_0$ is defined on the complement of the closure of the domain of
$\phi_{S_n+N_1}$.
\end{lem}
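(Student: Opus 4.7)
The plan is to construct the natural holomorphic motion on the stated region by pulling back along iterated inverse branches of $f_c$. The guiding principle is that any $f_{c_0}$-orbit avoiding the deepest critical puzzle piece $b_{S_n+N_1,c_0}$ can be tracked holomorphically for $c \in V_{S_n+N_1}$, since by the very definition of the box locus the combinatorial structure of the Yoccoz puzzle through level $S_n+N_1$ (the moving critical piece $b_{S_n+N_1,c}$, its boundary rays, preperiodic landing points, and equipotentials) deforms holomorphically with $c$ and provides a canonical motion $\Phi(c,\cdot)$ on the grand orbit of these boundary objects, supplemented on the exterior by the B\"ottker coordinate.

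Given $z$ in the complement of $\overline{\mathrm{dom}(\phi_{S_n+N_1,c_0})}$, the entire forward $f_{c_0}$-orbit of $z$ avoids $\overline{b_{S_n+N_1,c_0}}$. I would pick $k \geq 0$ minimal so that $f_{c_0}^k(z)$ already lies in a region where $\Phi(c,\cdot)$ is defined by the preceding step, i.e.\ the exterior, a moving boundary, or the interior of a non-critical puzzle piece accessed via a finite combinatorial recipe. Then define $\Phi(c, z)$ to be the $f_c^k$-preimage of $\Phi(c, f_{c_0}^k(z))$ along the branch prescribed by the combinatorial itinerary of $(f_{c_0}^j(z))_{0 \leq j < k}$. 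Each of the $k$ inverse-branch steps is legitimate because the corresponding point lies in a puzzle piece distinct from $b_{S_n+N_1, c}$, so the critical value of $f_c$ is avoided and the branch is univalent on a neighborhood depending holomorphically on $c \in V_{S_n+N_1}$. Because $z$ is outside the \emph{closure} of the domain, the construction extends to a small neighborhood of $z$, so one obtains joint holomorphy.

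The main obstacle is well-definedness: independence of the choice of $k$ and of the combinatorial choices used to pick inverse branches, and compatibility with the motion already given on the boundary data of the puzzle. This is resolved via the $\lambda$-lemma of Ma\~n\'e--Sad--Sullivan combined with combinatorial rigidity on $V_{S_n+N_1}$: the candidate map $c \mapsto \Phi(c, z)$ coincides with the already-defined holomorphic motion on a rich uniqueness set (preperiodic rays, landing points, and exterior Green lines whose combinatorics is fixed across $V_{S_n+N_1}$), so the local pull-back definitions glue into a single holomorphic function on $V_{S_n+N_1}$ and extend the natural motion. A final verification is that the recipe is insensitive to replacing $k$ by $k+1$, which follows from functoriality of the pull-back and from the fact that the branch choice for the extra step is also determined by the stable combinatorics.
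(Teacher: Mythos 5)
Your construction has a genuine gap at its very first step. You define $\Phi(c,z)$ by choosing a minimal $k$ such that $f_{c_0}^k(z)$ ``already lies in a region where $\Phi(c,\cdot)$ is defined'' and then pulling back by inverse branches of $f_c^k$. But the points for which the lemma is really needed are Julia-set points in the complement of the closure of the domain of $\phi_{S_n+N_1}$: their entire forward orbit stays in ${\cal J}_{c_0}$ and never enters $b_{S_n+N_1,c_0}$, so it never reaches the exterior, a moving boundary arc, or any other set on which the motion is given in advance. Listing ``the interior of a non-critical puzzle piece'' among the regions where the motion is already defined is circular: the motion of Julia points inside such pieces is exactly what the lemma is supposed to produce. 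Hence the recursion never gets started for these points, and no finite $k$ exists. The subsequent appeal to the $\lambda$-lemma together with an unproved ``combinatorial rigidity on $V_{S_n+N_1}$'' does not repair this, since the $\lambda$-lemma only extends a motion to the closure of a set where it is already defined, and the stability of the puzzle combinatorics over the whole box locus is precisely the nontrivial input that must be justified, not assumed.

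The paper's argument is different in structure and avoids this problem: it does not construct the motion point by point, but extends the already-defined natural motion of a fixed point $z_0$ by analytic continuation in the parameter. The key inputs are that $V_{S_n+N_1}$ is simply connected (Lemma 2.6 of \cite{fine}), that for every $c$ in a quasi-disk $D\subbset V_{S_n+N_1}$ the orbit of $\Phi(c,z_0)$ avoids $b_{S_n+N_1,c}$ and therefore stays uniformly away from the critical point $0$ (Lemma 2.8 of \cite{fine}), and an extension criterion (Lemma 2.2 of \cite{fine}) which then guarantees that $\Phi(\cdot,z_0)$ continues holomorphically past every boundary point of $D$. If you want to salvage your approach, you would need a statement of this type --- that a point whose continued orbit stays away from the critical point keeps moving holomorphically --- to handle the Julia-set points that never escape; at that stage your pull-back bookkeeping becomes unnecessary, since the continuation argument over the simply connected $V_{S_n+N_1}$ already yields the lemma.
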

\begin{proof}
  Take a point $z_0$ in the complement of the closure of the domain of
  $\phi_{S_n+N_1}$. We will show that its natural holomorphic motion
  extends to $V_{S_n+N_1}$. By Lemma 2.6 of~\cite{fine}, we know that
  $V_{S_n+N_1}$ is simply connected. It will suffice to prove that for
  any quasi-disk $D$ compactly contained in $V_{S_n+N_1}$, the holomorphic
  motion can be extended to an open set which contains
  $\overline{D}$. 

  For any $c\in D$, the orbit of $\Phi(c,z_0)$ under $f_c$ forever avoids
  $b_{S_n+N_1,c}$. From Lemma 2.8 of~\cite{fine}, it implies that the
  distance of that orbit to $0$ remains uniformly bounded way from $0$
  on $D$. By Lemma 2.2 of~\cite{fine}, if $c'$ is now on the boudnary
  of $D$, that means that $\Phi(c,z_0)$ extends to a neighborhood of
  $c'$.   
\end{proof}

\begin{lem}\label{lem:23xa,1}
  There is a natural holomorphic motion defined on
  \[ V_{S_n+N_1} \times \left( \partial\beta_{S_n+N_2} \setminus {\cal J}\right) \]
\end{lem}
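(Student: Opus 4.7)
The plan is to extend the natural holomorphic motion to $\partial\beta_{S_n+N_2}\setminus\J$ by observing that each connected component of this set lies entirely in the basin of infinity and is parametrized by fixed external-angle/potential data that are combinatorially stable on $V_{S_n+N_1}$. First, I would note that $\partial\beta_{S_n+N_2,c_0}\setminus\J_{c_0}$ is a finite union of arcs, each either a piece of an external ray $\gamma_{\theta,c_0}$ with a rational angle $\theta$ of the form $p/(d^{k}(d-1))$ or a piece of an equipotential $\{G_{c_0}=L\}$ at a dynamically prescribed level $L$; removing $\J_{c_0}$ leaves all such arcs in $\{G_{c_0}>0\}$, i.e.\ in the domain of the B\"ottcher coordinate.

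Next, I would define the motion by preserving B\"ottcher coordinates: for $z_0$ on such an arc, let $w_0=\Psi_{c_0}^{-1}(z_0)$ with $|w_0|>1$ and set $\Phi(c,z_0):=\Psi_c(w_0)$, where $\Psi_c$ is continued from $\infty$ by the dynamical formula $\Psi_c(w^d)=f_c(\Psi_c(w))$ described in Section~\ref{sec:25hp,1}. Membership of $c$ in the box locus $V_{S_n+N_1}$ guarantees that every bounding ray and equipotential of $\beta_{S_n+N_2,c_0}$ still bounds $\beta_{S_n+N_2,c}$ with the same external angle and level, so $\Phi(c,z_0)$ indeed lands on the corresponding arc. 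On the overlap with the set supplied by Lemma~\ref{lem:22xa,1} both definitions coincide, since the natural holomorphic motion of a basin-of-infinity point is exactly B\"ottcher conjugation.

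The hard part will be the parameters $c\in V_{S_n+N_1}\setminus\M$. For such $c$ the B\"ottcher coordinate is a priori only defined on $\{|w|>e^{G_c(0)}\}$, whereas $|w_0|$ may be strictly smaller than $e^{G_c(0)}$ because the ray arc can cut below the critical equipotential. I would continue $\Psi_c$ across that equipotential by selecting inverse branches of $f_c$ that track the puzzle-piece combinatorics of $\Psi_c(w)$ as $w$ moves along a path from $\infty$ to $w_0$. The stability of combinatorics on $V_{S_n+N_1}$, together with the fact that the arcs miss $\J_c$ and all preimages of $0$, so that no critical point of $G_c$ is encountered, makes this branch choice canonical and holomorphic in $c$. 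Once the continuation is in place, holomorphy of $c\mapsto\Psi_c(w_0)$, and hence of $\Phi$, follows from the holomorphic dependence of $f_c$ and of the chosen inverse branches on the parameter.
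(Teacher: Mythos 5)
There is a genuine gap. Your construction (transport by B\"ottcher coordinates, then continue $\Psi_c$ below the critical equipotential for $c\notin{\cal M}_d$ by pulling back with inverse branches of $f_c$) is exactly what the ``natural holomorphic motion'' is, so the whole content of the lemma is the claim you merely assert: that for \emph{every} $c\in V_{S_n+N_1}$ the branch continuation never meets a critical point of $G_c$, i.e.\ that the rays and equipotential arcs bounding $\beta_{S_n+N_2,c}$ do not bifurcate as $c$ ranges over the box. Your sentence ``Membership of $c$ in the box locus $V_{S_n+N_1}$ guarantees that every bounding ray and equipotential of $\beta_{S_n+N_2,c_0}$ still bounds $\beta_{S_n+N_2,c}$ with the same external angle and level'' is essentially a restatement of the lemma, and it is not automatic: the piece $\beta_{S_n+N_2}$ has depth $S_n+N_2$, which is \emph{strictly greater} than the depth $S_n+N_1$ of the parameter box, so the stability of its boundary is not part of the definition of $V_{S_n+N_1}$ and cannot be invoked for free. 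Likewise ``the arcs miss all preimages of $0$'' is true at $c_0$ but is precisely what can fail under perturbation; a collision of a bounding ray with a precritical point is exactly the mechanism by which the motion breaks down, so assuming it begs the question.

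What is missing is the combinatorial verification that bridges the depth gap $N_2-N_1$. The paper reduces the lemma to the criterion of Lemma~2.10 of~\cite{fine}: the natural motion extends to $\partial\beta_{S_n+N_2}\setminus{\cal J}$ over $V_{S_n+N_1}$ provided $f^k(\partial b_{S_n+N_2})\cap b_{S_n+N_1}=\emptyset$ for all $0<k\leq N_2-N_1$ (after these iterates the boundary is at depth at most $S_n+N_1$ and is covered by the box construction). This condition is then checked dynamically: a nonempty intersection would force a return $f^k(b_{S_n+N_1})\supset b_{S_n+N_1}$, and since $f^{S_n}$ is uni-critical on $b_{S_n+N_1}$ the smallest possible such $k$ is $S_n$, which exceeds $N_2-N_1$ because $S_n>N_5>N_2$. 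Your argument needs this step (or an equivalent one) to show that the inverse branches you select along the path from $\infty$ to $w_0$ stay away from the critical point uniformly over the box; without it, the ``canonical and holomorphic'' branch choice is unjustified.
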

\begin{proof}
 By Lemma 2.10 of~\cite{fine} we need to check that $f^k(\partial
 b_{S_n+N_2}) \cap b_{S_n+N_1} = \emptyset$ for $0<k\leq N_2-N_1$. If
 for any $k>0$ that intersection is non-empty, then $f^k(b_{S_n+N_1})
 \supset b_{S_n+N_1}$. Since $f^{S_n}$ is uni-critical on
 $b_{S_n+N_1}$ the smallest $k$ for which it could occur is $k=S_n$.   
 But we assumed $S_n > N_5 > N_2$.
\end{proof}

\begin{lem}\label{lem:23xa,2}
When $\Phi$ is the holomorphic motion on $\partial \beta_{S_n+N_2}$,
natural outside of $\cal J$, then the equation
\[ \Phi(c,z) = c \]
for $z\in\partial\beta_{S_n+N_2}$ has exactly one simple zero on
$\partial V_{S_n+N_2}$. 
\end{lem}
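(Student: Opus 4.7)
I would prove this via an explicit arc-by-arc matching between $\partial V_{S_n+N_2}$ and $\partial\beta_{S_n+N_2,c_0}$.

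For $z\in\partial\beta_{S_n+N_2}\setminus\J$ fixed, set $h_z(c):=\Phi(c,z)-c$, which by Lemma~\ref{lem:23xa,1} is holomorphic on $V_{S_n+N_1}$. The first observation is that every zero of $h_z$ lies on $\partial V_{S_n+N_2}$: by the defining property of the parapuzzle piece and naturality of $\Phi$, the condition $c\in V_{S_n+N_2}$ is equivalent to $c\in\Phi(c,\beta_{S_n+N_2,c_0})$, whence $h_z(c)=0$ with $z\in\partial\beta_{S_n+N_2}$ forces $c\in\partial V_{S_n+N_2}$, while the interior and exterior cases are ruled out by open/closed considerations.

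Second, I would match $\partial V_{S_n+N_2}$ with $\partial\beta_{S_n+N_2,c_0}$ explicitly. Both are Jordan curves composed of finitely many arcs of external rays and equipotentials, with matching combinatorics: a parameter external ray at angle $\omega$ bounds $V_{S_n+N_2}$ iff $\gamma_{\omega,c_0}$ bounds $\beta_{S_n+N_2,c_0}$, and the bounding equipotentials share a common Green-function height. Since $\Phi$ preserves the external-angle coordinate on rays and the equipotential height, and since $G_c(c)=G_\M(c)$ along parameter rays, the rule ``match angle and height'' uniquely solves $\Phi(c,z(c))=c$ and produces an orientation-preserving homeomorphism $\psi:\partial V_{S_n+N_2}\to\partial\beta_{S_n+N_2,c_0}$, $c\mapsto z(c)$. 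Existence and uniqueness of the zero of $h_z$ then follow from $\psi$ being a bijection. Simplicity follows from the implicit-function description: $\psi$ is $C^1$ with non-vanishing derivative on each smooth arc, so $\partial_c\Phi(c_*,z)\neq 1$ at the zero $c_*=\psi^{-1}(z)$, and hence $h_z'(c_*)\neq 0$. Alternatively, a zero of multiplicity $\geq 2$ would split under a perturbation of $z$ along $\partial\beta_{S_n+N_2}$, contradicting the bijectivity of $\psi$.

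The main technical difficulty I expect is handling the finitely many corners of the two boundaries, where ray and equipotential arcs meet: these corner points lie in $\J_{c_0}$ or on $\partial\M$, where $\Phi$ is not defined a priori. The remedy is that the exceptional set is finite, so $\Phi$ extends continuously from both sides of each corner and the bijection $\psi$ passes to the limit cleanly.
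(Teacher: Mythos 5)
The central gap is circularity. Your existence argument rests on the asserted matching ``a parameter ray at angle $\omega$ bounds $V_{S_n+N_2}$ iff $\gamma_{\omega,c_0}$ bounds $\beta_{S_n+N_2,c_0}$, at the same equipotential height.'' In the logical order of the construction here and in~\cite{fine}, this phase--parameter boundary correspondence is not an input but a consequence of precisely the kind of statement you are asked to prove: the paper disposes of the lemma by citing Lemma~2.11 of~\cite{fine}, which is the degree-one/argument-principle statement from which one afterwards reads off that $\partial V_{S_n+N_2}$ consists of the corresponding parameter rays and equipotential arcs. So the crux --- that the candidate point $c_*=\Psi\bigl(\Psi_{c_0}^{-1}(z)\bigr)$ really lies on $\partial V_{S_n+N_2}$, equivalently that the equation $\Phi(c,z)=c$ has any solution there --- is assumed rather than proven. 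Likewise, your preliminary claim that all zeros of $h_z$ lie on $\partial V_{S_n+N_2}$ needs more than ``open/closed considerations'': that $c\in\Phi\bigl(c,\partial\beta_{S_n+N_2,c_0}\bigr)$ forces $c\in\overline{V_{S_n+N_2}}$ is itself a transversality-type assertion. What your B\"ottcher-coordinate idea does give cleanly is uniqueness: a zero must satisfy $c\notin\M$ (the moved point keeps positive potential), and then naturality of the motion together with the identity $\Psi^{-1}(c)=\Psi_c^{-1}(c)$ yields $\Psi^{-1}(c)=\Psi_{c_0}^{-1}(z)$, hence $c=\Psi\bigl(\Psi_{c_0}^{-1}(z)\bigr)$ is the only possible zero (with some care on the low-potential ray arcs, where one must first push forward by the dynamics before the B\"ottcher description applies); also note the ray--equipotential corners are at positive potential, so they are not in $\J_{c_0}$ as you state.

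Neither version of your simplicity argument works. The implicit-function remark is a non sequitur: smoothness and injectivity of a boundary parametrization $c\mapsto z(c)$ give no control on the complex derivative $h_z'(c_*)=\partial_c\Phi(c_*,z)-1$, and since $\Phi(\cdot,z)$ is holomorphic in $c$ only with $z$ confined to a curve, differentiating $\Phi(c,z(c))=c$ along that curve mixes in an unknown tangential $z$-derivative. The Hurwitz alternative also fails: a double zero may persist as a double zero when $z$ is perturbed along $\partial\beta_{S_n+N_2}$, so bijectivity of $\psi$ --- a statement about zero sets, not multiplicities --- produces no contradiction. The repair is available inside your own framework: with $w=\Psi_{c_0}^{-1}(z)$ and $\Phi(c,z)=\Psi_c(w)$ near $c_*$, univalence of $\zeta\mapsto\Psi_c^{-1}(\zeta)$ gives $\Psi^{-1}(c)-w=\Psi_c^{-1}(c)-\Psi_c^{-1}\bigl(\Psi_c(w)\bigr)=-u(c)\,h_z(c)$ with $u(c_*)\neq 0$, so the zero of $h_z$ has the same multiplicity as that of $\Psi^{-1}(c)-w$, which is one because $\Psi$ is univalent. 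With that, uniqueness and simplicity are sound, but the existence/location claim still requires the parapuzzle input (Lemma~2.11 of~\cite{fine}, or an argument-principle computation over $\partial V_{S_n+N_1}$), which is exactly what the paper cites and what your sketch presupposes.
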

\begin{proof}
  That follows directly from Lemma 2.11 in~\cite{fine}.
\end{proof}

The key fact which establishes the existence of the similarity map is
the following:

\begin{prop}\label{prop:23xp,1}
There is a $Q(M^*)$- quasiconformal homeomorphism $\Upsilon_n$ defined on a
neighborhood of the $V_{S_n+N_3}$ which fixes $c_0$ and coincides with
$\Psi\circ\Psi_{c_0}^{-1}$ outside the closure of the domain of
$\phi_{S_n+N_1}$. The constant $Q(M^*)$ tends to $1$ as $M^*$ tends to
$\infty$. 
\end{prop}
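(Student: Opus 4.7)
The construction of $\Upsilon_n$ glues two natural maps across an annulus of large modulus, with the holomorphic motion $\Phi$ acting as a bridge between the parameter and phase spaces. For the outer part, set $\Upsilon_n := \Psi\circ\Psi_{c_0}^{-1}$ outside the closure of the domain of $\phi_{S_n+N_1}$; this is conformal there and automatically satisfies the boundary requirement of the proposition. Lemma~\ref{lem:22xa,1} guarantees that on this complement the natural holomorphic motion extends to $V_{S_n+N_1}$, making the outer definition dynamically coherent with the interior construction that follows.

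For the inner part, first use Lemma~\ref{lem:23xa,2} to identify $\partial V_{S_n+N_2}$ with $\partial\beta_{S_n+N_2,c_0}$: to each $c\in\partial V_{S_n+N_2}$ associate the unique $z\in\partial\beta_{S_n+N_2,c_0}$ with $\Phi(c,z)=c$. Extend this boundary identification inward to a quasi-conformal homeomorphism $h_{\mathrm{in}}$ by applying Slodkowski's $\lambda$-lemma to the holomorphic motion of Lemma~\ref{lem:23xa,1}; evaluating at the critical-value fibre yields $h_{\mathrm{in}}(c_0)=c_0$ automatically. Finally, on the transition annulus between $\partial V_{S_n+N_2}$ and the boundary of the domain of $\phi_{S_n+N_1}$, interpolate by solving the Beltrami equation whose coefficient equals that of $h_{\mathrm{in}}$ on the inside and vanishes on the outside; the measurable Riemann mapping theorem produces the required $\Upsilon_n$ on a neighborhood of $V_{S_n+N_3}$.

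The main obstacle is the bound $Q(M^*)\to 1$ as $M^*\to\infty$. The mechanism is that the moduli $\mod(b_{N_j}\setminus\overline{b}_{N_{j-1}})\geq M^*$ in the phase space of $f_{c_0}$ transfer, through the natural holomorphic motion, to comparable moduli for the parameter-space annuli $V_{S_n+N_j}\setminus\overline{V}_{S_n+N_{j-1}}$. Slodkowski's quantitative bound then controls the dilatation of $h_{\mathrm{in}}$ on $V_{S_n+N_3}$ by a universal function of $M^*$ tending to $1$. The delicate point is to track this modulus transfer consistently across the four nested levels $N_1,\dots,N_4$ while ensuring that the Beltrami interpolation in the outermost annulus does not add further distortion beyond that of $h_{\mathrm{in}}$; the extra room provided by four nested pieces rather than two is precisely what allows each gluing step to take place across an annulus of modulus at least $M^*$, yielding the degenerating bound $Q(M^*)\to 1$.
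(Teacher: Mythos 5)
The paper itself does not reprove this statement: its proof is a two-line citation of Proposition~5 of \cite{fine} for the construction and of Lemma~4.3 of \cite{fine} for the claim $Q(M^*)\to 1$. Your general strategy --- keep $\Psi\circ\Psi_{c_0}^{-1}$ outside, transfer phase to parameter through the diagonal equation $\Phi(c,z)=c$, and pay for the gluing with dilatation controlled by annuli of modulus comparable to $M^*$ --- is indeed the spirit of that construction, but as a standalone argument it has genuine gaps.

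First, you only build the inner map over one central piece. The domain of $\phi_{S_n+N_1}$ is a countable union of components accumulating on ${\cal J}_{c_0}$, and $\Upsilon_n$ has to be produced on \emph{every} one of them, with dilatation bounded by the same $Q(M^*)$ and matching on each component's boundary the values of $\Psi\circ\Psi_{c_0}^{-1}$; moreover, outside the closure of the domain of $\phi_{S_n+N_1}$ there remains an uncountable (expanding) part of ${\cal J}_{c_0}$ on which $\Psi\circ\Psi_{c_0}^{-1}$ is not defined a priori, so one must first show that the outside map extends quasiconformally across this residual set --- that is the real role of Lemma~\ref{lem:22xa,1} (the natural motion on the complement of the closure of the entry domain) combined with the $\lambda$-lemma, which you invoke only as ``dynamical coherence.'' Second, your gluing step is not well posed: there is no ``transition annulus between $\partial V_{S_n+N_2}$ and the boundary of the domain of $\phi_{S_n+N_1}$,'' since the first is a parameter-space curve and the second a phase-space set; and solving a Beltrami equation whose coefficient equals that of $h_{\mathrm{in}}$ inside and vanishes outside produces \emph{some} quasiconformal map, but the measurable Riemann mapping theorem gives no control that this map coincides with $\Psi\circ\Psi_{c_0}^{-1}$ outside and with $h_{\mathrm{in}}$ inside. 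What is actually needed is a quasiconformal interpolation between two prescribed boundary correspondences across an annulus of modulus at least a fixed multiple of $M^*$, with dilatation $\to 1$, together with the transfer of the phase-space moduli to parameter-space moduli, which you assert rather than prove; these two points are precisely the content of the cited Lemma~4.3 of \cite{fine}. A smaller issue: Lemma~\ref{lem:23xa,2} gives, for each $z\in\partial\beta_{S_n+N_2}$, a unique $c\in\partial V_{S_n+N_2}$ with $\Phi(c,z)=c$, not the assignment $c\mapsto z$ you describe, and the injectivity and continuity of the induced boundary map (hence that it is a homeomorphism onto $\partial V_{S_n+N_2}$) still require an argument.
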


This follows from Proposition 5 of ~\cite{fine}, while the modulus
claim follows from Lemma 4.3. 

\paragraph{The similarity map.}
The similarity map $\Upsilon_n$
allows one to subdivide $V_{S_n+N_3}$ in a way that
is homeomorphic to the subdivision of $b_{S_n+N_1}$ into the components
of the domain of $\phi_{S_{n+1}+N_1}$. This subdivision is the best we can
do on the annulus $V_{S_n+N_3}\setminus \overline{V}_{S_{n+1}+N_3}$
since the inner component can then be subdivied using $\Upsilon_{n+1}$
and they will match along the common boundary.

Let $\Upsilon$ mean the homeomorphism defined on a neighborhood of $c$
which is $\Upsilon_n$ on $A_n$. It is quasi-conformal, since the
boundaries of pieces $\beta_{S_n+N_3,c}$ are quasi-circles and
therefore removable.

\subsection{Nesting of Yoccoz pieces.}
\begin{lem}\label{lem:24xa,1}
For any $n$, pieces $f^j(b_{S_n+N_1})$ are disjoint from $b_{S_n+N_1}$
for $1\leq j<S_n$.
\end{lem}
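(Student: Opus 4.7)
The plan is to argue by contradiction, combining the standard dichotomy for Yoccoz puzzle pieces with the uni-criticality of $f^{S_n}$ on $b_{S_n+N_0}$ that is built into the construction.

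Suppose for some $1\leq j<S_n$ the intersection $f^j(b_{S_n+N_1})\cap b_{S_n+N_1}$ were non-empty. The image $f^j(b_{S_n+N_1})$ is itself a Yoccoz piece of order $S_n+N_1-j$, and since $j\geq 1$ this order is strictly smaller than $S_n+N_1$. Any two Yoccoz pieces are either disjoint or nested, with the lower-order piece containing the higher-order one, so the hypothesis forces $f^j(b_{S_n+N_1})\supset b_{S_n+N_1}$. In particular, since $b_{S_n+N_1}$ is critical, $0\in f^j(b_{S_n+N_1})$.

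Because $N_1>N_0$, the inclusion $b_{N_1}\subset b_{N_0}$ pulls back under the uni-critical branched cover $f^{S_n}:b_{S_n+N_0}\to b_{N_0}$ to $b_{S_n+N_1}\subset b_{S_n+N_0}$. Hence $f^j(b_{S_n+N_0})\supset f^j(b_{S_n+N_1})\ni 0$, so there exists $z\in b_{S_n+N_0}$ with $f^j(z)=0$. Since $0<j<S_n$, this $z$ is a critical point of $f^{S_n}|_{b_{S_n+N_0}}$. By Riemann-Hurwitz applied to the degree-$d$ branched covering $f^{S_n}:b_{S_n+N_0}\to b_{N_0}$ of topological disks, the total critical multiplicity equals $d-1$, and the full $d-1$ is already contributed by the critical point $0\in b_{S_n+N_0}$ (the map $f$ has local degree $d$ there and no other iterate $f^i$, $0<i<S_n$, sends $b_{S_n+N_0}$ through $0$ by the uni-criticality hypothesis). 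Therefore $z=0$, giving $f^j(0)=0$. This makes $0$ a super-attracting periodic point of $f_{c_0}$, placing $c_0$ in a hyperbolic component of $\M$ and hence in its interior -- contradicting $c_0\in\partial\M$.

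The only delicate step is the level bookkeeping: one must correctly observe that $f^j$ lowers the order of a puzzle piece by exactly $j$, so that $f^j(b_{S_n+N_1})$ is the larger piece in the containment, forcing $0$ into its interior. Everything else is a direct application of Riemann-Hurwitz and the uni-criticality of $f^{S_n}$ on $b_{S_n+N_0}$, which is exactly the property recorded in the construction of the sequence $(S_n)$.
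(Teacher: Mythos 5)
Your proof is correct and follows essentially the same route as the paper: the puzzle-piece dichotomy (pieces intersect only if nested, with the lower-order piece containing the higher-order one) reduces everything to the containment $f^j(b_{S_n+N_1})\supset b_{S_n+N_1}$, which is then excluded by the uni-criticality of $f^{S_n}$ on the critical piece. Your Riemann--Hurwitz bookkeeping and the super-attracting-cycle contradiction (forcing $c_0$ into the interior of ${\cal M}_d$) merely spell out in detail what the paper compresses into the one-line remark that uni-criticality forbids $f^j(b_{S_n+N_1})\supset b_{S_n+N_1}$.
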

\begin{proof}
Since $f^{S_n}$ is uni-critical on $b_{S_n+N_1}$, we cannot have
$f^j(b_{S_n+N_1}) \supset b_{S_n+N_1}$. The opposite inclusion is also
impossible, because eventually $b_{S_n+N_1}$ must be mapped on a piece
of order $0$.
\end{proof}

\paragraph{The predecessor function.}
\begin{defi}\label{defi:28xa,1}
  For $n\geq 1$, let $\sigma(n)$ denote the smallest $k\geq 1$ for which
  $S_k \geq \frac{S_{n+2}}{2}$.
\end{defi}

By our hypothesis, for $n>1$ we get $\sigma(n)\leq n-1$. 

\begin{lem}\label{lem:23xp,1}
Let $n\geq 2$ and $b_{k}$ denote the critical component of the domain
of the first return map into $b_{S_{\sigma(n)}+N_1}$. Then $k>S_{n+1}+N_4$.
\end{lem}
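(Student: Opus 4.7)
The plan is to translate the conclusion into a lower bound on the first return time of the critical point to $b_{S_{\sigma(n)}+N_1}$, then obtain that lower bound from Lemma~\ref{lem:24xa,1} applied with $n$ replaced by $\sigma(n)$, and finally read off the claim from the defining property of $\sigma(n)$ together with the growth of the sequence $(S_n)$.

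Let $\tau$ denote the first return time of the critical point $0$ to $b_{S_{\sigma(n)}+N_1}$. Since $b_k$ is the critical component of the domain of the first return map, the Markov property of Yoccoz pieces exhibits $b_k$ as the critical pullback of $b_{S_{\sigma(n)}+N_1}$ under $f^\tau$, so that
$$k \;=\; S_{\sigma(n)}+N_1+\tau.$$
The desired estimate $k>S_{n+1}+N_4$ is therefore equivalent to the inequality $\tau+S_{\sigma(n)}>S_{n+1}+N_4-N_1$.

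Next I bound $\tau$ from below. Lemma~\ref{lem:24xa,1} with $n$ replaced by $\sigma(n)$ asserts that $f^j(b_{S_{\sigma(n)}+N_1})\cap b_{S_{\sigma(n)}+N_1}=\emptyset$ for every $1\leq j<S_{\sigma(n)}$. Because $0\in b_{S_{\sigma(n)}+N_1}$, this disjointness forces $f^j(0)\notin b_{S_{\sigma(n)}+N_1}$ for such $j$, so $\tau\geq S_{\sigma(n)}$. Invoking Definition~\ref{defi:28xa,1}, namely $S_{\sigma(n)}\geq S_{n+2}/2$, I obtain
$$k \;\geq\; 2S_{\sigma(n)}+N_1 \;\geq\; S_{n+2}+N_1.$$

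To complete the proof it remains to show $S_{n+2}-S_{n+1}>N_4-N_1$, which is where the main technical content sits. For large $n$ this is immediate from the growth property $\lim_{n\to\infty}(S_{n+1}-S_n)=\infty$ built into the construction of $(S_n)$. For $n=2$ and other small indices it must be extracted from the baseline constraint $S_1>10N_4$ combined with the coordinated choice of $(S_n)$ and of the $N_j$ through the Markov parameter $M^*$; alternatively one sharpens the bound on $\tau$ by observing that equality $\tau=S_{\sigma(n)}$ would force $f^{S_{\sigma(n)}}(0)\in b_{S_{\sigma(n)}+N_1}\subsetneq b_{N_4}$, a deeper recurrence at the prescribed scale that the combinatorial selection of $(S_n)$ for harmonic-measure-typical parameters rules out. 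Either route yields $k\geq S_{n+2}+N_1>S_{n+1}+N_4$.
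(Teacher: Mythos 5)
Your opening reductions are sound: the identity $k=S_{\sigma(n)}+N_1+\tau$ for the return time $\tau$ of the critical point, the bound $\tau\geq S_{\sigma(n)}$ from Lemma~\ref{lem:24xa,1} applied at level $\sigma(n)$, and hence $k\geq 2S_{\sigma(n)}+N_1\geq S_{n+2}+N_1$ via Definition~\ref{defi:28xa,1}, are all correct. The genuine gap is the last step, to which you have shifted all the content: the inequality $S_{n+2}-S_{n+1}>N_4-N_1$ is simply not available from the construction. The sequence $(S_n)$ is only required to satisfy $S_n>10N_4$, $S_{n+1}/S_n<11/10$, $S_{n+1}-S_n\to\infty$ and $S_{n+1}/S_n\to 1$; none of these bounds the increments $S_{n+2}-S_{n+1}$ from below for a given $n$, while $N_4-N_1$ can be arbitrarily large (it grows with the modulus parameter $M^*$). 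So your argument proves a true but different estimate, $k\geq S_{n+2}+N_1$, which neither implies nor is implied by the claimed $k>S_{n+1}+N_4$, and it yields the Lemma only for $n$ beyond a threshold depending on $M^*$ and $c_0$, not for all $n\geq 2$. Neither proposed patch closes this: there is no ``coordinated choice'' of $(S_n)$ and $(N_j)$ stated anywhere that links increments of $S_n$ to $N_4-N_1$; and ruling out the equality $\tau=S_{\sigma(n)}$ (which in any case is not excluded by the listed properties --- they only give $f^{S_m}(0)\in b_{N_4}$) would improve your bound on $k$ by $1$, whereas you may need to make up a deficit of order $N_4-N_1$.

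The paper gets the term $N_4$ into the bound dynamically rather than arithmetically, and this is the idea your proposal is missing. After excluding $b_k\supset b_{S_{n+1}+N_1}$ (that case would force $S_{n+2}>2S_{\sigma(n)}$, contradicting Definition~\ref{defi:28xa,1}), one has $b_k\subsetneq b_{S_{n+1}+N_1}$, so $f^{S_{n+1}}(b_k)$ is a puzzle piece of order $k-S_{n+1}$ which meets $b_{N_4}$ because $f^{S_{n+1}}(0)\in b_{N_4}$. The possibility $f^{S_{n+1}}(b_k)\supset b_{N_4}$ is excluded by a covering argument (it would produce a forbidden early return of the critical piece, again via Lemma~\ref{lem:24xa,1} and $S_n>10N_4$), so $f^{S_{n+1}}(b_k)\subsetneq b_{N_4}$, and comparing orders of nested pieces gives $k-S_{n+1}>N_4$ directly, for every $n\geq 2$. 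If you want to salvage your route, you would have to replace the inequality $S_{n+2}-S_{n+1}>N_4-N_1$ by an argument of this kind about the order of the image piece $f^{S_{n+1}}(b_k)$; the arithmetic of the sequence $(S_n)$ alone cannot do it.
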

\begin{proof}
Since $b_k \subset b_{{S_\sigma(n)}+N_1}$, then we must have $k\geq S_{\sigma(n)}$ by Lemma~\ref{lem:24xa,1}.

Suppose next that $b_k$ contains $b_{S_{n+1}+N_1}$. Then  $S_{n+1} \geq k+S_{\sigma(n)}$, since $f^k$ first
maps $b_{S_{n+1}+N_1}$ into
$b_{S_{\sigma(n)}+N_1}$ which needs at least $S_{\sigma(n)}$ more iterates to cover $b_{N_1}$, while $f^{S_{n+1}}(b_{S_{n+1}+N_1})=b_{N_1}$.
Hence, $S_{n+2} > S_{n+1} \geq 2S_{\sigma(n)}$,
which contradicts Definition~\ref{defi:28xa,1}.

Consequently, $b_k$ is strictly contained in $b_{S_{n+1}+N_1}$. 
Since $0\in b_k$ is mapped by $f^{S_{n+1}}$
into $b_{N_4}$, $f^{S_{n+1}}(b_k) \cap b_{N_4} \neq \emptyset$. If
$f^{S_{n+1}}(b_k) \supset b_{N_4}$, then recall that $b_k$ is a domain of the first
retun map into $b_{\sigma(n)}$. Since we assumed $S_n \geq 10N_4$ for all $n$,
$b_{\sigma(n)} \subsetneq b_{N_4}$.
So, $f^p(b_k)$ must have covered
$b_{\sigma(n)}$ including $0$ for some $0<p<S_{n+1}$, but this contradicts Lemma~\ref{lem:24xa,1}.

The only remaining possiblity is $f^{S_{n+1}}(b_k) \subsetneq b_{N_4}$, in which
case $k > S_{n+1}+N_4$.
\end{proof}

We will write $A_n =
\beta_{S_n+N_3}\setminus\overline{\beta}_{S_{n+1}+N_3}$. 

\begin{lem}\label{lem:25xp,1}
Any component of the domain of $\phi_{S_{\sigma(n)}+N_1}$ which intersects
$A_n$ is contained in it.
\end{lem}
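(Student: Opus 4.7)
The approach combines the nested-or-disjoint property of Yoccoz puzzle pieces with the depth estimate of Lemma~\ref{lem:23xp,1}. Let $W$ be a component of the domain of $\phi_{S_{\sigma(n)}+N_1}$ with $W\cap A_n\neq\emptyset$. The pieces $W$, $\beta_{S_n+N_3}$, $\beta_{S_{n+1}+N_3}$ are pairwise either nested or have disjoint interiors. The hypothesis excludes both $W$ being disjoint from $\beta_{S_n+N_3}$ and $W\subset\beta_{S_{n+1}+N_3}$ (either would give $W\cap A_n=\emptyset$). Since $\beta_{S_{n+1}+N_3}\subset\beta_{S_n+N_3}$, ruling out the remaining bad configuration $\beta_{S_{n+1}+N_3}\subset W$ leaves exactly $W\subset A_n$.

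Suppose for contradiction $\beta_{S_{n+1}+N_3}\subset W$. Then $c_0\in W$. Since $c_0\in\beta_{S_{\sigma(n)}+N_1}$ and $\beta_{S_{\sigma(n)}+N_1}\cap b_{S_{\sigma(n)}+N_1}=\emptyset$, $W$ does not contain $0$, hence is non-critical. Thus there is $j\geq 1$ with $f^j(W)=b_{S_{\sigma(n)}+N_1}$, and because iteration by $f$ decreases the order of a puzzle piece by exactly one, $\mathrm{ord}(W)=S_{\sigma(n)}+N_1+j$, while $\mathrm{ord}(\beta_{S_{n+1}+N_3})=S_{n+1}+N_3-1$. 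Nesting $\beta_{S_{n+1}+N_3}\subset W$ therefore forces
\[ j \;\leq\; S_{n+1}-S_{\sigma(n)}+N_3-N_1-1. \]

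For a matching lower bound, use $c_0=f_{c_0}(0)$ to rewrite $f^j(c_0)\in b_{S_{\sigma(n)}+N_1}$ as $f^{j+1}(0)\in b_{S_{\sigma(n)}+N_1}$. So $j+1$ is a return time of $0$ to $b_{S_{\sigma(n)}+N_1}$, giving $j+1\geq j_0$, where $j_0$ is the first return time. By Lemma~\ref{lem:23xp,1}, the critical component $b_{S_{\sigma(n)}+N_1+j_0}$ of the first return map satisfies $S_{\sigma(n)}+N_1+j_0>S_{n+1}+N_4$, whence
\[ j \;\geq\; S_{n+1}-S_{\sigma(n)}+N_4-N_1. \]
Comparing the two bounds yields $N_4<N_3$, contradicting $N_4>N_3$.

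The only delicate point is the identification of the first-entry time of $c_0$ with the first-return time of $0$ into the same critical piece $b_{S_{\sigma(n)}+N_1}$; once that link is made via $c_0=f(0)$, the rest is a short bookkeeping of puzzle-piece orders.
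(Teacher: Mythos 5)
Your proof is correct and follows essentially the same route as the paper: reduce, via the nested-or-disjoint property of puzzle pieces, to excluding $\beta_{S_{n+1}+N_3}\subset W$, then rule that configuration out with Lemma~\ref{lem:23xp,1} through the correspondence $c_0=f_{c_0}(0)$ between the first entry of the critical value and the first return of the critical point. The paper phrases this last step as a one-line containment statement about the critical component of $f^{-1}(W)$ instead of your explicit order bookkeeping, but the underlying argument is the same.
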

\begin{proof}
  Let $\tilde{\zeta}$ be a component of the domain of $\phi_{S_{\sigma(n)+N_1}}$.
  Since Yoccoz pieces intersect only if one contains the other, the claim of
  the Lemma is equivalent to showing that $\beta_{S_{n+1}+N_3}\not\subset\tilde{\zeta}$. If, to the contrary, the inclusion holds, then $f^{-1}(\tilde{\zeta})$ contains a critical piece which is a component
of the domain of the first return map into $b_{S_{\sigma(n)}+N_1}$. That
piece cannot contain $b_{S_{n+1}+N_3}$ by Lemma~\ref{lem:23xp,1}.
\end{proof}  
  
\begin{prop}\label{prop:24xa,1}
  On any component of its domain,
  the mapping $\phi_{S_n+N_3}$ extends univalently to
range $b_{S_{\sigma(n)+N_1}}$. 
\end{prop}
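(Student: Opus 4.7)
The plan is to take any component $D$ of the domain of $\phi_{S_n+N_3}$, let $k\geq 0$ be the first-entry time so that $f^k|_D\colon D\to b_{S_n+N_3}$, and extend $f^k$ univalently by pulling back the larger target $b_{S_{\sigma(n)}+N_1}\supset b_{S_n+N_3}$ (the containment holds because $S_{\sigma(n)}+N_1<S_n+N_3$, so the piece of lower order contains the piece of higher order) step-by-step along the iterates of $f$. The critical component $D=b_{S_n+N_3}$ with $k=0$ is handled trivially by the inclusion. Otherwise $k\geq 1$, $D$ avoids $0$, and first entry yields $f^j(D)\cap b_{S_n+N_3}=\emptyset$ for $0\leq j<k$. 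I would then set $V_k=b_{S_{\sigma(n)}+N_1}$ and inductively let $V_j$ be the component of $f^{-1}(V_{j+1})$ containing $f^j(D)$; each $V_j$ is a Yoccoz piece of order $S_{\sigma(n)}+N_1+(k-j)$, and the required univalent extension $f^k\colon V_0\to V_k$ exists exactly when $0\notin V_j$ for every $0\leq j\leq k-1$, since $0$ is the sole critical point of $f$.

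The non-criticality of the $V_j$ is the heart of the argument, and I would establish it by contradiction. Suppose $0\in V_{j_0}$ for some $0\leq j_0\leq k-1$. Then $V_{j_0}$ is the unique critical Yoccoz piece $b_M$ of order $M=S_{\sigma(n)}+N_1+(k-j_0)$, and $f^{k-j_0}(0)\in b_{S_{\sigma(n)}+N_1}$, so $k-j_0\geq 1$ is a return time of the critical orbit into $b_{S_{\sigma(n)}+N_1}$. Hence $k-j_0\geq T_0$, where $T_0$ is the first return time of $0$, giving $M\geq K$ with $b_K$ the critical component of the first return map $\phi_{S_{\sigma(n)}+N_1}$ at $0$ (so that $K=S_{\sigma(n)}+N_1+T_0$). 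By Lemma~\ref{lem:23xp,1}, $K>S_{n+1}+N_4$, whence $M\geq K>S_{n+1}+N_4>S_n+N_3$. Since $b_M$ and $b_{S_n+N_3}$ are critical pieces both containing $0$ and critical pieces of higher order are nested inside those of lower order, $b_M\subsetneq b_{S_n+N_3}$. Therefore $\emptyset\neq f^{j_0}(D)\subseteq V_{j_0}=b_M\subset b_{S_n+N_3}$, contradicting $f^{j_0}(D)\cap b_{S_n+N_3}=\emptyset$.

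The principal obstacle, already resolved by Lemma~\ref{lem:23xp,1}, is the lower bound $K>S_{n+1}+N_4$ on the order of the first-return critical component into $b_{S_{\sigma(n)}+N_1}$. Without that estimate, a critical pullback piece of intermediate order could perfectly well harbor $f^{j_0}(D)$ without forcing it strictly inside $b_{S_n+N_3}$, and the univalent-pullback mechanism would break down. Once that bound is in hand, the remainder is a matter of matching the orders of nested Yoccoz pieces, as carried out above.
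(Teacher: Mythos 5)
Your proof is correct, but it runs in the opposite direction from the paper's. You pull the enlarged target $b_{S_{\sigma(n)}+N_1}$ back along the orbit $f^j(D)$, $0\le j\le k$, and exclude a critical pullback: if $0\in V_{j_0}$, then $k-j_0$ is a return time of the critical orbit, so $V_{j_0}$ is a critical piece of order at least that of the critical first-return component, which by Lemma~\ref{lem:23xp,1} exceeds $S_{n+1}+N_4>S_n+N_3$; nesting of critical pieces then puts $f^{j_0}(D)$ inside $b_{S_n+N_3}$, contradicting first entry. The paper instead argues forward: it factors the entry time through successive components of the first return map to $b_{S_{\sigma(n)}+N_1}$, each necessarily non-critical (again by Lemma~\ref{lem:23xp,1}, since the critical return component sits inside $b_{S_{n+1}+N_3}\subset b_{S_n+N_3}$ while the intermediate images of $\zeta$ avoid $b_{S_n+N_3}$), and assembles the extension as a composition of univalent return branches, with a short termination argument ($k+\sum k_j$ bounded by the order of $\zeta$). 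Both proofs hinge on exactly the same key input, Lemma~\ref{lem:23xp,1}; your version avoids the iteration and termination bookkeeping, at the price of two small facts you should state explicitly but which are standard for Yoccoz puzzles: that the critical component of the first return map into $b_{S_{\sigma(n)}+N_1}$ is the critical piece of order $S_{\sigma(n)}+N_1+T_0$ with $T_0$ the first return time of $0$, and that on a first-entry component $D$ with entry time $k$ one has $f^j(D)\cap b_{S_n+N_3}=\emptyset$ for $0\le j<k$. Note also that your extension domain $V_0$ is itself a Yoccoz piece not containing $c_0$, so Corollary~\ref{coro:24xa,1} goes through unchanged with your construction.
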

\begin{proof}
Write $\zeta$ for the component of the domain of $\phi_{S_n+N_3}$ and
let $\tilde{\zeta}$ be the component of the domain of $\phi_{S_{\sigma(n)}+N_1}$
which contains $\zeta$.

For some $k$, $f^k$ maps $\tilde{\zeta}$ univalently onto
$b_{S_{\sigma(n)}+N_1}$ and $\zeta$ into a subpiece $f^k(\zeta)$. If $0\in f^k(\zeta)$,
then since $\zeta$ was a component of the first entry map into $b_{S_n+N_3}$,  $f^k(\zeta)$ coincides with $b_{S_n+N_3}$ and the claim of the Proposition follows.

Otherwise, $f^k(\zeta) \cap b_{S_n+N_3} =\emptyset$. Then
consider the first return map from $b_{S_{\sigma(n)}+N_1}$ into
itself. $f^k(\zeta)$ belongs to some component $\tilde{\zeta}_1$
of the domain of that map. It
cannot be the critical component which must be contained in
$b_{S_{n+1}+N_3}$ by Lemma~\ref{lem:23xp,1}. Thus, $\tilde{\zeta}_1$ is mapped onto $b_{S_{\sigma(n)}+N_1}$
univalently by some $f^{k_1}$ and $f^{k+k_1}(\zeta)$ is again a subpiece
of $b_{S_{\sigma(n)}+N_1}$. Then we repeat the entire reasoning to conclude
  that either $f^{k+k_1}(\zeta) = b_{S_n+N_3}$ and the claim of the
  Proposition follows, or $f^{k+k_1}(\zeta)$ belongs to a non-critical
  component $\tilde{\zeta}_2$ of the first return map into
  $b_{S_{\sigma(n)}+N_1}$ and can be
  pushed univalently by another $f^{k_2}$. The process has to end
  eventually, since $k+\sum k_j$ cannot exceed the order of $\zeta$.
\end{proof}

\begin{coro}\label{coro:24xa,1}
If a component of the domain of $\phi_{S_n+N_1}$ is contained in $A$,
the domain of univalent extension onto $b_{S_{\sigma(n)}+N_1}$ mentioned in
Proposition~\ref{prop:24xa,1} is disjoint from the external ray which
lands at $c_0$.
\end{coro}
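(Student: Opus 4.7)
The plan is to deduce the disjointness from a single-piece confinement property of the phase-space external ray $\gamma$ at each order of the Yoccoz puzzle, combined with Lemma~\ref{lem:25xp,1}.

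First I would localize the univalent extension domain. Let $\tilde{\zeta}$ denote the component of the domain of $\phi_{S_{\sigma(n)}+N_1}$ containing $\zeta$; by the construction in the proof of Proposition~\ref{prop:24xa,1} the univalent extension domain $D$ onto $b_{S_{\sigma(n)}+N_1}$ satisfies $D\subset\tilde{\zeta}$. Since $\zeta\subset A_n=\beta_{S_n+N_3}\setminus\overline{\beta}_{S_{n+1}+N_3}$, Lemma~\ref{lem:25xp,1} forces $\tilde{\zeta}\subset A_n$. Because the critical value $c_0$ lies in the interior of $\beta_{S_{n+1}+N_3}$, we have $c_0\notin\overline{A}_n$, and in particular $c_0\notin\overline{\tilde{\zeta}}$.

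Next I would establish the following single-piece confinement of $\gamma$: at every puzzle order $K$, the tail of $\gamma$ lying inside the equipotential bounding the order-$K$ pieces is a connected sub-arc contained in a single Yoccoz piece of order $K$, namely the unique one whose closure contains the landing point $c_0$. Two ingredients enter. First, $G_{c_0}$ is strictly monotone along the gradient line $\gamma$, so $\gamma$ meets each equipotential at exactly one point and the tail of $\gamma$ below any equipotential is connected. Second, the boundary of each order-$K$ piece consists of an equipotential arc together with arcs of external rays landing at preperiodic points; since distinct external rays of $G_{c_0}$ are pairwise disjoint, $\gamma$ cannot cross any of these ray-arcs, so in the relevant equipotential disk it remains in a single piece. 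The distinguished piece is then identified by letting $\gamma$ approach $c_0$.

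Applying this confinement statement with $K$ equal to the order of $\tilde{\zeta}$ (note $\tilde{\zeta}\subset A_n$ lies inside the order-$K$ equipotential disk) yields the claim: since $c_0\notin\overline{\tilde{\zeta}}$, the piece $\tilde{\zeta}$ is not the distinguished order-$K$ piece, and by disjointness of Yoccoz pieces of the same order $\gamma\cap\tilde{\zeta}=\emptyset$, hence $\gamma\cap D=\emptyset$. The subtlest step is the single-piece confinement itself, which is the only dynamical content of the argument; it rests on the elementary but crucial combination of the non-crossing of external rays with the monotonicity of $G_{c_0}$ along the gradient line $\gamma$, everything else being an immediate consequence of Lemma~\ref{lem:25xp,1} and the definitions.
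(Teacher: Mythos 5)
Your argument is correct and is essentially the paper's proof, which simply asserts that the extension domain is a Yoccoz puzzle piece not containing $c_0$ and is therefore disjoint from the ray landing at $c_0$. You merely supply the details behind that one-liner: Lemma~\ref{lem:25xp,1} to place the extension domain inside $A_n$ (hence away from $c_0$), and the standard confinement of the gradient line $\gamma$ to the puzzle piece at $c_0$ at each depth, which is exactly the implicit content of the paper's sentence.
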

\begin{proof}
  Since that domain is a Yoccoz puzzle piece and does not contain $c_0$,
  it is disjoint from the ray.
\end{proof}

\section{Metric estimates}
\subsection{Uniform shrinking.}
\paragraph{Lyapunov exponent.}
Let $\lambda$ denote the Lyapunov exponent of $f_{c_0}$ at $c_0$. We know that
$\lambda>0$ by~\cite{grsw,sm} and furthermore, $\lambda = \log d$
by~\cite{gniotek}. 

\paragraph{Roundness of pieces.}
Let us introduce a definition.
\begin{defi}\label{defi:24xp,1}
Consider a simply connected bounded domain $U\subset \CC$ and $z_0\in U$. We will say
that $U$ is $K$-balanced with respect to $z_0$ if for any $z\in U$,
$\theta\in\RR$, $z_0 + K^{-1}e^{i\theta}(z-z_0) \in U$.  
\end{defi}  

Domains $\beta_{S_n+N_j}$, $j=1,\cdots, 4$ are $K(M^*)$ balanced with
respect to $c$, while $b_{S_n+N_j}$ are $Q(M^*)$ balanced with respect
to $0$.  
They also are $K(M^*)$-quasi-discs. These properties will be
referred to as the {\em roundness} of critical pieces.
The roundness directly follows from the conditions imposed on returns
to the large scale. 

\begin{lem}\label{lem:26xp,1}
 \[ \log \left(\diam\beta_{S_n+N_1}\right)^{-1} = S_n\lambda +
 o_{M^*}(S_n) \; .\]
 \end{lem}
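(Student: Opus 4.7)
The plan is to compare $\diam \beta_{S_n+N_1}$ with the derivative $|(f_{c_0}^{S_n-1})'(c_0)|$ via Koebe distortion, and then invoke the identity $\lambda = \log d$ valid at typical $c_0$.

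First I would set up the relevant univalent pull-back. Since $f_{c_0}^{S_n}$ is uni-critical on $b_{S_n+N_0}$, the iterates $f_{c_0}^k(b_{S_n+N_0})$ for $1\leq k\leq S_n-1$ avoid the critical point $0$, so $f_{c_0}^{S_n-1}$ maps $\beta_{S_n+N_0} = f_{c_0}(b_{S_n+N_0})$ univalently onto $\beta_{N_0}$, and a fortiori restricts to a univalent map $\beta_{S_n+N_1} \to \beta_{N_1}$. Let $g_n$ denote the inverse branch defined on all of $\beta_{N_0}$. Next I would note that $\beta_{N_1}$ is compactly contained in $\beta_{N_0}$ with modulus bounded below by $M^*/d$: indeed $f_{c_0}$ is an unbranched $d$-to-one covering of $b_{N_0}\setminus\overline{b}_{N_1}$ onto $\beta_{N_0}\setminus\overline{\beta}_{N_1}$ (since $0\in b_{N_1}$), and $\mod(b_{N_0}\setminus\overline{b}_{N_1})\geq M^*$ by the nesting condition from the construction of $N_j$.

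By Koebe's distortion theorem applied to $g_n$ on $\beta_{N_0}$, the derivative $|g_n'|$ has bounded oscillation on $\beta_{N_1}$, with a bound depending only on $M^*$. Since $c_0\in\beta_{S_n+N_1}$ (because $c_0 = f_{c_0}(0)$ and $0\in b_{S_n+N_1}$), the identity $f_{c_0}^{S_n-1}(c_0) = f_{c_0}^{S_n}(0)$ places $z^* := f_{c_0}^{S_n}(0)$ in $\beta_{N_1}$ (consistent with $f_{c_0}^{S_n}(0)\in b_{N_4}$ from the returns-to-large-scale construction). Evaluating at $z^*$,
\[
\diam \beta_{S_n+N_1} = \diam g_n(\beta_{N_1}) \asymp |g_n'(z^*)|\cdot \diam \beta_{N_1} = \frac{\diam \beta_{N_1}}{|(f_{c_0}^{S_n-1})'(c_0)|},
\]
with implicit constants depending only on $M^*$. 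Since $\diam \beta_{N_1}$ is itself a constant depending only on $M^*$ and the fixed $N_j$, taking logarithms yields
\[
\log\bigl(\diam \beta_{S_n+N_1}\bigr)^{-1} = \log|(f_{c_0}^{S_n-1})'(c_0)| + O_{M^*}(1).
\]

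Finally, the Lyapunov identity $\lambda = \log d$ at typical $c_0$ (cited just above from \cite{grsw,sm,gniotek}) gives $\log|(f_{c_0}^{S_n-1})'(c_0)| = (S_n-1)\lambda + o(S_n) = S_n\lambda + o_{M^*}(S_n)$, which combined with the previous display completes the proof. The only nontrivial point of the plan is the bookkeeping for the Koebe argument: checking that $g_n$ is defined on the fixed-scale domain $\beta_{N_0}$ compactly containing $\beta_{N_1}$, and that the natural evaluation point $f_{c_0}^{S_n}(0)$ lies in $\beta_{N_1}$ so that $|g_n'(z^*)| = 1/|(f_{c_0}^{S_n-1})'(c_0)|$. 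Once these are in place, the estimate follows from standard distortion and the Lyapunov limit.
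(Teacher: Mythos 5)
Your argument is essentially the paper's own proof: the map $f_{c_0}^{S_n-1}$ on $\beta_{S_n+N_1}$ extends univalently to a fixed-scale piece, Koebe distortion (with constants depending on $M^*$) compares $\diam\beta_{S_n+N_1}$ to $\left|\bigl(f_{c_0}^{S_n-1}\bigr)'(c_0)\right|^{-1}$, and the existence of the Lyapunov exponent at a typical $c_0$ finishes the estimate. One correction in the bookkeeping: $f_{c_0}^{S_n-1}$ maps $\beta_{S_n+N_0}$ onto $b_{N_0}$ and $\beta_{S_n+N_1}$ onto $b_{N_1}$, not onto $\beta_{N_0}$, $\beta_{N_1}$ (indeed $\beta_{N_1}$ is disjoint from every piece containing $0$, so your remark that $z^*=f_{c_0}^{S_n}(0)\in\beta_{N_1}$ is ``consistent with'' $z^*\in b_{N_4}$ is contradictory as written); after replacing $\beta_{N_j}$ by $b_{N_j}$ in the image, every step of your Koebe/Lyapunov argument holds and coincides with the paper's proof.
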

\begin{proof}
  $f^{S_{n}-1}$ maps $\beta_{S_n+N_1}$ onto $b_{N_1}$ with distortion bounded
    in terms of $M^*$, since the map extends univalently onto
    $b_{N_0}$.  The estimate follows from the notion of the Lyapunov exponent. 
 \end{proof}

\begin{lem}\label{lem:25xp,2}
  For all $n>1$,
  \[ \sum_{k=2}^n \mod\left( \beta_{S_{k-1}+N_1}
  \setminus\overline{\beta}_{S_k+N_1}\right) =
   S_n\lambda + o_{M^*}(S_n) \; .\]
\end{lem}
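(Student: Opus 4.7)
The proof should combine a per-term estimate of each annular modulus with telescoping and Lemma~\ref{lem:26xp,1}. Set $A_k := \beta_{S_{k-1}+N_1}\setminus\overline{\beta}_{S_k+N_1}$ and $r_k:=\diam\beta_{S_k+N_1}$. The roundness paragraph asserts that $\beta_{S_k+N_1}$ is $K(M^*)$-balanced with respect to $c$, so
$$\D(c,r_k/K(M^*))\;\subset\;\beta_{S_k+N_1}\;\subset\;\D(c,r_k).$$
This produces a corresponding sandwich of $A_k$ between concentric round annuli,
$$\D(c,r_{k-1}/K)\setminus\overline{\D}(c,r_k)\;\subset\; A_k\;\subset\;\D(c,r_{k-1})\setminus\overline{\D}(c,r_k/K),$$
each separating the same pair of boundary components of the larger annulus. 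Monotonicity of the conformal modulus under such inclusions then gives
$$\mod A_k = \log\br{r_{k-1}/r_k}+O_{M^*}(1),$$
with the implicit constant independent of $k$ (in degenerate cases where $r_{k-1}<Kr_k$ the lower bound is taken to be $0$, which is absorbed into the same error).

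Summing telescopes the logarithm:
$$\sum_{k=2}^n\mod A_k = \log r_1 - \log r_n + O_{M^*}(n).$$
Applying Lemma~\ref{lem:26xp,1} at $k=n$ gives $-\log r_n=S_n\lambda+o_{M^*}(S_n)$. The term $\log r_1$ is a constant depending only on $M^*$ (since $S_1$ is fixed by the construction), hence $o_{M^*}(S_n)$ as $n\to\infty$.

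To finish, I have to show the accumulated per-term error $O_{M^*}(n)$ is $o_{M^*}(S_n)$. This follows from the dispersion hypothesis $\lim_{k\to\infty}(S_{k+1}-S_k)=\infty$ on the return sequence: for any $C>0$ there is a $k_0$ with $S_{k+1}-S_k\geq C$ for $k\geq k_0$, so $n\leq k_0+S_n/C$ and therefore $\limsup_{n\to\infty}n/S_n\leq 1/C$; since $C$ is arbitrary, $n=o(S_n)$. Combining the three pieces yields the claim.

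The only genuinely delicate point is the bookkeeping of the $n$ additive $O_{M^*}(1)$ errors that arise because the pieces are only $K(M^*)$-balanced rather than round; this is exactly what the sparseness $S_{k+1}-S_k\to\infty$ neutralizes.
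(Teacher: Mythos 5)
Your proposal is correct and follows essentially the same route as the paper: roundness (balancedness) of the pieces gives $\sum_{k=2}^n \mod\left(\beta_{S_{k-1}+N_1}\setminus\overline{\beta}_{S_k+N_1}\right) = -\log\diam\beta_{S_n+N_1}+O_{M^*}(n)$, Lemma~\ref{lem:26xp,1} converts the logarithm to $S_n\lambda+o_{M^*}(S_n)$, and the linear-in-$n$ error is absorbed because $S_n/n\to\infty$. You merely spell out the per-annulus sandwich and the deduction $n=o(S_n)$ from $S_{k+1}-S_k\to\infty$, which the paper leaves implicit.
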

\begin{proof}
    Since pieces $\beta_{S_n+N_1}$ are all round,
    \[ \sum_{k=2}^n \mod\left( \beta_{S_{k-1}+N_1}
      \setminus\overline{\beta}_{S_k+N_1} \right) =
      -\log\diam\beta_{S_n+N_1} + O_{M^*}(n) \; .\]
    Since $\lim_{n \rightarrow\infty} S_n/n = \infty$, the term linear
    in $n$ can be absorded into the constant $o_{M^*}(S_n)$ and so the
    Lemma follows from Lemma~\ref{lem:26xp,1}.   
\end{proof}

\begin{prop}\label{prop:25xp,1}
For every component $\zeta$ of the domain of $\phi_{S_n+N_1}$, 
\[ \log\left(\diam(\zeta)\right)^{-1}  \geq S_n \frac{\lambda}{d}
+o_{M^*}(S_n) \; .\] 
\end{prop}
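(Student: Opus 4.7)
My plan is to proceed by strong induction on $n$. The base case is trivial: for $n$ bounded in terms of $M^*$, $\diam\zeta$ is bounded by the diameter of the initial Yoccoz puzzle while $S_n\lambda/d$ is a bounded constant, both absorbed into $o_{M^*}(S_n)$.

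The unifying preliminary is a lower bound for moduli of nested critical $b$-annuli. Since $f$ restricts to an unramified degree-$d$ cover of $b_{S_{k-1}+N_1}\setminus \overline{b}_{S_k+N_1}$ onto $\beta_{S_{k-1}+N_1}\setminus\overline{\beta}_{S_k+N_1}$, one has $\mod(b_{S_{k-1}+N_1}\setminus \overline{b}_{S_k+N_1}) = \frac{1}{d}\mod(\beta_{S_{k-1}+N_1}\setminus\overline{\beta}_{S_k+N_1})$. Summing and invoking Lemma~\ref{lem:25xp,2} with Gr\"otzsch's inequality gives
\[ \mod\bigl(b_{S_j+N_1}\setminus\overline{b}_{S_n+N_1}\bigr) \geq \frac{(S_n - S_j)\lambda}{d} + o_{M^*}(S_n) \]
for any $j<n$, with the full sum ($j=1$) yielding $S_n\lambda/d + o_{M^*}(S_n)$.

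For the critical component $\zeta = b_{S_n+N_1}$, the full modulus sum combined with the standard modulus-diameter estimate $\diam K \leq C \diam(\text{outer})\cdot e^{-\mod A}$ (using $\diam b_{S_1+N_1} = Q(M^*)$) gives the bound directly. For a non-critical component $\zeta$, $f^m:\zeta\to b_{S_n+N_1}$ is univalent, and the proof of Proposition~\ref{prop:24xa,1} goes through verbatim with $N_3$ replaced by $N_1$, since by Lemma~\ref{lem:23xp,1} the critical component of the first return into $b_{S_{\sigma(n)}+N_1}$ has order exceeding $S_{n+1}+N_4$ and is hence contained in $b_{S_n+N_1}$. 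Thus $f^m$ extends univalently to a Yoccoz piece $\tilde\zeta \supset \zeta$ with $f^m(\tilde\zeta) = b_{S_{\sigma(n)}+N_1}$. Let $\hat\zeta$ denote the first-entry component of $\phi_{S_{\sigma(n)}+N_1}$ containing $\zeta$; by Yoccoz nesting $\tilde\zeta\subset\hat\zeta$, because the first-entry time of $\zeta$ into the larger piece $b_{S_{\sigma(n)}+N_1}\supset b_{S_n+N_1}$ does not exceed $m$, so the order of $\tilde\zeta$ is at least that of $\hat\zeta$. The induction hypothesis at the strictly smaller index $\sigma(n)$ gives $\diam\hat\zeta \leq e^{-S_{\sigma(n)}\lambda/d + o_{M^*}(S_{\sigma(n)})}$, and univalent pull-back preserves the annulus modulus, so
\[ \mod(\tilde\zeta\setminus\overline\zeta) \,=\, \mod\bigl(b_{S_{\sigma(n)}+N_1}\setminus\overline{b}_{S_n+N_1}\bigr) \,\geq\, \frac{(S_n - S_{\sigma(n)})\lambda}{d} + o_{M^*}(S_n). \]
Combining the diameter bound on $\tilde\zeta$ with the modulus-diameter estimate gives $\diam\zeta \leq C\,\diam\tilde\zeta\cdot e^{-\mod(\tilde\zeta\setminus\overline\zeta)} \leq e^{-S_n\lambda/d + o_{M^*}(S_n)}$, closing the induction.

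The principal obstacle is the combinatorial adaptation of Proposition~\ref{prop:24xa,1} to $\phi_{S_n+N_1}$; the rest is routine quasiconformal bookkeeping. A minor subtlety is that $\tilde\zeta$ need not itself be a first-entry component for $\phi_{S_{\sigma(n)}+N_1}$, which forces the induction hypothesis to be invoked on the enclosing Yoccoz piece $\hat\zeta\supset\tilde\zeta$, but monotonicity of diameter makes this free.
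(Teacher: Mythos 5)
Your proposal is correct and follows essentially the same route as the paper: the paper likewise combines the $1/d$-rescaled modulus sum of Lemma~\ref{lem:25xp,2} with the univalent extension of Proposition~\ref{prop:24xa,1} (tacitly adapted from $\phi_{S_n+N_3}$ to $\phi_{S_n+N_1}$), and concludes by superadditivity of moduli and Teichm\"{u}ller's estimates. Your strong induction down the $\sigma$-chain merely unrolls what the paper does in one shot --- surrounding $\zeta$ by the full nest of pulled-back annuli --- and in doing so makes explicit the iteration of the extension step and the $N_3\to N_1$ adaptation that the paper's terse proof leaves implicit.
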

\begin{proof}
  From Lemma~\ref{lem:25xp,2},
   \[ \sum_{k=2}^n \mod\left( b_{S_{k-1}+N_1}
  \setminus\overline{b}_{S_k+N_1}\right) =
   S_n\frac{\lambda}{d} + o_{M^*}(S_n) \; .\] 
From Proposition~\ref{prop:24xa,1} $\zeta$ is surrounded by nesting
annuli which are conformally equivalent to $b_{S_{k-1}+N_1}
  \setminus\overline{b}_{S_k+N_1}$.
  The claim follows by superadditivity of moduli
  and Teichm\"{u}ller's modulus estimates, see~\cite{lehvi}.  
\end{proof}

\paragraph{Additional estimates on the sizes of pieces.}
Now we denote by $\{\zeta_{n,j}\}_{j=1}^{\infty}$
the components of the domain of $\phi_{S_n+N_1}$
which are contained in $A_n$.

\begin{lem}\label{lem:26xp,2}
For any $\epsilon>0$ and $M^*$ there is $n_0$ such that if $n\geq
n_0$, then  
\[ \sup\left\{ \diam \zeta_{n,j} :\: j=1,\cdots \right\} \leq
  \diam\left(\beta_{S_n+N_3}\right)^{1+\frac{1}{d}-\epsilon} \;
  .\]
  \end{lem}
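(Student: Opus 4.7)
The plan is to transport $\zeta_{n,j}$ by a univalent iterate to a fixed scale and then apply Proposition~\ref{prop:25xp,1} at that scale. The key observation is that $f^{S_n-1}:\beta_{S_n+N_1}\to b_{N_1}$ is univalent. This comes from the factorisation $f^{S_n}|_{b_{S_n+N_1}}=f^{S_n-1}\circ f$: since $f^{S_n}$ is unicritical of degree $d$ on $b_{S_n+N_1}$ and the first factor $f|_{b_{S_n+N_1}}$ already absorbs the full degree $d$, the remaining $f^{S_n-1}$ on $\beta_{S_n+N_1}=f(b_{S_n+N_1})$ must have degree one. The same factorisation restricted to $b_{S_n+N_3}$ gives $f^{S_n-1}(\beta_{S_n+N_3})=b_{N_3}$, so by univalence $\mod(\beta_{S_n+N_1}\setminus\overline{\beta}_{S_n+N_3})=\mod(b_{N_1}\setminus\overline{b}_{N_3})\geq 2M^*$ (by the modulus hypothesis on consecutive initial pieces).

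Next I would apply Koebe's theorem to $f^{S_n-1}|_{\beta_{S_n+N_1}}$ on the sub-domain $\beta_{S_n+N_3}$, which is separated from the boundary by the definite annulus just identified. This yields
\[ \diam\zeta_{n,j}\leq K(M^*)\,\frac{\diam\beta_{S_n+N_3}}{\diam b_{N_3}}\,\diam f^{S_n-1}(\zeta_{n,j})\;.\]
To bound the image I invoke Lemma~\ref{lem:24xa,1}: since $f^j(b_{S_n+N_1})\cap b_{S_n+N_1}=\emptyset$ for $1\leq j<S_n$, the first-entry time to $b_{S_n+N_1}$ of any point in $\beta_{S_n+N_1}$ is at least $S_n-1$. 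Hence the first-entry time $k$ of $\zeta_{n,j}$ satisfies $k\geq S_n-1$, and $f^{S_n-1}(\zeta_{n,j})$ is either $b_{S_n+N_1}$ itself (when $k=S_n-1$) or a component of $\phi_{S_n+N_1}$ with first-entry time $\geq 1$. Proposition~\ref{prop:25xp,1}, together with Lemma~\ref{lem:25xp,2} in the borderline case, then gives $\diam f^{S_n-1}(\zeta_{n,j})\leq\exp(-S_n\lambda/d+o_{M^*}(S_n))$.

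Combining this with $\diam\beta_{S_n+N_3}=\exp(-S_n\lambda+o_{M^*}(S_n))$, which follows from Lemma~\ref{lem:26xp,1} and the bounded modulus separating $\beta_{S_n+N_3}$ from $\beta_{S_n+N_1}$, I obtain
\[ \diam\zeta_{n,j}\leq C(M^*)\exp\!\bigl(-S_n\lambda(1+1/d)+o_{M^*}(S_n)\bigr)=\diam(\beta_{S_n+N_3})^{1+1/d-o(1)}\;.\]
Since $\log(1/\diam\beta_{S_n+N_3})\to\infty$, the constant $C(M^*)$ and the $o_{M^*}(S_n)$ error are absorbed into an arbitrarily small shift of the exponent, yielding the claimed bound for any fixed $\epsilon>0$ once $n\geq n_0(\epsilon,M^*)$. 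The delicate point is the degree-accounting that produces global univalence of $f^{S_n-1}$ on the full piece $\beta_{S_n+N_1}$; without this, Koebe distortion would degrade with $n$ and the whole scheme would collapse.
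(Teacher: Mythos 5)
Your argument is correct and follows essentially the same route as the paper's proof: push $\zeta_{n,j}$ forward by the univalent $f^{S_n-1}:\beta_{S_n+N_1}\to b_{N_1}$, bound the image by Proposition~\ref{prop:25xp,1} (you merely make explicit, via Lemma~\ref{lem:24xa,1}, the entry-time bookkeeping showing the image is a component of the domain of $\phi_{S_n+N_1}$ or $b_{S_n+N_1}$ itself, which the paper leaves implicit), and pull back with distortion controlled by the definite separating annulus, absorbing constants into a small shift of the exponent. The only cosmetic difference is that you state the final comparison directly in terms of $\diam\beta_{S_n+N_3}$ whereas the paper works with $\diam\beta_{S_n+N_1}$; these are comparable with constants depending on $M^*$, so nothing essential changes.
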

\begin{proof}
  $\beta_{S_n+N_3}$ together with any $\zeta_{n,j}$ contained in it are mapped
    by $f^{S_n-1}$ into $b_{N_3}$. By Proposition~\ref{prop:25xp,1},
    \[ \diam f^{S_n-1}(\zeta_{n,j}) \leq \exp\left( - S_n\frac{\log
      d}{d}+o_{M^*}(S_n) \right) \; .\]
    Taking into account Lemma~\ref{lem:26xp,1} 
    \begin{equation}\label{equ:26xp,1}
      \diam f^{S_n-1}(\zeta_{n,j}) \leq
      \left(\diam\beta_{S_n+N_1}\right)^{1/d} \exp(o_{M^*}(S_n)) \; .
      \end{equation}
    Again by Lemma~\ref{lem:26xp,1},
    \[ \exp(o_{M^*}(S_n)) \leq
    \left(\diam\beta_{S_n+N_1}\right)^{-\epsilon} \]
    for any $\epsilon>0$ provided that $n$ is suffciently large. 

    Pulling back by $f^{S_n-1}$ will introduce another factor
    $\diam\beta_{S_n+N_1}$ on the right-hand since together with an
    error term depnding on $M^*$, which can be be absorbed in
    $o_{M^*}(S_n)$.
  \end{proof}

  \begin{lem}\label{lem:26xp,3}
  For any $\epsilon>0$ and $M^*$ there is $n_0$ such that whenever
  $n\geq n_0$, then
  \[ \diam\beta_{S_{n+2}+N_1} \geq
  \left(\diam\beta_{S_{n-1}+N_1}\right)^{1+\epsilon}\; .\]
  \end{lem}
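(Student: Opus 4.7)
The plan is to reduce the claim to an asymptotic comparison of $S_{n+2}$ and $S_{n-1}$ via Lemma~\ref{lem:26xp,1}, and then exploit the strong subexponential growth $\lim_{n\to\infty} S_{n+1}/S_n = 1$ built into the construction of $S_n$ in \emph{Returns to a large scale}.

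First, I would note that for $n$ large enough both $\diam\beta_{S_{n+2}+N_1}$ and $\diam\beta_{S_{n-1}+N_1}$ are strictly less than $1$ (consequence of Lemma~\ref{lem:26xp,1} together with $S_n\to\infty$). On this range the target inequality
\[ \diam\beta_{S_{n+2}+N_1} \;\geq\; \bigl(\diam\beta_{S_{n-1}+N_1}\bigr)^{1+\epsilon} \]
is equivalent, after taking $-\log$, to
\[ -\log\diam\beta_{S_{n+2}+N_1} \;\leq\; (1+\epsilon)\bigl(-\log\diam\beta_{S_{n-1}+N_1}\bigr). \]

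Next, I would substitute the two-sided asymptotic of Lemma~\ref{lem:26xp,1} on each side, reducing the inequality to
\[ S_{n+2}\lambda + o_{M^*}(S_{n+2}) \;\leq\; (1+\epsilon)\bigl(S_{n-1}\lambda + o_{M^*}(S_{n-1})\bigr). \]
Dividing through by $\lambda S_{n-1}$ rewrites this as $\tfrac{S_{n+2}}{S_{n-1}} + o(1) \leq (1+\epsilon)(1+o(1))$. Since
\[ \frac{S_{n+2}}{S_{n-1}} \;=\; \frac{S_{n+2}}{S_{n+1}}\cdot\frac{S_{n+1}}{S_{n}}\cdot\frac{S_{n}}{S_{n-1}} \;\longrightarrow\; 1 \]
by the hypothesis $\lim_{n\to\infty} S_{n+1}/S_n = 1$, the desired bound follows for all $n$ sufficiently large in terms of $\epsilon$ and $M^*$.

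The only conceptual point to flag is that the weaker a priori bound $S_{n+1}/S_n < 11/10$ would not be enough, as it would only give $S_{n+2}/S_{n-1} < (11/10)^3$, a constant bounded away from $1$; the stronger convergence to $1$ of consecutive ratios, explicitly included in the list defining $(S_n)$, is exactly what delivers an arbitrary $\epsilon$. Everything else is a routine manipulation of the Lyapunov-type asymptotic already established in Lemma~\ref{lem:26xp,1}.
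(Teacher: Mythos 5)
Your argument is correct and is essentially the paper's proof: both reduce the inequality, via the Lyapunov asymptotic of Lemma~\ref{lem:26xp,1}, to comparing $S_{n+2}$ with $S_{n-1}$ and then invoke $\lim_{n\to\infty}S_{n+1}/S_n=1$. The paper phrases the final step as $(S_{n+2}-S_{n-1})\lambda+o_{M^*}(S_{n+2})+o_{M^*}(S_{n-1})=o_{M^*}(S_{n-1})$ being at most $\epsilon$ times $\log\left(\diam\beta_{S_{n-1}+N_1}\right)^{-1}$, which is the same estimate you obtain after dividing by $\lambda S_{n-1}$.
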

  \begin{proof}
  By Lemma~\ref{lem:26xp,1}
  \[ \log\frac{\diam\beta_{S_{n-1}+N_1}}{\diam\beta_{S_{n+2}+N_1}} =
  (S_{n+2}-S_{n-1})\lambda + o_{M^*}(S_{n+2})+o_{M^*}(S_{n-1}) \; .\]
  Since $\lim_{n\rightarrow\infty}\frac{S_{n+1}}{S_n} = 1$, the
  right-hand side is $o_{M^*}(S_{n-1})$, which is at least
  $(\diam\beta_{S_{n-1}+N_1})^{-\epsilon}$ provided that $n$ is large
  enough.
\end{proof}

\subsection{Estimates based on the conformal measure.}
Let $\nu$ denote the conformal measure on $\cal J$. The exponent of
$\nu$ will be denoted with $2-\alpha$ and is equal to $\HD({\cal
  J})$. The existence of  a unique  non-atomic $\nu$ with the minimal exponent $\HD({\cal
  J})$ was established in~\cite{grsm1}.
Since $\HD({\cal J}) < 2$ by~\cite{grsw}, $\alpha>0$. 

\begin{lem}\label{lem:24xp,1}
Let $\zeta$ denote a component of the domain of $\phi_{S_n+N_j}$,
$n>1$, $j=1,2,3,4$. Then
\[ \nu(\zeta) \geq K_1(M^*) \left(\diam\zeta\right)^{2-\alpha} \; .\]
\end{lem}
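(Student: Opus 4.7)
The plan is to combine the $(2-\alpha)$-conformality of $\nu$ with a bounded-distortion pullback argument, reducing the bound on $\nu(\zeta)$ to a uniform lower density estimate on the critical pieces $b_{S_n+N_j}$ themselves.

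For a non-critical component $\zeta$ of $\phi_{S_n+N_j}$, the first entry map $F=f^k|_\zeta$ sends $\zeta$ univalently onto $b_{S_n+N_j}$. I would invoke Proposition~\ref{prop:24xa,1} (and the analogous extension statements at levels $N_1, N_2, N_4$, which go through by the same reasoning since the $N_j$ differ only by fixed-scale annuli of modulus $\geq M^*$): this gives a univalent extension of $F$ whose range is $b_{S_{\sigma(n)}+N_1}$ and whose image of $\zeta$ is $b_{S_n+N_j}$, compactly contained in $b_{S_{\sigma(n)}+N_1}$ with a definite modulus coming from the nested fixed-scale annuli $b_{N_{i-1}}\setminus\overline{b}_{N_i}$. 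Koebe's distortion theorem then yields $|F'(w)|\leq K(M^*)\,\diam (b_{S_n+N_j})/\diam (\zeta)$ throughout $\zeta$, and $(2-\alpha)$-conformality of $\nu$ gives
\[\nu(b_{S_n+N_j})=\int_\zeta |F'|^{2-\alpha}\,d\nu\leq K(M^*)^{2-\alpha}\left(\frac{\diam b_{S_n+N_j}}{\diam \zeta}\right)^{2-\alpha}\nu(\zeta),\]
so
\[\nu(\zeta)\geq K(M^*)^{-(2-\alpha)}\,(\diam \zeta)^{2-\alpha}\cdot\frac{\nu(b_{S_n+N_j})}{(\diam b_{S_n+N_j})^{2-\alpha}}.\]

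It then remains to bound the density $\nu(b_{S_n+N_j})/(\diam b_{S_n+N_j})^{2-\alpha}$ from below; this bound also handles the case where $\zeta$ itself is the critical component $b_{S_n+N_j}$. Since $f^{S_n}$ is uni-critical on $b_{S_n+N_j}$, I factor $f^{S_n}=G\circ f$ with $G=f^{S_n-1}|_{\beta_{S_n+N_j}}$ univalent onto $\beta_{N_j}$, so that $|(f^{S_n})'(z)|=d\,|z|^{d-1}\,|G'(f(z))|$. Koebe applied to $G$ (extended over an annulus of definite modulus separating $\beta_{S_n+N_j}$ from a larger piece) gives $|G'|\asymp_{M^*}\diam \beta_{N_j}/\diam \beta_{S_n+N_j}$, and the $Q(M^*)$-roundness of $b_{S_n+N_j}$ at $0$ combined with $f(z)=z^d+c$ forces $\diam \beta_{S_n+N_j}\asymp_{M^*}(\diam b_{S_n+N_j})^d$. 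Since $|z|\leq\diam b_{S_n+N_j}$ on $b_{S_n+N_j}$, the conformality equation
\[\nu(b_{N_j})=\int_{b_{S_n+N_j}}|(f^{S_n})'|^{2-\alpha}\,d\nu\]
yields $\nu(b_{N_j})\leq C(M^*)(\diam b_{S_n+N_j})^{-(2-\alpha)}\,\nu(b_{S_n+N_j})$, after cancelling exponents via $(d-1)(2-\alpha)-d(2-\alpha)=-(2-\alpha)$. Since $N_j$ is fixed and $\nu$ is non-atomic with $\J$ as its support, $\nu(b_{N_j})>0$, and the required density lower bound follows with $K_1(M^*)=\nu(b_{N_j})/C(M^*)$.

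The main obstacle is the critical-piece step: the derivative $(f^{S_n})'$ vanishes to order $d-1$ at the origin, so the conformal-measure integral must be organized around the local branching $z\mapsto z^d$. Only the $Q(M^*)$-roundness of $b_{S_n+N_j}$ (itself a consequence of the uni-criticality of $f^{S_n}|_{b_{S_n+N_0}}$ and the fixed-scale modulus annuli) makes the pointwise bound $|z|\leq\diam b_{S_n+N_j}$ tight enough to match the scaling $\diam \beta_{S_n+N_j}\asymp (\diam b_{S_n+N_j})^d$ coming from $G'$; without these geometric controls the exponents would not align and the density estimate would be lost.
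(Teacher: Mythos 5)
Your proof is correct and follows essentially the same route as the paper: pull back from $b_{S_n+N_j}$ to $\zeta$ via the univalent extension of Proposition~\ref{prop:24xa,1} plus Koebe distortion, then get the lower density bound on the critical piece by pushing forward to the fixed-scale piece with the conformality relation, controlling the critical point through $|f'(z)|=d|z|^{d-1}$ and the roundness relation $\diam\beta_{S_n+N_j}\asymp_{M^*}(\diam b_{S_n+N_j})^{d}$. The paper merely splits your composite $f^{S_n}=G\circ f$ into two displayed steps (through $\beta_{S_n+N_j}$, then onto $b_{N_j}$ by $f^{S_n-1}$ with bounded distortion); the only slip in your write-up is that $f^{S_n-1}(\beta_{S_n+N_j})=b_{N_j}$ rather than $\beta_{N_j}$, which is harmless since both are fixed-scale pieces.
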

\begin{proof}
By Proposition~\ref{prop:24xa,1} domain $\zeta$ is mapped onto
$b_{S_n+N_j}$ with distortion which is bounded depending on $M^*$.  
Because of roundness we get
\[ \nu(\zeta) \geq L_1(M^*) \nu(b_{S_n+N_j}) \left(
\frac{\diam \zeta}{\diam b_{S_n+N_j}}\right)^{2-\alpha} \; .\]

One can further see that
\[ \nu(b_{S_n+N_j}) \geq L_2(M*)\nu(\beta_{S_n+N_j}) \left(\frac{\diam
  b_{S_n+N_j}}{\diam\beta_{S_n+N_j}}\right)^{2-\alpha} \]
and since $\beta_{S_n+N_j}$ is mapped onto $b_{N_j}$ with bounded
distortion,
\[ \nu(\beta_{S_n+N_j}) \geq L_3(M^*) \left(\diam \beta_{S_n+N_j}\right)^{2-\alpha}
\; .\]
These estimates together yield the claim of the Lemma.
\end{proof}

We will use the symbol $\overset{M^*}{\sim}$ to join quantities which are
equivalent with positive multiplicative constants which depend on
$M^*$.  
\begin{lem}\label{lem:29xa,1}
  For any $n>1$,
  \[\nu(A_n) \overset{~M^*}{\sim} \left(\diam
  \beta_{S_n+N_3}\right)^{2-\alpha} \; .\]
\end{lem}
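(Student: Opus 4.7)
The plan is to prove both sides of the asymptotic equivalence separately, with all constants allowed to depend on $M^*$.

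\textbf{Upper bound.} Since $A_n\subset\beta_{S_n+N_3}$, it suffices to bound $\nu(\beta_{S_n+N_3})$. The map $f^{S_n-1}$ sends $\beta_{S_n+N_3}$ univalently onto $b_{N_3}$ and, by the definition of the nesting, extends univalently to $\beta_{S_n+N_0}$ which contains $\beta_{S_n+N_3}$ with modulus at least $M^*$; the Koebe distortion theorem then bounds the distortion by some $Q(M^*)$. Combining this with the roundness of $\beta_{S_n+N_3}$ and the conformality of $\nu$ with exponent $2-\alpha$, exactly as in Lemma~\ref{lem:24xp,1}, one obtains
$$\nu(\beta_{S_n+N_3})\leq C_1(M^*)(\diam\beta_{S_n+N_3})^{2-\alpha}\nu(b_{N_3}),$$
and $\nu(b_{N_3})$ is a positive absolute constant.

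\textbf{Lower bound.} Write $\nu(A_n)=\nu(\beta_{S_n+N_3})-\nu(\beta_{S_{n+1}+N_3})$. The same bounded distortion argument, run with the reverse inequality, gives
$$\nu(\beta_{S_n+N_3})\geq C_2(M^*)(\diam\beta_{S_n+N_3})^{2-\alpha},$$
while the upper bound step applied at level $n+1$ yields
$$\nu(\beta_{S_{n+1}+N_3})\leq C_1(M^*)(\diam\beta_{S_{n+1}+N_3})^{2-\alpha}.$$
By Lemma~\ref{lem:26xp,1} together with the hypothesis $\lim_n(S_{n+1}-S_n)=\infty$, one has
$$\frac{\diam\beta_{S_{n+1}+N_3}}{\diam\beta_{S_n+N_3}}\longrightarrow 0.$$
Hence there exists $n_0=n_0(M^*)$ such that for all $n\geq n_0$ the upper bound on $\nu(\beta_{S_{n+1}+N_3})$ is at most one half of the lower bound on $\nu(\beta_{S_n+N_3})$, producing
$$\nu(A_n)\geq\tfrac{1}{2}C_2(M^*)(\diam\beta_{S_n+N_3})^{2-\alpha}.$$

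\textbf{Finite range.} For the finitely many indices $1<n<n_0$, each $A_n$ is a non-empty open annulus meeting $\J$ (it contains, for example, non-critical components of the first return map $\phi_{S_n+N_1}$, which carry positive $\nu$-mass by Lemma~\ref{lem:24xp,1}), so $\nu(A_n)>0$; since $(\diam\beta_{S_n+N_3})^{2-\alpha}$ is simultaneously bounded above and below over this finite range, the ratio $\nu(A_n)/(\diam\beta_{S_n+N_3})^{2-\alpha}$ is pinched between two positive constants depending on $M^*$. Absorbing these into the multiplicative constants completes the proof.

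The main obstacle is the uniformity in $n$: a priori $\nu(\beta_{S_{n+1}+N_3})$ could be nearly all of $\nu(\beta_{S_n+N_3})$, which would leave $A_n$ with negligible mass. This is precisely what the shrinking rate of Lemma~\ref{lem:26xp,1}, combined with $S_{n+1}-S_n\to\infty$, excludes; the roundness and bounded distortion inherited from the construction then turn this geometric gap into a measure-theoretic one via the conformality of $\nu$.
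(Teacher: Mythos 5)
Your upper bound is fine, and the reduction of the lower bound to showing that $\nu(\beta_{S_{n+1}+N_3})$ is at most a definite fraction of $\nu(\beta_{S_n+N_3})$ is a reasonable plan. The gap is in the one step that plan hinges on: you claim that Lemma~\ref{lem:26xp,1} together with $S_{n+1}-S_n\to\infty$ forces $\diam\beta_{S_{n+1}+N_3}/\diam\beta_{S_n+N_3}\to 0$. Lemma~\ref{lem:26xp,1} only gives $\log\bigl(\diam\beta_{S_n+N_1}\bigr)^{-1}=S_n\lambda+o_{M^*}(S_n)$, so for the ratio you get $\log\bigl(\diam\beta_{S_n+N_3}/\diam\beta_{S_{n+1}+N_3}\bigr)=\lambda(S_{n+1}-S_n)+o_{M^*}(S_{n+1})$, with an error of size $o(S_n)$ and of uncontrolled sign. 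Since the construction imposes $S_{n+1}/S_n\to 1$, the gap $S_{n+1}-S_n$ is itself $o(S_n)$, so the error term can completely swamp $\lambda(S_{n+1}-S_n)$; the cited estimate does not even determine the sign of the exponent, let alone send it to $+\infty$. As you yourself note, this is exactly the point where $\beta_{S_{n+1}+N_3}$ could a priori carry almost all of the mass of $\beta_{S_n+N_3}$, so without a valid replacement the lower bound collapses. (A fix along your lines would need a different input, e.g.\ the definite modulus of $A_n$ coming from the nesting $\beta_{S_{n+1}+N_1}\supset\beta_{S_{n+1}+N_3}$ together with $M^*$ large, or the Collet--Eckmann expansion $|Df^{S_{n+1}-S_n}|$ along the critical orbit -- but then you must also track how your constants $C_1(M^*),C_2(M^*)$ compare with the diameter ratio, which your write-up does not do.)

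The paper avoids this uniformity problem altogether by transporting the whole annulus to the fixed scale: $f^{S_n-1}$ is univalent on $\beta_{S_n+N_3}$ with distortion bounded in terms of $M^*$, and it maps $A_n$ onto $b_{N_3}\setminus\overline{P}$, where $P=f^{S_n}(b_{S_{n+1}+N_3})$ is a puzzle piece containing $f^{S_n}(0)\in b_{N_4}$ and hence contained in $b_{N_4}$. Thus $\nu\bigl(f^{S_n-1}(A_n)\bigr)$ is pinched between the fixed positive constant $\nu\bigl(b_{N_3}\setminus\overline{b}_{N_4}\bigr)$ and $\nu(b_{N_3})$, uniformly in $n$, and conformality of $\nu$ with the derivative comparable to $\diam b_{N_3}/\diam\beta_{S_n+N_3}$ gives both inequalities of the lemma in one stroke. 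If you want to keep your decomposition $\nu(A_n)=\nu(\beta_{S_n+N_3})-\nu(\beta_{S_{n+1}+N_3})$, you should replace the diameter-ratio argument with this picture at the fixed scale; as written, the lower bound is not justified.
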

\begin{proof}
This follows straight from the definition of the conformal measure
given that $f^{S_n-1}$ maps $\beta_{S_n+N_3}$ onto $b_{N_3}$ with
distortion bounded in terms of $M^*$.
\end{proof}

\begin{lem}\label{lem:26xa,1}
  For any $n>1$,
  \[ \sum_{j} \left( \diam \zeta_{n,j} \right)^{2-\alpha} < K_2(M^*)
  \left(\diam \beta_{S_n+N_3}\right)^{2-\alpha} .\]
\end{lem}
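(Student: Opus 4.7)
The plan is essentially an additivity argument: the two preceding lemmas are perfectly calibrated to yield this statement, since one gives a pointwise lower bound on $\nu(\zeta_{n,j})$ in terms of $(\diam\zeta_{n,j})^{2-\alpha}$, and the other gives a matching upper bound on $\nu(A_n)$ in terms of $(\diam\beta_{S_n+N_3})^{2-\alpha}$. The only substantive thing to check is that the components can be summed against $\nu(A_n)$, which requires pairwise disjointness and containment in $A_n$.

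First I would note that the $\zeta_{n,j}$ are pairwise disjoint, being distinct connected components of the domain of a single map $\phi_{S_n+N_1}$. By hypothesis (and by Lemma~\ref{lem:25xp,1}, which ensures that a component of that domain intersecting $A_n$ is in fact contained in $A_n$), the union $\bigcup_j \zeta_{n,j}$ lies inside $A_n$. Therefore
\[ \sum_j \nu(\zeta_{n,j}) \leq \nu(A_n).\]

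Next I would apply Lemma~\ref{lem:24xp,1} with $j=1$ to each component $\zeta_{n,j}$ to get
\[ K_1(M^*)\sum_j (\diam \zeta_{n,j})^{2-\alpha} \leq \sum_j \nu(\zeta_{n,j}) \leq \nu(A_n),\]
and finally invoke Lemma~\ref{lem:29xa,1} to bound $\nu(A_n)$ by a constant depending on $M^*$ times $(\diam\beta_{S_n+N_3})^{2-\alpha}$. Setting $K_2(M^*)$ to absorb $K_1(M^*)^{-1}$ and the comparability constant from Lemma~\ref{lem:29xa,1} yields the claim.

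There is essentially no obstacle here; the nontrivial work has already been done in establishing the two preceding lemmas (in particular, the roundness of the pieces $\beta_{S_n+N_j}$ and $b_{S_n+N_j}$ and the bounded-distortion univalent extension of $\phi_{S_n+N_1}$ from Proposition~\ref{prop:24xa,1}, which together give comparability of $\nu$-mass to diameter raised to the power $2-\alpha$). The only point I would double-check while writing the argument carefully is that Lemma~\ref{lem:24xp,1} as stated applies to the $\zeta_{n,j}$ contained in $A_n$ (which it does, since these are components of the domain of $\phi_{S_n+N_1}$), so no additional estimate is needed.
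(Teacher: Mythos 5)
Your proposal is correct and follows exactly the paper's argument: sum Lemma~\ref{lem:24xp,1} over the disjoint components $\zeta_{n,j}\subset A_n$ (with $j=1$ in the index of the piece) and compare with the bound on $\nu(A_n)$ from Lemma~\ref{lem:29xa,1}. The paper's own proof is precisely this two-line combination, so there is nothing to add.
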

\begin{proof}
  Summing up over $j$ and using Lemma~\ref{lem:24xp,1} together
  the estimate of $\nu(A_n)$ given by Lemma~\ref{lem:29xa,1}
 yields the claim.
\end{proof}

\begin{lem}\label{lem:29xa,2}
  For any $\epsilon>0$ and $M^*$ there is $n_0$ such that whenver
  $n\geq n_0$
  \[ \nu\left(b_{S_n+N_3}\setminus\overline{b}_{S_{n+1}+N_3}\right)
  \leq \left(\diam b_{S_{n+1}+N_3}\right)^{2-\alpha-\epsilon} \; .\]
\end{lem}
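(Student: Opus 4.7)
The plan is to reduce the claim to Lemma~\ref{lem:29xa,1} by pushing the annulus $B := b_{S_n+N_3}\setminus\overline{b}_{S_{n+1}+N_3}$ forward by $f$. Since $0\in b_{S_{n+1}+N_3}$, the set $B$ avoids the critical point, so $f|_B : B \to A_n = \beta_{S_n+N_3}\setminus\overline{\beta}_{S_{n+1}+N_3}$ is an unramified degree-$d$ cover. Cutting $B$ along a radial arc yields a decomposition $B = B_1\sqcup\cdots\sqcup B_d$ into sets on which $f$ is injective with $f(B_i) = A_n$ (the cuts have $\nu$-measure zero, since $\nu$ is non-atomic). Applying the conformality of $\nu$ on each $B_i$ and summing gives
\[ d\cdot \nu(A_n) = \int_B |f'(z)|^{2-\alpha}\, d\nu(z). \]

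Next, $|f'(z)| = d|z|^{d-1}$, and the roundness of $b_{S_{n+1}+N_3}$ about $0$ gives $|z| \geq c(M^*)\diam b_{S_{n+1}+N_3}$ for every $z \in B$, so
\[ \nu(B) \leq K(M^*)\,\nu(A_n)\,(\diam b_{S_{n+1}+N_3})^{-(d-1)(2-\alpha)}. \]
Using Lemma~\ref{lem:29xa,1} to replace $\nu(A_n)$ by $(\diam\beta_{S_n+N_3})^{2-\alpha}$ up to an $M^*$-constant, and invoking roundness once more to write $\diam\beta_{S_n+N_3} \overset{M^*}{\sim} (\diam b_{S_n+N_3})^d$, this bound reads
\[ \nu(B) \leq K'(M^*)\,(\diam b_{S_n+N_3})^{d(2-\alpha)}(\diam b_{S_{n+1}+N_3})^{-(d-1)(2-\alpha)}. \]

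The final step is to insert the asymptotic sizes of the critical pieces. Lemma~\ref{lem:26xp,1}, together with $\diam\beta_{S_n+N_3}\overset{M^*}{\sim}\diam\beta_{S_n+N_1}$ and the $d$-th root coming from the critical branch, yields $\log(1/\diam b_{S_n+N_3}) = S_n\lambda/d + o_{M^*}(S_n)$, and similarly at level $n+1$. Substituting and collecting terms,
\[ \log\nu(B) \leq -\tfrac{(2-\alpha)\lambda}{d}\bigl(dS_n - (d-1)S_{n+1}\bigr) + o_{M^*}(S_{n+1}). \]
Because $S_{n+1}/S_n \to 1$, the bracket equals $S_{n+1}(1+o(1))$, so $\log\nu(B) \leq -(2-\alpha)\lambda S_{n+1}/d + o_{M^*}(S_{n+1})$. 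Since $(2-\alpha-\epsilon)\log\diam b_{S_{n+1}+N_3} = -(2-\alpha-\epsilon)\lambda S_{n+1}/d + o_{M^*}(S_{n+1})$, the positive gap $\epsilon\lambda S_{n+1}/d$ dominates every $o_{M^*}(S_{n+1})$ correction once $n$ is sufficiently large, giving the claim.

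The main point to watch is the covering argument in the first step. It is essential that $B$ avoid the critical point, so that the $d$ local inverses of $f$ on $B$ provide a genuine unramified sheeting; and it is essential that $|z|$ be bounded below by the size of the \emph{inner} piece $b_{S_{n+1}+N_3}$, not the outer one. This asymmetry between the exponents $d(2-\alpha)$ on $\diam b_{S_n+N_3}$ and $-(d-1)(2-\alpha)$ on $\diam b_{S_{n+1}+N_3}$ is precisely what, under the near-equality $S_{n+1}\approx S_n$, collapses to exponent $2-\alpha$, leaving the slack $\epsilon$ for absorbing the sub-exponential error terms in the $o_{M^*}$ notation.
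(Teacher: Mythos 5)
Your argument is correct and essentially the paper's own: both push the annulus $b_{S_n+N_3}\setminus\overline{b}_{S_{n+1}+N_3}$ forward by the single iterate $f$, use $(2-\alpha)$-conformality of $\nu$ with $|f'|$ bounded below via roundness of the inner critical piece, invoke Lemma~\ref{lem:29xa,1} for $\nu(A_n)$, and absorb all error terms using $S_{n+1}/S_n\to 1$. The only cosmetic difference is that the paper packages the final comparison of consecutive scales through Lemma~\ref{lem:26xp,3}, whereas you substitute the asymptotics of Lemma~\ref{lem:26xp,1} directly (and note that for the covering step one only needs the inequality $\int_B |f'|^{2-\alpha}\,d\nu \le d\,\nu(A_n)$, so the measure of the cut is immaterial).
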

\begin{proof}
The absolute value of the derivative of $f^{-1}$ on $A_n$ is bounded
above by $L_1(M^*)
\left(\diam\beta_{S_{n+1}+N_3}\right)^{1-\frac{1}{d}}$. So,
\[  \nu\left(b_{S_n+N_3}\setminus\overline{b}_{S_{n+1}+N_3}\right)
\leq
L_2(M^*)\left(\diam\beta_{S_{n+1}+N_3}\right)^{(2-\alpha)(\frac{1}{d}-1)}\left(\diam
\beta_{S_n+N_3}\right)^{2-\alpha} \]
\[ \leq L_2(M^*)\left(\frac{\diam
  \beta_{S_n+N_3}}{\diam \beta_{S_{n+1}+N_3}}\right)^{2-\alpha}
\left(\diam b_{S_{n+1}+N_3}\right)^{{2-\alpha}} \; .\]
The first factor is bounded by $(\diam\beta_{S_n+N_3})^{-\epsilon}$ by
Lemma~\ref{lem:26xp,3}. The second was obtained from
$\diam b_{S_{n+1}+N_3} \overset{~M^*}{\sim}
\left(\diam\beta_{S_{n+1}+N_3}\right)^{1/d}$ by roundness.
This concludes the proof.
\end{proof}

\begin{lem}\label{lem:29xp,1}
For every $\epsilon>0$ and $M^*$ there is $n_0$ such that if $n\geq
n_0$, then
\[ \nu(b_{S_n+N_3}) \leq \left(\diam
b_{S_n+N_3}\right)^{2-\alpha-\epsilon} \; .\]
\end{lem}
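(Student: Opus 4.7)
The plan is to decompose $b_{S_n+N_3}$ telescopically into the annuli produced by Lemma~\ref{lem:29xa,2} and sum the resulting geometric-type series. Write $B_k=b_{S_k+N_3}$ for brevity. Because $\nu$ is non-atomic and supported on ${\cal J}$, while each $\partial B_k$ is a quasi-circle meeting ${\cal J}$ only at finitely many landing points of rays bounding Yoccoz pieces, we have $\nu(\partial B_k)=0$. Since $\diam B_k\to 0$ by Lemma~\ref{lem:26xp,1}, the nested intersection shrinks to the critical point $\{0\}$, which has $\nu$-mass zero. Hence
\[
\nu(B_n)=\sum_{k\geq n}\nu(B_k\setminus \overline{B}_{k+1}).
\]

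Next, I would apply Lemma~\ref{lem:29xa,2} with the auxiliary exponent $\eta:=\epsilon/2$ in place of $\epsilon$. For $n$ large enough in terms of $\eta$ and $M^*$, each summand is bounded by $(\diam B_{k+1})^{2-\alpha-\eta}$. Lemma~\ref{lem:26xp,1} combined with roundness gives $\log(\diam B_k)^{-1}=S_k\lambda/d+o_{M^*}(S_k)$, and since $\{S_k\}$ is a strictly increasing sequence of integers, $S_{k}-S_n\geq k-n$. These two facts together make the tail a convergent geometric series, whose total weight is comparable to its first term, namely of order $\exp\bigl(-(2-\alpha-\eta)S_{n+1}\lambda/d\,(1+o(1))\bigr)$.

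To close the argument I would compare with the target value $(\diam B_n)^{2-\alpha-\epsilon}\geq \exp\bigl(-(2-\alpha-\epsilon)S_n\lambda/d\,(1+o(1))\bigr)$. Because $S_{n+1}/S_n\to1$, the required inequality reduces, at the level of exponents, to $(2-\alpha-\eta)(1-\delta)>(2-\alpha-\epsilon)(1+\delta)$ for a fixed small $\delta>0$, and this holds for sufficiently small $\delta$ precisely because $\eta<\epsilon$.

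The main obstacle is the bookkeeping of small losses. Two near-one factors appear: the slack inherited from $S_{n+1}/S_n\to1$ rather than equalling $1$, and the multiplicative error $o_{M^*}(S_n)$ in the diameter asymptotic. A fixed gap $\epsilon-\eta>0$ provides just enough room to absorb both, which is why one cannot apply Lemma~\ref{lem:29xa,2} with the exponent $\epsilon$ directly. No further geometric input is needed; the rest is careful estimation inside the exponential.
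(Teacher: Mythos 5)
Your proposal is correct and follows essentially the same route as the paper: decompose $b_{S_n+N_3}$ into the annuli handled by Lemma~\ref{lem:29xa,2} applied with an auxiliary exponent strictly smaller than $\epsilon$, bound the resulting sum by a geometric series using the diameter asymptotics of Lemma~\ref{lem:26xp,1} (the paper invokes Proposition~\ref{prop:25xp,1} for the same purpose), and absorb the $o_{M^*}(S_n)$ and $S_{n+1}$-versus-$S_n$ losses into the gap $\epsilon-\epsilon_1$. Your explicit verification that boundaries of the pieces and the nested intersection are $\nu$-null is a detail the paper leaves implicit, but it does not change the argument.
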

\begin{proof}
  By Lemma~\ref{lem:29xa,2}, for any $\epsilon_1>0$ and $n\geq n_0(\epsilon_1)$ 
  \[ \nu(b_{S_n+N_3}) \leq \sum_{j=1}^{\infty}
  \left(\diam b_{S_{n+j}+N_3}\right)^{2-\alpha-\epsilon_1}\; . \]
  By Proposition~\ref{prop:25xp,1}, this can be further bounded from above,
  \[ \nu(b_{S_n+N_3}) \leq
  \sum_{j=1}^{\infty}
  \exp\left[(-S_{n+j}\frac{\lambda}{d}+o_{M^*}(S_n))(2-\alpha-\epsilon_1)\right] \]
  \[ \leq 
  \frac{\exp\left[(-S_{n+1}\frac{\lambda}{d}+o_{M^*}(S_n))(2-\alpha-\epsilon_1)\right]}{1-\exp(-\lambda/d)}\;, \]
  from estimating the sum of the geometric progression. By
  Lemma~\ref{lem:26xp,1}, we further obtain
  \[ \nu(b_{S_n+N_3}) \leq (1-e^{-\lambda/d})^{-1}\left(\diam
  b_{S_n+N_3}\right)^{2-\alpha-\epsilon_1}\exp(o_{M^*}(S_n)) \; .\]
  By choosing $\epsilon_1<\epsilon$ and  $n_0$ sufficiently
  large, we can absorb the constant and factor $\exp(o_{M^*}(S_n))$ into
  the form of the estimate of Lemma~\ref{lem:29xp,1}.
\end{proof}

\paragraph{Consequences for first entry maps.}
We will now use these results to obtain an estimate for the domains of
first entry maps. Recall how, by Proposition~\ref{prop:24xa,1}, on every
component of its domain the map $\phi_{S_n+N_3}$ has a univalent
extension onto $b_{S_{\sigma(n)+N_1}}$. By composing that with the
first entry map into $b_{S_{\sigma(n)}+N_3}$, one get a univalent
extension onto $b_{S_{\sigma(n)}+N_3}$ with a further continuation
onto $b_{S_{\sigma(n)}+N_1}$. 

Recall that $|\cdot|$ is used to denote $2$-dimensional Lebesgue measure of sets in $\C$.
\begin{prop}\label{prop:29xp,1}
Suppose that $X$ is a component of the domain of the first entry map
$f^r$ into
$b_{S_{\sigma(n)}+N_3}$ such that $f^r$ from $X$ 
continues univalently to map onto $b_{S_{\sigma(n)}+N_1}$. Let $X_n$
by the intersection of $X$ with the domain of 
$\phi_{S_n+N_1}$. Then for every  $n>1$,  
\[ \frac{|X_n|}{|X|} \leq \exp\left(-\frac{S_n}{2}\frac{\lambda\alpha}{2d}
+ o_{M^*}(S_n)\right) \; .\]
\end{prop}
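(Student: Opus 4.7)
The plan is to push the problem into $b_{S_{\sigma(n)}+N_3}$ via the univalent extension of $f^r$, replace areas by the conformal measure $\nu$ using the factor $(\diam)^{\alpha}$, and then cash in the resulting bounds from Proposition~\ref{prop:25xp,1}, Lemma~\ref{lem:29xp,1}, Lemma~\ref{lem:26xp,1}, and the definition of $\sigma(n)$.

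First I would use that $f^r$ maps $X$ onto $b_{S_{\sigma(n)}+N_3}$ and extends univalently to a map onto $b_{S_{\sigma(n)}+N_1}$. Because $b_{S_{\sigma(n)}+N_j}$ are $Q(M^*)$-balanced (roundness) and the two pieces are separated by an annulus of modulus bounded below in terms of $M^*$, Koebe's theorem yields distortion of $f^r$ on $X$ bounded by $Q(M^*)$. Writing $Y := f^r(X_n) = b_{S_{\sigma(n)}+N_3} \cap \mathrm{dom}(\phi_{S_n+N_1})$, this gives
\[ \frac{|X_n|}{|X|} \overset{~M^*}{\sim} \frac{|Y|}{|b_{S_{\sigma(n)}+N_3}|}\;. \]
Since Yoccoz pieces nest, each non-critical component $Y_j$ of $\mathrm{dom}(\phi_{S_n+N_1})$ that meets $b_{S_{\sigma(n)}+N_3}$ is contained in it, the critical component $b_{S_n+N_1}$ being contained too; hence $Y = \bigsqcup_j Y_j$.

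Next I would produce the Lebesgue$/\nu$ comparison on each $Y_j$. By an argument analogous to Lemma~\ref{lem:24xp,1}, the univalent map $f^{k_j}:Y_j\to b_{S_n+N_1}$ (with a bounded-distortion extension to the next scale) together with the roundness of $b_{S_n+N_1}$ gives
\[ |Y_j| \overset{~M^*}{\sim} (\diam Y_j)^{\alpha}\, \nu(Y_j)\;. \]
Summing and using disjointness together with Proposition~\ref{prop:25xp,1} to bound $\max_j \diam Y_j$,
\[ |Y| \leq K(M^*)\, \exp\!\left(-S_n\tfrac{\lambda\alpha}{d} + o_{M^*}(S_n)\right)\cdot \nu\!\left(b_{S_{\sigma(n)}+N_3}\right)\;. \]
Lemma~\ref{lem:29xp,1} bounds the last factor by $(\diam b_{S_{\sigma(n)}+N_3})^{2-\alpha-\epsilon}$, while roundness gives $|b_{S_{\sigma(n)}+N_3}|\overset{M^*}{\sim}(\diam b_{S_{\sigma(n)}+N_3})^{2}$; together they contribute $(\diam b_{S_{\sigma(n)}+N_3})^{-\alpha-\epsilon}$, and Lemma~\ref{lem:26xp,1} combined with roundness gives $\diam b_{S_{\sigma(n)}+N_3} = \exp(-S_{\sigma(n)}\lambda/d + o_{M^*}(S_{\sigma(n)}))$. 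Collecting,
\[ \frac{|X_n|}{|X|} \leq \exp\!\left(\frac{\lambda}{d}\bigl[-\alpha\, S_n + (\alpha+\epsilon)\, S_{\sigma(n)}\bigr] + o_{M^*}(S_n)\right)\;. \]

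Finally I would cash in the definition of $\sigma(n)$: since $S_{\sigma(n)-1} < S_{n+2}/2$ and the step ratio is bounded by $11/10$, we have $S_{\sigma(n)} \leq (11/20)\,S_{n+2}$, and $S_{n+2}/S_n \to 1$ forces $S_{\sigma(n)} \leq (11/20+o(1))\, S_n$, hence $S_n - S_{\sigma(n)} \geq S_n/4$ for $n$ large. With $\epsilon$ taken small enough that $\epsilon S_{\sigma(n)}$ is absorbed in $o_{M^*}(S_n)$, this yields $|X_n|/|X| \leq \exp(-\tfrac{S_n}{4}\cdot\tfrac{\lambda\alpha}{d}+o_{M^*}(S_n))$, which is stronger than the stated $\exp(-\tfrac{S_n}{2}\cdot\tfrac{\lambda\alpha}{2d}+o_{M^*}(S_n))$.

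The main obstacle is the step $|Y_j|\overset{M^*}{\sim}(\diam Y_j)^{\alpha}\nu(Y_j)$: one has to check that the univalent branch $f^{k_j}$ pulling $b_{S_n+N_1}$ back to $Y_j$ carries genuinely bounded distortion (by a modulus argument analogous to Proposition~\ref{prop:24xa,1}) so that both $|Y_j|/|b_{S_n+N_1}|$ and $\nu(Y_j)/\nu(b_{S_n+N_1})$ scale like appropriate powers of $|Df^{k_j}|^{-1}$, and that the $\epsilon$-loss from Lemma~\ref{lem:29xp,1} is safely controlled by the fixed positive gap $S_n - S_{\sigma(n)}$ coming from the definition of the predecessor function.
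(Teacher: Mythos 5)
Your argument is correct and is essentially the paper's own proof: both rest on the bounded distortion of $f^r$ on $X$ coming from the univalent extension onto $b_{S_{\sigma(n)}+N_1}$, on Lemma~\ref{lem:24xp,1} to convert area into conformal measure, on Proposition~\ref{prop:25xp,1} for $\sup_j\diam$, on Lemma~\ref{lem:29xp,1} and Lemma~\ref{lem:26xp,1} for $\nu$ and the diameters, and on the definition of $\sigma(n)$; the only (cosmetic) difference is that you push $X_n$ forward into $b_{S_{\sigma(n)}+N_3}$ while the paper pulls the measure estimates back to $X$. One small correction: your final exponent $\frac{S_n}{4}\cdot\frac{\lambda\alpha}{d}$ is exactly equal to the stated $\frac{S_n}{2}\cdot\frac{\lambda\alpha}{2d}$, not stronger, and the absorption of the $\epsilon S_{\sigma(n)}$ term really uses this factor-of-two slack (choosing $\epsilon$ small compared to $\alpha$) rather than being literally $o_{M^*}(S_n)$ -- the same slight abuse the paper itself makes.
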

\begin{proof}
  By Lemma~\ref{lem:26xp,1} and from the roundness of pieces,
  \begin{equation}\label{equ:29xp,3}
    \log \left(\diam b_{S_{\sigma(n)+N_3}}\right)^{-1} = S_{\sigma(n)}\frac{\lambda}{d}
    + o_{M^*}(S_{\sigma(n)})
   \end{equation} 
  while by Proposition~\ref{prop:25xp,1}, any component $\zeta$ of the first
  entry map $\phi_{S_n+N_1}$ satisfies
  \[ \log \left( \diam\zeta\right)^{-1} \geq
  S_n\frac{\lambda}{d}+o_{M^*}(S_n)\; .\]
  Hence
  \begin{equation}\label{equ:29xp,1} \log\frac{\diam b_{S_{\sigma(n)+N_3}}}{\diam\zeta} \geq
  (S_n-S_{\sigma(n)})\frac{\lambda}{d}+o_{M^*}(S_n) =  
    \frac{S_n}{2}\frac{\lambda}{d} + o_{M^*}(S_n)
    \end{equation}
  since from Definition~\ref{defi:28xa,1}, $S_{\sigma(n)} =
  \frac{S_N}{2}+o_{M^*}(S_n)$. 

  Now observe that $X$ is mapped onto $b_{S_{\sigma(n)}+N_3}$ with
  distortion bounded in terms of $M^*$, since the mapping extends
  univalently onto $b_{S_{\sigma(n)}+N_1}$.

  Thus, if $\zeta \subset X$, estimate~(\ref{equ:29xp,1}) yields
   \begin{equation}\label{equ:29xp,2} \log\frac{\diam X}{\diam\zeta} \geq
  \frac{S_n}{2}\frac{\lambda}{d} + o_{M^*}(S_n)\; .
    \end{equation}

   Using the same mapping of $X$ onto $b_{S_{\sigma(n)}+N_3}$ with
   bounded distortion, we conclude from Lemma~\ref{lem:29xp,1} that
   \[ \nu(X) \leq L_1(M^*) \left(\diam X\right)^{2-\alpha}\left(\diam
   b_{S_{\sigma(n)}+N_3}\right)^{-\epsilon_1}\]
   for every $\epsilon_1>0$ provided that $\sigma_n$ is large
   enough.
   
From Lemma~\ref{lem:24xp,1}
\begin{equation}\label{equ:29xp,4}
  \sum_{\zeta\subset X_n} \left(\diam\zeta\right)^{2-\alpha}
\end{equation}
\[   \leq
L_2(M^*)\nu(X) \leq L_1(M^*) L_2
(M^*) \left(\diam
X\right)^{2-\alpha} \left(\diam b_{S_{\sigma(n)}+N_3}\right)^{-\epsilon_1} \; .\]

Since $b_{S_{\sigma(n)}+N_3}$ was round and $X$ is its preimage with
bounded distortion,
\[ |X| \geq L_3(M^*) \left(\diam X\right)^2 \]
and $|\zeta| \leq L_4 \left(\diam\zeta\right)^2$,
from estimates~(\ref{equ:29xp,4}) and~(\ref{equ:29xp,1}) one obtains
\[ \frac{|X_n|}{|X|} \leq\frac{L_4}{L_3(M^*)}  \frac{ \sum_{\zeta\subset X_n}
  \left(\diam\zeta\right)^{2-\alpha}}{\left(\diam X\right)^{2-\alpha}} \frac{\sup\{
\left(\diam\zeta\right)^{\alpha} :\: \zeta\subset X_n\}}{\left(\diam
  X\right)^{\alpha}}\]
\[ \leq 
L_4(M^*)~ \left(\diam b_{S_{\sigma(n)}+N_3}\right)^{-\epsilon_1}\exp\left(-\alpha\frac{S_n}{2}\frac{\lambda}{d}+o_{M^*}(S_n)\right)
\; .\]

By Lemma~\ref{lem:26xp,1},
\[ \diam b_{S_{\sigma(n)+N_3}} =
\exp\left(-\sigma(n)\frac{\lambda}{d}+o_{M^*}(S_{\sigma(n)})\right) \]
which leads to
\[ \frac{|X_n|}{|X|} \leq 
L_4(M^*)~\exp\left(-\alpha\frac{S_n}{2}\frac{\lambda}{d} +
\epsilon_1S_{\sigma(n)}\frac{\lambda}{d}+o_{M^*}(S_n)\right) \;.\]

Since the constants and $\epsilon_1 S_{\sigma(n)}\frac{\lambda}{d}$
can be rolled into $o_{M^*}(S_n)$, Proposition~\ref{prop:29xp,1}
follows.
\end{proof}

\subsection{Deep point}
Recall that $\zeta_{n,j}$ denoted the connected components of the
domain of $\phi_{S_n+N_1}$. 
Define $Z_n = \bigcup_{j} \zeta_{n,j}$ and $Z=\bigcup_n Z_n$.

\begin{theo}\label{theo:24xp,1}
  For any $\epsilon>0$ and if $M^*$ is sufficiently large
  \[ \lim_{r\rightarrow 0^+} \frac{|Z\cap
    \D(c_0,r)|}{r^{2+\frac{\alpha}{d}-\epsilon}}=0 
   . \]
\end{theo}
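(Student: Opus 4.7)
My approach is to bound the Lebesgue measure of $Z$ on each annulus $A_n$ separately and then sum over those $n$ whose annulus can intersect $\D(c_0,r)$.

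The first step is the a~priori bound
\[ |Z_n|=\sum_{j}|\zeta_{n,j}|\;\leq\; C(M^{*},\epsilon)\,\bigl(\diam \beta_{S_n+N_3}\bigr)^{\,2+\tfrac{\alpha}{d}-\alpha\epsilon}, \]
valid for every $\epsilon>0$ and $n$ large enough. Since $|\zeta_{n,j}|\lesssim(\diam\zeta_{n,j})^2$, splitting $(\diam\zeta_{n,j})^{2}=(\diam\zeta_{n,j})^{2-\alpha}(\diam\zeta_{n,j})^{\alpha}$ gives $|Z_n|\lesssim(\sup_{j}\diam\zeta_{n,j})^{\alpha}\sum_{j}(\diam\zeta_{n,j})^{2-\alpha}$. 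Lemma~\ref{lem:26xp,2} bounds the supremum by $(\diam\beta_{S_n+N_3})^{\alpha(1+\tfrac{1}{d}-\epsilon)}$ and Lemma~\ref{lem:26xa,1} bounds the sum by $K_2(M^{*})(\diam\beta_{S_n+N_3})^{2-\alpha}$; adding exponents yields the claimed power $2+\alpha/d-\alpha\epsilon$.

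Next, I would localize the contributing annuli. Because each $\beta_{S_m+N_3}$ is $K(M^{*})$-balanced around $c_0$, it contains $\D(c_0,\tfrac{\diam\beta_{S_m+N_3}}{2K(M^{*})})$. Letting $n_0=n_0(r)$ be the smallest index with $\diam\beta_{S_{n_0}+N_3}\leq 2K(M^{*})r$, for every $m$ with $m+1<n_0$ the disc $\D(c_0,r)$ lies inside $\beta_{S_{m+1}+N_3}$, which is disjoint from $A_m\supset Z_m$. Hence only $m\geq n_0-1$ contributes, and
\[ |Z\cap \D(c_0,r)|\;\leq\;\sum_{m\geq n_0-1}|Z_m|\;\leq\; C\sum_{m\geq n_0-1}\bigl(\diam\beta_{S_m+N_3}\bigr)^{2+\tfrac{\alpha}{d}-\alpha\epsilon}. \]
Since $(S_m)$ is strictly increasing among integers, $S_m\geq S_{n_0-1}+(m-(n_0-1))$, and by Lemma~\ref{lem:26xp,1} one has $\diam\beta_{S_m+N_3}\leq e^{-S_m(\lambda-\delta)}$ for any $\delta>0$ and $m$ large. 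The tail is then controlled by a geometric series with ratio $e^{-(\lambda-\delta)(2+\alpha/d-\alpha\epsilon)}$ and is at most $C'\bigl(\diam\beta_{S_{n_0-1}+N_3}\bigr)^{2+\tfrac{\alpha}{d}-\alpha\epsilon-\delta'}$ for a small $\delta'>0$ determined by $\delta$.

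The final step compares $\diam\beta_{S_{n_0-1}+N_3}$ with $r$: by Lemma~\ref{lem:26xp,1} together with the construction hypothesis $S_{n_0}/S_{n_0-1}\to 1$,
\[ -\log \diam\beta_{S_{n_0-1}+N_3} \;=\; S_{n_0-1}\lambda+o_{M^{*}}(S_{n_0-1}) \;=\; S_{n_0}\lambda+o(S_{n_0}) \;=\; -\log r+o(|\log r|), \]
so $\diam\beta_{S_{n_0-1}+N_3}\leq r^{1-o_r(1)}$. For any prescribed $\epsilon>0$ in the statement, one chooses $\epsilon',\delta,\delta'$ so small that the total loss $\alpha\epsilon'+\delta'+(2+\alpha/d)\,o_r(1)<\epsilon/2$ for $r$ small; this gives $|Z\cap \D(c_0,r)|\leq r^{2+\alpha/d-\epsilon/2}$ and hence $|Z\cap\D(c_0,r)|/r^{2+\alpha/d-\epsilon}\leq r^{\epsilon/2}\to 0$. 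The main obstacle is precisely this last comparison: the spacings $S_{n+1}-S_n$ grow to infinity while $S_{n+1}/S_n\to 1$, so the diameter of the first contributing piece $\beta_{S_{n_0-1}+N_3}$ can exceed $r$ by a subpolynomial factor $r^{-o(1)}$, which is exactly what enforces the $\epsilon$-loss in the exponent rather than a clean density of order $r^{2+\alpha/d}$.
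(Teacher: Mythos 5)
Your proposal is correct and follows essentially the same route as the paper: your first step (splitting $(\diam\zeta_{n,j})^2=(\diam\zeta_{n,j})^{2-\alpha}(\diam\zeta_{n,j})^{\alpha}$ and invoking Lemma~\ref{lem:26xa,1} and Lemma~\ref{lem:26xp,2} to get $|Z_n|\lesssim(\diam\beta_{S_n+N_3})^{2+\frac{\alpha}{d}-\alpha\epsilon}$) is exactly the paper's Step~I. Your localization and summation differ from the paper's Step~II only in bookkeeping (the paper uses the fact that a circle about $c_0$ meets at most two annuli $A_n$ together with Lemma~\ref{lem:26xp,3}, while you use balancedness, the geometric tail via strict monotonicity of $S_m$, and Lemma~\ref{lem:26xp,1} with $S_{n+1}/S_n\to 1$), and both arguments hinge on the same subpolynomial comparison $\diam\beta_{S_{n_0-1}+N_3}\leq r^{1-o(1)}$ that produces the $\epsilon$-loss.
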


\paragraph{Step I.}
We show that for any $M^*$ and $\epsilon>0$
there is $n_0$ such that if $n\geq n_0$ then
\[ \frac{|Z_n|}{(\diam \beta_{S_n+N_3})^2} \leq
  \left(\diam(\beta_{S_n+N_3})\right)^{\frac{\alpha}{d}-\epsilon} \; .\]

\[ |Z_n| \leq L \sum_{j} \left(\diam\zeta_{n,j}\right)^2 \leq \left[
  \sum_{j} \left(\diam\zeta_{n,j}\right)^{2-\alpha} \right] \cdot
\left[ \sup_j \{ \diam \zeta_{n,j} \} \right]^{\alpha} \; .\]
The first factor on the right-hand side can be bounded by
Lemma~\ref{lem:26xa,1} and the second by Lemma~\ref{lem:26xp,2}
leading to
\[ |Z_n| \leq \left( \diam b_{S_n+N_3} \right)^{2-\alpha}\left(
  \diam b_{S_n+N_3}\right)^{\alpha(1+\frac{1}{d}-\epsilon)} \]
provided $n$ is large enough. 
Finally,
\[ |Z_n| \leq \left( \diam b_{S_n+N_3}\right)^{2 + \frac{\alpha}{d} -
  \alpha\epsilon} \; .\] 
By dividing both sides by $(\diam \beta_{S-n+N_3})^2$ and taking
into account $\alpha\leq 1$, we get the claim of Step I.

\paragraph{Step II.}
We will prove that for every $\epsilon>0$ and if $M^*$ is large
enough, there is $r_0$ so that if $0<r<r_0$, then
\[ \frac{|Z\cap \D(c_0,r)|}{|\D(c_0,r)|} < r^{\frac{\alpha}{d}-\epsilon}
\; .\]

When $M^*$ is sufficiently large, any circle centered at $c_0$
intersects at most two of the annuli $A_n$.
So pick $r>0$ and choose the largest $n(r)$ for which $A_{n(r)}
\subset \D(c_0,r)$. There are at most two annuli $A_{n(r)-1}$ and
$A_{n(r)-2}$ which also intersect $\D(c_0,r)$. By Lemma~\ref{lem:26xp,3} for
any $\epsilon_1>0$ and if $n(r)$ is large enough depending on $\epsilon_1, M^*$, 
$\diam \beta_{S_{n(r)-2}+N_3} \leq r^{1-\epsilon_1}$. Inserting this
into the estimate of Step I stated for some $\epsilon_2>0$ we get that
for $n(r)$ large enough and for $j=1,2$

\begin{equation}\label{equ:26xp,3}
  \frac{|Z_{n(r)-j}|}{r^{2}} \leq
        r^{(1-\epsilon_1)(\frac{\alpha}{d}-\epsilon_2)+2\epsilon_1} \; .
\end{equation}

By the Step I, for every $n\geq n(r)$ and any $\epsilon_3>0$
\begin{equation}\label{equ:26xp,2} \frac{|Z_{n}|}{|\D(c_0,r)|} \leq
L~\frac{\left(\diam\beta_{S_n+N_3}\right)^2}{r^2}
\left(\diam\beta_{S_n+N_3}\right)^{\frac{\alpha}{d}-\epsilon_3}\;,
\end{equation}
where $L$ is a geometric constant.  By Lemma~\ref{lem:26xp,1},
\[ \frac{\left(\diam\beta_{S_n+N_3}\right)^2}{r^2} \leq
d^{S_{n(r)}-S_n} (\diam\beta_{S_n+N_3})^{-\epsilon_4} \]
for any $\epsilon_4>0$ provided $n(r)$ is large enough.
By summing up the estimates~(\ref{equ:26xp,2}) for $n\geq n(r)$, we get
\[ \frac{|\bigcup_{n\geq n(r)} Z_n|}{|\D(c_0,r)|} \leq \frac{Ld}{d-1}
r^{\frac{\alpha}{d}-\epsilon_3-\epsilon_4} \; .\]
By picking $\epsilon_3,\epsilon_4$ as well as $\epsilon_1,\epsilon_2$
in estimate~(\ref{equ:26xp,3}) suitably small for the desired
$\epsilon$
and making $r$ small enough to
produce $n(r)$ correspondingly large to absorb the constants, we get the claim
of Step II.  

Theorem~\ref{theo:24xp,1} follows directly from Step II. 
\subsection{Estimates on the ray.}
Let $\gamma$ denote the external ray of ${\cal J}_{c_0}$ which
lands at $c_0$. Use notations $Z$ and $Z_n$ from the previous
section and let $\chi_Z$, etc, by the indicator functions.
$\lambda_2$ is the $2$-dimensional  Lebesgue measure of the plane. 

\begin{theo}\label{theo:30xa,1}
There exist a bound $o(R)$, $\lim_{R\rightarrow 0^+} o(R)=0$, and $R_0>0$ such
that for every $z_0 \in {\gamma}$ and if $|z_0-c_0|<R_0$
\[ \int_{D(z_0,R)} \frac{\chi_Z(w)\, d\lambda_2(w)}{|z_0-w|^2} \leq
o(R) \; .\]
\end{theo}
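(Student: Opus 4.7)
The plan is to decompose $Z=\bigcup_n Z_n$ according to the annular stratification $A_n=\beta_{S_n+N_3}\setminus\overline{\beta}_{S_{n+1}+N_3}$ around $c_0$ and to combine the deep-point estimate of Theorem~\ref{theo:24xp,1} with the disjointness of the external ray $\gamma$ from the enlarged Yoccoz pieces $\tilde\zeta_{n,k}$ of Corollary~\ref{coro:24xa,1}. Writing $d_n=\diam\beta_{S_n+N_3}$ and letting $n_0=n_0(z_0)$ be the unique index with $z_0\in\overline{A_{n_0}}$, roundness gives $|z_0-c_0|\sim d_{n_0}$ up to constants depending on $M^*$.

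The first reduction is to split the integration disk along $|w-c_0|=2|z_0-c_0|$. On the outer region $|w-c_0|\ge 2|z_0-c_0|$ the elementary bound $|z_0-w|\ge|w-c_0|/2$ yields
\[
\int_{\{|w-c_0|\ge 2|z_0-c_0|\}\cap D(z_0,R)}\frac{\chi_Z(w)}{|z_0-w|^2}\,d\lambda_2(w)\le 4\int_{D(c_0,R+|z_0-c_0|)}\frac{\chi_Z(w)}{|w-c_0|^2}\,d\lambda_2(w),
\]
and a standard dyadic decomposition around $c_0$ combined with Theorem~\ref{theo:24xp,1} bounds the right-hand side by $C(R+|z_0-c_0|)^{\alpha/d-\epsilon}$. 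On the complementary inner region I decompose $Z=\bigcup_n Z_n$ and treat first the deep annuli $n\ge n_0+C$ (with $C=C(M^*)$ large): here $A_n\subset D(c_0,d_n)$ with $d_n\ll|z_0-c_0|$ forces $\dist(z_0,A_n)\gtrsim|z_0-c_0|$, so summing produces $|z_0-c_0|^{-2}|Z\cap D(c_0,d_{n_0+C})|\lesssim |z_0-c_0|^{\alpha/d-\epsilon}$ by a second application of Theorem~\ref{theo:24xp,1}.

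The core of the argument concerns the finite range $n\in[n_0-C,n_0+C]$ of intermediate scales with $d_n$ comparable to $|z_0-c_0|$; only finitely many indices qualify by the rapid shrinking in Lemma~\ref{lem:26xp,1}. For each such $n$ and each component $\zeta_{n,k}$ of $\phi_{S_n+N_1}$ inside $A_n$, Proposition~\ref{prop:24xa,1} supplies a univalent-extension domain $\tilde\zeta_{n,k}\supset\zeta_{n,k}$ mapped onto $b_{S_{\sigma(n)}+N_1}$, and Corollary~\ref{coro:24xa,1} gives $\tilde\zeta_{n,k}\cap\gamma=\emptyset$, so in particular $z_0\notin\tilde\zeta_{n,k}$. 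Koebe distortion applied to this univalent branch, together with the balanced-roundness of $b_{S_{\sigma(n)}+N_1}$, upgrades the qualitative disjointness to a quantitative separation $\dist(z_0,\zeta_{n,k})\gtrsim\diam\tilde\zeta_{n,k}$, since $\zeta_{n,k}$ corresponds under $f^r$ to the buried piece $b_{S_n+N_3}\subset b_{S_{\sigma(n)}+N_1}$. Pairing this with Proposition~\ref{prop:29xp,1}, which bounds $|\tilde\zeta_{n,k}\cap\{\text{domain of }\phi_{S_n+N_1}\}|/|\tilde\zeta_{n,k}|$ by $\exp(-S_n\lambda\alpha/(4d)+o_{M^*}(S_n))$, and summing via Lemma~\ref{lem:26xa,1}, the intermediate contribution becomes exponentially small in $S_{n_0}$, hence super-polynomially small in $|z_0-c_0|^{-1}$.

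Combining the three contributions yields $\int_{D(z_0,R)}\chi_Z(w)/|z_0-w|^2\,d\lambda_2(w)\le C(R+|z_0-c_0|)^{\alpha/d-\epsilon}$ plus an exponentially smaller correction. By choosing $R_0$ small and distinguishing the cases $|z_0-c_0|\le R$ (which directly gives $O(R^{\alpha/d-\epsilon})$) and $|z_0-c_0|>R$ (where $D(z_0,R)$ fails to reach the scale of $c_0$ and the same argument sharpens because only a thin circular sliver near $|w-c_0|\sim|z_0-c_0|$ is accessible), one extracts a uniform bound $o(R)\to 0$. The main obstacle is the intermediate-scale estimate: converting the qualitative disjointness $\tilde\zeta_{n,k}\cap\gamma=\emptyset$ into the quantitative separation $\dist(z_0,\zeta_{n,k})\gtrsim\diam\tilde\zeta_{n,k}$ is the crux, and it is precisely to enable this step that the balanced-roundness and univalent-extension machinery of Section~2 was built.
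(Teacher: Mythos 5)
There is a genuine gap, and it sits exactly at the point you yourself flag as the crux: controlling the contribution of $Z_n$ near $z_0$. Your quantitative separation $\dist(z_0,\zeta_{n,k})\gtrsim\diam\tilde\zeta_{n,k}$ (essentially the paper's Lemma~\ref{lem:30xa,1}) is only a \emph{per-component} statement: it bounds the ratio of distance to size, but it does not prevent infinitely many extension domains, of rapidly shrinking size, from accumulating toward ${\cal J}$ at distances from $z_0$ ranging all the way down to $\dist(z_0,{\cal J})$. Each extension domain contributes $\lesssim\exp(-cS_n)$ by Proposition~\ref{prop:29xp,1}, but you must sum over all of them, and "summing via Lemma~\ref{lem:26xa,1}" does not close this: that lemma compares $\sum_j(\diam\zeta_{n,j})^{2-\alpha}$ to $(\diam\beta_{S_n+N_3})^{2-\alpha}$, whereas the quantity you need is $\sum_j|\zeta_{n,j}|/\dist(z_0,\zeta_{n,j})^2$, in which the distances can be smaller than $\diam\beta_{S_n+N_3}$ by an unbounded factor. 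In effect one needs to bound the number of dyadic scales around $z_0$ that $\tilde Z_n$ can occupy, i.e.\ a \emph{lower} bound on $\dist(z_0,\tilde Z_n)$ which is comparable, in logarithmic terms, to $\diam\beta_{S_n+N_3}$. This is the paper's Step I (the annulus confinement $\rho_1<|w-z_0|<\rho_2$ with $\log(\rho_2/\rho_1)\leq\epsilon(n)\log(\diam\beta_{S_n+N_3})^{-1}$), and its proof requires an input absent from your proposal: the asymptotic Lipschitz accessibility of the ray, $D\bigl(z_0,|z_0-c_0|^{1+o(|z_0-c_0|)}\bigr)\cap{\cal J}=\emptyset$ (Fact~\ref{fa:30xa,1}, quoted from~\cite{gsarkiv}), together with a separate modulus argument when $z_0$ lies deep inside $\beta_{S_{n+2}+N_3}$. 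Without some such ingredient the per-$n$ contribution is not shown to be summable with a tail uniform along $\gamma$. Note also that your anchoring claim $|z_0-c_0|\sim\diam\beta_{S_{n_0}+N_3}$ is false: by roundness $|z_0-c_0|$ can be anywhere between (roughly) $\diam\beta_{S_{n_0+1}+N_3}$ and $\diam\beta_{S_{n_0}+N_3}$, a range of unbounded ratio, so the reduction to a bounded window $n\in[n_0-C,n_0+C]$ of "comparable" scales is not justified as stated.

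The second gap concerns uniformity in $z_0$, i.e.\ producing a bound depending on $R$ alone. In the regime $R\ll|z_0-c_0|<R_0$ (with $R_0$ fixed), your outer/deep estimates yield $C\,|z_0-c_0|^{\alpha/d-\epsilon}$, which does not tend to $0$ as $R\to0$, and the deep-point Theorem~\ref{theo:24xp,1}, being centered at $c_0$, gives no decay in $R$ for a disk $D(z_0,R)$ whose distance to $c_0$ is much larger than its radius; the remark that "the same argument sharpens because only a thin circular sliver is accessible" is not an argument, since the integrand's singularity is at $z_0$, not at $c_0$. The paper resolves this by a different mechanism: for each fixed $n$, $\dist(\tilde Z_n,\gamma)>0$ (Step IV), so for small $R$ the disk $D(z_0,R)$ meets only the levels $n\geq n(R)$ with $n(R)\to\infty$; and each level's contribution is estimated over the whole plane, uniformly in $z_0\in\gamma$ (Step III, of size $\epsilon(n)S_n\exp(-\frac{S_n}{2}\frac{\lambda\alpha}{2d}+o_{M^*}(S_n))$), so the integral is dominated by the tail of a convergent series indexed by $n(R)$. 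Your decomposition by the location of $z_0$ relative to $c_0$ cannot substitute for this, because it never converts smallness of $R$ into depth of the levels of $Z$ that are visible from $z_0$. In short: the architecture (extension domains, disjointness from the ray, the density bound of Proposition~\ref{prop:29xp,1}) is right, but the two load-bearing steps — the logarithmic annulus confinement via Lipschitz accessibility of the ray, and the uniform per-level whole-plane bound combined with $\dist(\tilde Z_n,\gamma)>0$ — are missing, and without them the proof does not go through.
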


Let us start with a lemma.
\begin{lem}\label{lem:30xa,1}
Consider a domain $X$ as in the statement of
Proposition~\ref{prop:29xp,1} for $n>1$ and such that it intersects
the domain of $\phi_{S_n+N_1}$. There is a constant
$0<K(M^*)$ so that for every such $X,n$ and $z_0\in{\gamma}$   
\[ \dist(z_0,\overline{X}) \geq K(M^*)~ \diam X \; .\]
\end{lem}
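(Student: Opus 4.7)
The plan is to reduce the lemma to two ingredients: a univalent extension $\tilde X\supset X$ produced by Proposition~\ref{prop:24xa,1}, and the fact that the external ray $\gamma$ avoids $\tilde X$. Once these are in place, the estimate follows from a standard Koebe/modulus argument.

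Fix $X$ as in the hypothesis and let $r$ be the first entry time into $b_{S_{\sigma(n)}+N_3}$, so that the branch $f_{c_0}^{r}|_{X}\to b_{S_{\sigma(n)}+N_3}$ extends univalently to a larger domain $\tilde X$ with $f_{c_0}^{r}(\tilde X)=b_{S_{\sigma(n)}+N_1}$. The delicate point — and the one I expect to be the main obstacle — is showing that $\gamma\cap\tilde X=\emptyset$; this is the analogue of Corollary~\ref{coro:24xa,1} in the present setting. The hypothesis that $X$ meets the domain of $\phi_{S_n+N_1}$ is used to extract some $\zeta_{n,j}\subset X$, and since the components of $\phi_{S_n+N_1}$ relevant to the sums in Theorem~\ref{theo:24xp,1} lie in $A_n$ and are therefore disjoint from $\beta_{S_{n+1}+N_3}\ni c_0$, one follows the forward orbit $\tilde X,\,f_{c_0}(\tilde X),\dots,\,b_{S_{\sigma(n)}+N_1}$ and uses univalence of $f_{c_0}^{r}|_{\tilde X}$ together with Lemma~\ref{lem:24xa,1} to rule out the possibility that any intermediate piece coincides with a critical-value piece $\beta_{S_k+N_j}$. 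In particular, $c_0\notin\overline{\tilde X}$. Because $\tilde X$ is a Yoccoz puzzle piece whose boundary consists of external rays together with an equipotential arc, and $\gamma$ is a gradient line of $G_{c_0}$ landing at $c_0\notin\overline{\tilde X}$, distinct rays cannot cross, so $\gamma$ cannot enter $\tilde X$.

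Once this separation is in hand, the distance estimate is routine. The nesting condition $\mod\bigl(b_{N_j}\setminus\overline{b}_{N_{j-1}}\bigr)\geq M^*$ for $j=1,2,3$ combined with superadditivity of moduli and univalent pull-back via $f_{c_0}^{S_{\sigma(n)}}$ yields $\mod\bigl(b_{S_{\sigma(n)}+N_1}\setminus\overline{b}_{S_{\sigma(n)}+N_3}\bigr)\geq 2M^*$. Together with the $Q(M^*)$-roundness of $b_{S_{\sigma(n)}+N_3}$, Teichm\"uller's modulus estimate gives
\[
\dist\bigl(\partial b_{S_{\sigma(n)}+N_1},\,\overline{b}_{S_{\sigma(n)}+N_3}\bigr)\geq c_1(M^*)\,\diam b_{S_{\sigma(n)}+N_3}.
\]
Applying Koebe's distortion theorem to the univalent branch $(f_{c_0}^{r}|_{\tilde X})^{-1}$ defined on the quasi-disc $b_{S_{\sigma(n)}+N_1}$ transfers this inequality to
\[
\dist(\partial\tilde X,\overline X)\geq K(M^*)\,\diam X,
\]
with $K(M^*)>0$ depending only on $M^*$. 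For any $z_0\in\gamma$, the previous paragraph gives $z_0\in\C\setminus\tilde X$, and hence $\dist(z_0,\overline X)\geq \dist(\partial\tilde X,\overline X)\geq K(M^*)\,\diam X$, which is the claim.
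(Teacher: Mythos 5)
Your route is essentially the paper's: the univalent extension $\tilde X$ onto $b_{S_{\sigma(n)}+N_1}$, the disjointness of that extension domain from $\gamma$ --- which is exactly Corollary~\ref{coro:24xa,1} (resting on Lemma~\ref{lem:25xp,1}) and so can simply be cited rather than re-derived by your somewhat sketchy forward-orbit argument --- and a modulus bound for the separating annulus $\tilde X\setminus\overline{X}$, to which the paper applies Teichm\"uller's theorem directly to get $\dist(z_0,\overline{X})\geq K(M^*)\,\diam X$. Two harmless slips in your write-up: $f^{S_{\sigma(n)}}$ is unicritical of degree $d$ on the critical piece, so the pulled-back modulus is $\geq 2M^*/d$, not $2M^*$; and Koebe distortion does not apply verbatim to a branch defined only on the quasi-disc $b_{S_{\sigma(n)}+N_1}$ --- the conformal invariance of $\mod\left(\tilde X\setminus\overline{X}\right)$ together with the Teichm\"uller modulus estimate is what actually carries the transfer.
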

\begin{proof}
By hypothesis, $X$ is surrounded by an extension domain which is mapped univalently
onto $b_{S_{\sigma(n)}+N_1}$ and by Corollary~\ref{coro:24xa,1}, the
extension domain does not contain $z_0$. Hence, we have an annulus $A$
which is conformally equivalent to
$b_{S_{\sigma(n)}+N_1}\setminus\overline{b}_{S_{\sigma(n)}+N_3}$ which
  contains $X$
  in the bounded component of its complement leaving $z_0$ in the
  unbounded one. From our construction, $\mod A \geq \frac{2}{d} M^*$
  so the claim follows by Teichm\"{u}ller's estimates, see~\cite{lehvi}.
\end{proof}

We will now present the proof in a sequence of steps.
\paragraph{Step I.}
Recall that for $n>1$ the first entry mapping $\phi_{S_n+N_1}$ has a
univalent extension from every
component of its domain which maps onto $b_{S_{\sigma(n)+N_3}}$ and
whose domain is contained in $A_n$. Let us denote the union of the
domains of such extensions by $\tilde{Z}_n$. We have $Z_n \subset
\tilde{Z}_n \subset A_n$ for each $n$. 

\begin{lem}\label{lem:I}
There exist a positive sequence $\epsilon(n)$ with
$\lim_{n\rightarrow\infty}\epsilon(n)=0$ and $n_0$ such that for every
  $n\geq n_0$ 
  and $z_0\in{\gamma}$ there are $0<\rho_1<\rho_2$ so that
  \[ \tilde{Z}_n \subset \{ z :\: \rho_1 < |z-z_0| <
  \rho_2 \}\]
  with
  \[ \log\frac{\rho_2}{\rho_1} \leq
  \epsilon(n)\log\left(\diam\beta_{S_n+N_3}\right)^{-1} \; .\]
\end{lem}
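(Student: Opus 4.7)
}
The plan is to do a case analysis based on where $z_0$ sits in the nested annular decomposition. Write $r_m := \diam\beta_{S_m+N_3}$; by Lemma~\ref{lem:26xp,1} we have $\log(1/r_m) = S_m\lambda + o_{M^*}(S_m)$, and together with the hypothesis $S_{m+1}/S_m\to 1$ this gives $\log(r_m/r_{m+k}) = o(\log(1/r_m))$ uniformly over bounded $k$. Since each critical piece $\beta_{S_m+N_3}$ is $K(M^*)$-balanced about $c_0$, it is sandwiched between concentric Euclidean disks of comparable radii; hence whenever $z_0\in\gamma$ is close enough to $c_0$, there is a unique $n^*=n^*(z_0)$ with $z_0\in A_{n^*}$.

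The non-adjacent case $|n-n^*|\geq 2$ is handled by elementary geometry. Here $\tilde Z_n\subset A_n\subset\beta_{S_n+N_3}$ and $z_0$ lie in disjoint annular shells separated by at least one full piece. Using the roundness inclusions $D(c_0,r_m/(2K))\subset\beta_{S_m+N_3}\subset D(c_0,r_m)$, the triangle inequality forces $|w-z_0|$ for $w\in\tilde Z_n$ into an interval whose endpoints differ by a factor $O(r_{m}/r_{m+1})$ for $m$ equal to the smaller of $n,n^*$ (plus a constant). By the remark above, the $\log$ of this factor is $o(\log(1/r_n))$, which yields an admissible $\epsilon(n)$.

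The delicate case is the adjacent one, $|n-n^*|\leq 1$, where $z_0$ lies in or immediately next to the annulus $A_n$ containing $\tilde Z_n$. I would apply Lemma~\ref{lem:30xa,1} componentwise: for each component $X$ of $\tilde Z_n$, $\dist(z_0,\overline X)\geq K(M^*)\diam X$, so inside a single $X$ the ratio of $|w-z_0|$ values is bounded by $1+K(M^*)^{-1}$. Setting $\rho_1=\min_X\dist(z_0,X)$ and $\rho_2\leq (1+K(M^*)^{-1})\max_X\dist(z_0,X)$, one has $\rho_2\leq C(M^*)r_n$ from $\tilde Z_n\subset A_n$, and Lemma~\ref{lem:30xa,1} gives $\rho_1\geq K(M^*)\min_X\diam X$. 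To lower-bound $\min_X\diam X$, I would use Proposition~\ref{prop:24xa,1} with Koebe applied to the univalent extension $f^{r_X}:\hat X\to b_{S_{\sigma(n)}+N_1}$ (which has an annular collar of modulus bounded below by $M^*$-dependent constants) together with the bounded-distortion estimate $|(f^{S_n-1})'|\asymp r_n^{-1}$ on $\beta_{S_n+N_3}$ and the nesting bound $r_X\in[S_n-S_{\sigma(n)},S_n]$ coming from $X\subset\beta_{S_n+N_3}$ and $b_{S_n+N_1}\subset b_{S_{\sigma(n)}+N_3}$. This pins $\diam X$ to the scale $r_n\cdot\diam b_{S_{\sigma(n)}+N_3}/D_{r_X}$ for $D_{r_X}=\diam f^{r_X}(\beta_{S_n+N_3})$, and then the asymptotics $S_{\sigma(n)}/S_n\to 1/2$, $S_{n+1}/S_n\to 1$ absorb the relevant powers of $r_{\sigma(n)}^{1/d}$ and $D_{r_X}$ into the $o_{M^*}(S_n)$ correction, giving $\log(\rho_2/\rho_1)=o(\log(1/r_n))$ for $n$ large.

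The main obstacle is precisely this last step: components $X$ of $\tilde Z_n$ can have diameters ranging over a wide span of powers of $r_n$, and a naive application of Lemma~\ref{lem:30xa,1} to the smallest $X$ yields only an $\epsilon(n)$ bounded away from $0$. Extracting genuine decay $\epsilon(n)\to 0$ requires combining the distortion control coming from the univalent extensions to $b_{S_{\sigma(n)}+N_1}$, the lower bound $r_X\geq S_n-S_{\sigma(n)}$ forced by nesting, and the careful use of $S_{\sigma(n)}\sim S_n/2$ so that the exponent deficit is swallowed by the $o_{M^*}(S_n)$ term of Lemma~\ref{lem:26xp,1}.
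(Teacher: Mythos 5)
Your treatment of the non-adjacent case is essentially sound and reproduces the elementary parts of the paper's argument (the ``far away'' case and the case $z_0\in\beta_{S_{n+2}+N_3}$, where a separating annulus of modulus $\geq M^*/d$ between $\beta_{S_{n+2}+N_3}$ and $\beta_{S_{n+1}+N_3}$ gives $\rho_1\geq L(M^*)\diam\beta_{S_{n+1}+N_3}$ outright). The gap is in the adjacent case, which is the heart of the lemma. You propose to take $\rho_1=\min_X\dist(z_0,X)$ and bound it below through Lemma~\ref{lem:30xa,1} by $K(M^*)\min_X\diam X$, and then to bound $\min_X\diam X$ from below via Koebe together with an entry-time bound $r_X\in[S_n-S_{\sigma(n)},S_n]$. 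That upper bound on the entry time is false: $X\subset A_n\subset\beta_{S_n+N_3}$ gives no control from above on the first entry time into $b_{S_{\sigma(n)}+N_3}$. The domain of $\phi_{S_n+N_1}$ has infinitely many components $\zeta_{n,j}$ inside $A_n$, with unbounded entry times and hence arbitrarily small diameters, so $\inf_X\diam X=0$ and no bound of the form $\rho_1\geq r_n^{1+o(1)}$ can be extracted this way; Lemma~\ref{lem:30xa,1} alone, being of the form $\dist(z_0,\overline X)\geq K(M^*)\diam X$, is vacuous for tiny components. You correctly identified this as the main obstacle, but the fix you sketch does not close it.

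The missing ingredient — and what the paper uses exactly in this regime ($|z_0-c_0|<2\diam\beta_{S_n+N_3}$ but $z_0\notin\beta_{S_{n+2}+N_3}$) — is the geometry of the external ray itself, not of the components: by Fact~\ref{fa:30xa,1} (asymptotic Lipschitz accessibility along $\gamma$, from~\cite{gsarkiv}), $\D\bigl(z_0,|z_0-c_0|^{1+o(|z_0-c_0|)}\bigr)\cap{\cal J}=\emptyset$. This provides an \emph{absolute} lower bound $\rho_1'=|z_0-c_0|^{1+o(1)}$ on the distance from $z_0$ to the Julia set, independent of component sizes; since every component of $\tilde Z_n$ meets ${\cal J}$, Lemma~\ref{lem:30xa,1} then only costs the factor $K(M^*)/(1+K(M^*))$, giving
\[ \rho_1\;\geq\;L_2(M^*)\,|z_0-c_0|^{1+o(1)}\;\geq\;L_2(M^*)\left(\diam\beta_{S_{n+2}+N_3}\right)^{1+o(1)} \; .\]
Combined with $\rho_2=3\diam\beta_{S_n+N_3}$ and Lemma~\ref{lem:26xp,1}, this yields $\log(\rho_2/\rho_1)\leq(S_{n+2}-S_n)\lambda+o_{M^*}(S_n)=o_{M^*}(S_n)$, which is what produces $\epsilon(n)\rightarrow 0$; no lower bound on $\diam X$, and no use of $S_{\sigma(n)}\sim S_n/2$, is needed anywhere in this lemma.
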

The remaining part of Step I is devoted to the proof of Lemma~\ref{lem:I} which will be divided into several geometric cases.
\subparagraph{The case of $z_0$ far away.}
The first case is when $|c_0-z_0|\geq 2\diam \beta_{S_n+N_3}$.
The $\rho_2/\rho_1 \leq 3$, while in view of Lemma~\ref{lem:26xp,1}
\[ \log\left(\diam\beta_{S_n+N_3}\right)^{-1} \geq L_1 S_n \]
with positive $L_1$ provided $n$ is large enough. Hence, in this case
to satisfy the claim we just need $\epsilon(n) \geq \frac{\log 3}{L_1
  S_n}$.

So, from now on,  suppose $|c_0-z_0| < 2\diam\beta_{S_n+N_3}$.
Then we can put
\begin{equation}\label{equ:30xa,2}
  \rho_2 = 3\cdot\diam\beta_{S_n+N_3} \; .
\end{equation}
\subparagraph{$z_0$ not too deep.}
 In this case we assume additionally that $z_0$ is outside
 $\beta_{S_{n+2}+N_3}$. 

In order to estimate $\rho_1$, let us quote the following
\begin{fact}\label{fa:30xa,1}
For every $z_0\in{\gamma}$, $D(z_0,|z_0-c_0|^{1+o(|z_0-c_0|)}) \cap
{\cal J} = \emptyset$.
\end{fact}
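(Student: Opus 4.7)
The plan is to parametrize $\gamma$ through the Riemann map to the complement of the Julia set and reduce the claim to a boundary estimate for that uniformization. Let $\Psi_{c_0}:\{|w|>1\}\to\C\setminus {\cal J}_{c_0}$ be the Riemann map normalized at $\infty$, and $\zeta_0=e^{2\pi i\theta(c_0)}$ the external angle of the landing ray. For $z_0=\Psi_{c_0}(r\zeta_0)$ with $r>1$ close to $1$, Koebe's $1/4$-theorem applied to $\Psi_{c_0}$ on the disc of radius $r-1$ about $r\zeta_0$ gives the lower bound $\dist(z_0,{\cal J}_{c_0})\geq \tfrac14(r-1)|\Psi_{c_0}'(r\zeta_0)|$, while radial integration from the boundary value $c_0$ yields $|z_0-c_0|\leq \int_1^r|\Psi_{c_0}'(s\zeta_0)|\,ds$; trivially $\dist(z_0,{\cal J}_{c_0})\leq |z_0-c_0|$.

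Since $c_0$ is $\omega$-typical on $\partial\M$ and $\omega=\Psi_{*}\lambda_1$, the external angle $\theta(c_0)$ is Lebesgue-typical on $[0,1)$, so I would invoke Makarov's law of the iterated logarithm for $\Psi_{c_0}$ at $\zeta_0$: $\bigl|\log|\Psi_{c_0}'(s\zeta_0)|\bigr|=O\bigl(\sqrt{\log\tfrac{1}{s-1}\log\log\log\tfrac{1}{s-1}}\bigr)=o\bigl(\log\tfrac{1}{s-1}\bigr)$ as $s\to 1^+$. Equivalently, for any $\epsilon>0$, one has $(s-1)^{\epsilon}\leq |\Psi_{c_0}'(s\zeta_0)|\leq (s-1)^{-\epsilon}$ once $s$ is close enough to $1$.

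Inserting these into the Koebe and arc-length bounds yields $|z_0-c_0|\leq C_1(r-1)^{1-\epsilon}$ and $\dist(z_0,{\cal J}_{c_0})\geq C_2(r-1)^{1+\epsilon}$; taking logarithms and eliminating $r-1$ produces $-\log\dist(z_0,{\cal J}_{c_0})\leq \tfrac{1+\epsilon}{1-\epsilon}\,(-\log|z_0-c_0|)+O(1)$, i.e.\ $\dist(z_0,{\cal J}_{c_0})\geq |z_0-c_0|^{1+O(\epsilon)}$. Since $\epsilon$ may be made arbitrarily small as $z_0\to c_0$, this is precisely the desired $\dist(z_0,{\cal J})\geq |z_0-c_0|^{1+o(|z_0-c_0|)}$.

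The main obstacle is the measurability issue: Makarov's LIL produces, for each fixed $c_0$, a Lebesgue-full set of circle angles on which the bound holds for $\Psi_{c_0}$, and one must check that $\theta(c_0)$ lies in that set for $\omega$-a.e.\ $c_0$. This would require a Fubini argument using the measurable dependence of $\Psi_c$ on $c$. Alternatively, one may bypass Makarov entirely with a dynamical argument: the maximal Lyapunov exponent $\lambda=\log d$ at $c_0$, combined with the Yoccoz box structure of Section~2 and univalent pullbacks under inverse branches of $f_{c_0}^n$ along the ray (recalling that $f_{c_0}$ permutes rays by multiplying angles by $d$), provides sub-polynomial distortion control on $|\Psi_{c_0}'|$ that yields the same conclusion without any boundary probability.
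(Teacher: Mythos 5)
Your analytic skeleton (Koebe one-quarter for the lower bound on $\dist(z_0,{\cal J}_{c_0})$, radial integration of $|\Psi_{c_0}'|$ for the upper bound on $|z_0-c_0|$, and a Makarov-type estimate $\bigl|\log|\Psi_{c_0}'(s\zeta_0)|\bigr|=o\bigl(\log\tfrac{1}{s-1}\bigr)$ to compare the two) is the standard route to asymptotic Lipschitz accessibility, and it does yield an exponent $1+\delta$ with $\delta\to 0$, which is the sense in which the Fact is actually used later (the literal reading ``$o(|z_0-c_0|)$'' would be unattainable by any LIL-type bound, but only a correction tending to $0$ is needed). Note, however, that the paper does not reprove this at all: its proof is a citation of \cite{gsarkiv}, where asymptotic Lipschitz accessibility at typical points is established, with the remark that Theorem~\ref{theo:flat} (proved in \cite{grasw}) is much stronger.

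The genuine gap is the step you flag and then dismiss too quickly. Makarov's law of the iterated logarithm for the fixed map $\Psi_{c_0}$ holds for Lebesgue-a.e.\ angle, with an exceptional set depending on $c_0$; you need it at the single angle $\theta(c_0)$, and both the map and the angle are functions of $c_0$. A Fubini argument cannot close this: the relevant set is the graph $\{(c_0,\theta(c_0))\}$, which is null for the product of $\omega$ with Lebesgue measure, so knowing that for each fixed $c_0$ almost every angle is good says nothing about the angle $\theta(c_0)$ itself. This parameter-to-phase typicality transfer is precisely the nontrivial content hidden in the Fact, and it is what the cited reference supplies via dynamical/combinatorial arguments rather than measure-theoretic generalities. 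Your alternative ``dynamical'' sketch points in the right direction but is not carried out: to reach exponent $1+o(1)$ (rather than mere H\"older accessibility with some exponent) one needs $|D_zf^n_{c_0}(z)_{|z=c_0}|=d^{\,n(1+o(1))}$ at typical $c_0$ (the expansion of Theorem~1.2 of \cite{grsw} together with the Lyapunov exponent $\log d$ from \cite{gniotek}), combined with bounded-distortion univalent pullbacks along the ray coming from the Yoccoz piece structure (as in Proposition~\ref{prop:24xa,1} and the roundness estimates), so that at potential of order $d^{-n}$ both $|z_0-c_0|$ and $\dist(z_0,{\cal J}_{c_0})$ are of size $d^{-n(1+o(1))}$. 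As written, the proposal does not establish the Fact; it reduces it to exactly the point that requires the cited results.
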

\begin{proof}
  This is a statement of asymptotic Lipschitz accessibility,
  see~\cite{gsarkiv}. A much stronger claim is provided by Theorem~\ref{theo:flat}.
\end{proof}

Thus, for $\rho_1' = \left| z_0-c_0\right|^{1+o(2\,\diam\beta_{S_n+N_3})}$, the ball
$\D(z_0, \rho_1)$ misses ${\cal J}$. With our extra hypothesis
$|z_0-c_0|\geq \diam\beta_{S_{n+2}+N_3}$ and so
\begin{equation}\label{equ:30xa,1}
  \rho_1' \geq
  \left(\diam\beta_{S_{n+2}+N_3}\right)^{1+o(2\,\diam\beta_{S_n+N_3})}\; .
\end{equation}

  Now suppose $X$ is any component of $\tilde{Z}_n$. By definition, it
  intersects ${\cal J}$. So by Lemma~\ref{lem:30xa,1},
  $ \rho_1 \geq L_2(M^*)\rho'_1$ where $L_2(M^*):=\frac{K(M^*)}{1+K(M^*)}$
  with $K(M^*)$ from that Lemma.

  Taking into account estimates~(\ref{equ:30xa,2})
  and~(\ref{equ:30xa,1}), we arrive at
  \[ \frac{\rho_2}{\rho_1} \leq \frac{3}{L_2(M^*)}
  \frac{\diam\beta_{S_n+N_3}}{\diam\beta_{S_{n+2}+N_3}}
  \left(\diam\beta_{S_n+N_3}\right)^{o(2\,\diam\beta_{S_n+N_3})} \; .\]

  Taking logarithms and using Lemma~\ref{lem:26xp,1}, we get
  \[
  \frac{\log\frac{\rho_2}{\rho_1}}{\log\left(\diam\beta_{s_n+N_3})\right)^{-1}}\]
  \[ \leq
  \frac{\log\frac{3}{L_2(M^*)}+(S_{n+2}-S_n)\lambda+o_{M^*}(S_{n+2})+ S_n\lambda\,
  o_{M^*}(2\,\diam\beta_{S_n+N_3})}{S_n\lambda+o_{M^*}(S_n)}\; . \]

    We use that $\lim_{n\rightarrow \infty}\frac{S_{n+1}}{S_n}=1$. All terms in the numerator can be rolled into $o_{M^*}(S_n)$ and
    so the claim follows.
    \subparagraph{The case of $z_0\in\beta_{S_{n+2}+N_3}$.}
For $n$ sufficiently large $S_{n+2}+N_1 \geq S_{n+1}+N_3$ and so
$\beta_{S_{n+2}+N_3}$ is surrounded inside $\beta_{S_{n+1}+N_3}$ by an
annulus with modulus $M^*/d$. Hence,
\[ \rho_1 \geq L_3(M^*) \diam\beta_{S_{n+1}+N_3} \]
with $L_3(M^*)$ and for $\rho_2$ we can still take
estmate~(\ref{equ:30xa,2}). Hence,

\[ \frac{\log\frac{\rho_2}{\rho_1}}{\log (\diam\beta_{S_n+N_3})^{-1}} \leq
\frac{\log\frac{3}{L_3(M^*)} + (S_{n+1}-S_n)\lambda +
  o_{M^*}(S_{n+1})}{S_n\lambda + o_{M^*}(S_n)} \]
which tends to $0$ with $n$ as in the preceding case.

This completes the proof of Lemma~\ref{lem:I} and  Step I. 
\paragraph{Step II.}
Recall set $\tilde{Z}_n$ introduced in Step I. Let ${\cal X}_n$ be a
connected component of $\tilde{Z}_n$.
\begin{em}
Then for every $n>1$ and
$z_0\in{\gamma}$
\[ \frac{\int_{{\cal X}_n} \frac{\chi_{Z_n}(w)\,
    d\lambda_2(w)}{|w-z_0|^2}}
{\int_{{\cal X}_n} \frac{d\lambda_2(w)}{|w-z_0|^2}} \leq  \exp\left(-\frac{S_n}{2}\frac{\lambda\alpha}{2d}
  + o_{M^*}(S_n)\right) \; .\]
\end{em}

  By Lemma~\ref{lem:30xa,1},
  $\frac{|w_1-z_0|}{|w_2-z_0|} \geq \frac{K(M^*)}{1+K(M^*)}$ for any
  $w_1, w_2\in {\cal X}_n$.

  Then, if $X_n = Z_n \cap {\tilde {\cal X}}_n$,
  \[  \frac{\int_{{\cal X}_n} \frac{\chi_{Z_n}(w)\,
    d\lambda_2(w)}{|w-z_0|^2}}
{\int_{{\cal X}_n} \frac{d\lambda_2(w)}{|w-z_0|^2}} \leq
  \left(1+K(M^*)^{-1}\right)^2 \frac{|X_n|}{|X|} \]
  and the estimate then follows directly from
  Proposition~\ref{prop:29xp,1}, since the constant can be rolled into
  $o_{M^*}(S_n)$. 

  \paragraph{Step III.}
  \begin{em}
  For every $n>1$ and $z_0\in {\gamma}$

  \[ \int_{\CC} \frac{\chi_{Z_n}(w)\,
    d\lambda_2(w)}{|w-z_0|^2} \leq \epsilon(n) \exp\left(-\frac{S_n}{2}\frac{\lambda\alpha}{2d}
  + o_{M^*}(S_n)\right) \]
  where $\epsilon(n)$ is the sequence from Step I. 
  \end{em}

  Since every component of $Z_n$ is contained in some ${\cal X}_n$, Step II and  Proposition~\ref{prop:29xp,1} imply

  \[ \int_{\CC} \frac{\chi_{Z_n}(w)\,
    d\lambda_2(w)}{|w-z_0|^2} \leq \exp\left(-\frac{S_n}{2}\frac{\lambda\alpha}{2d}
  + o_{M^*}(S_n)\right) \int_{\tilde{Z}_n}
  \frac{d\lambda_2(w)}{|w-z_0|^2} \; .\]
  By Step I,
  \[  \int_{\tilde{Z}_n}
  \frac{d\lambda_2(w)}{|w-z_0|^2} \leq \int_{\rho_1<|u|<\rho_2}
  \frac{d\lambda_2(u)}{|u|^2} = \log\frac{\rho_2}{\rho_1} \leq
  \epsilon(n)\log\left(\diam  \beta_{S_n+N_3}\right)^{-1} \; .\]

  By Lemma~\ref{lem:26xp,1},
  \[ \log\left(\diam \beta_{S_n+N_3}\right)^{-1} = \lambda S_n +
  o_{M^*}(S_n) \; .\]

  Taking all these estimates together yields
  \[  \int_{\CC} \frac{\chi_{Z_n}(w)\,
    d\lambda_2(w)}{|w-z_0|^2} \leq \epsilon(n)\left(\lambda S_n + o_{M^*}(S_n)\right) \exp\left(-\frac{S_n}{2}\frac{\lambda\alpha}{2d}
  + o_{M^*}(S_n)\right) \]
  which gives the claim of Step III, since the factor before the
  $\exp$ involving $S_n$ can be included in the $o_{M^*}(S_n)$ in the
  exponent.

  \paragraph{Step IV.}
  \begin{em}
    For any $n>1$, $\dist(\tilde{Z}_n,{\gamma}) > 0$.
  \end{em}

  Clearly, $\dist({\cal J}\setminus\beta_{S_{n+1}+N_3},\gamma) := D>0$.
  By Lemma~\ref{lem:30xa,1} applied to each component of
  $\tilde{Z}_n$,
  \[ \dist\left(\tilde{Z}_n,{\gamma}\right) \geq \frac{K(M^*)D}{1+K(M^*)} \;
      .\]
  \paragraph{Conclusion of the proof of Theorem~\ref{theo:30xa,1}.}
  Choose $R_0 < \dist(c_0, A_1)$. Then the claim of Step IV also holds
  for $n=1$. We conclude that there is a function $n(R)$,
  $\lim_{R\rightarrow 0^+} n(R) = \infty$ such that for any $z_0 \in
  {\gamma} \cap \D(c_0,R_0)$,  the disk $\D(z_0,R)$ is disjoint from $Z_n$
  for all $n<n(R)$.

  Then by Step III,

  \[ \int_{D(z_0,R)} \frac{\chi_{Z}(w)\, d\lambda_2(w)}{|w-z_0|^2}
  \leq \sum_{n=n(R)}^{\infty} \exp\left(-\frac{S_n}{2}\frac{\lambda\alpha}{2d}
  + o_{M^*}(S_n)\right) := o(R) \; .\]

  Since the series
  \[ \sum_{n=1}^{\infty}  \exp\left(-\frac{S_n}{2}\frac{\lambda\alpha}{2d}
  + o_{M^*}(S_n)\right) < \infty\; , \]
  the bound $o(R)$ tends to $0$ with $R\rightarrow 0$.

  \section{Distortion estimates}
  \paragraph{The transversality function.}
  The transversality function is defined by
  \[ \mathcal{T}(c) = \sum_{n=0}^{\infty} \left(Df^n_c(c)\right)^{-1} \]
  wherever the series is convergent, which is at least for $c\notin {\cal M}_d$. 
  
  Fact~\ref{fa:1kp,1} follows from calculus and the definition of $\Psi$ and $\Psi_c$.
  \begin{fact}\label{fa:1kp,1}
    For $c\notin{\cal M}_d$ 
    \[ \frac{D_c\Psi^{-1}(c)}{D_z\Psi^{-1}_c(z)_{|z=c}  } = \mathcal{T}(c) .\]
  \end{fact}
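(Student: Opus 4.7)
The plan is to reduce the identity to a calculation on the B\"ottcher coordinate using the Douady--Hubbard correspondence. For $c\notin\M$, both $\Psi^{-1}(c)$ and $\Psi_{c}^{-1}(c)$ have modulus $e^{G_c(c)}=e^{G_{\M}(c)}$, and the classical identification of the parameter external angle $\theta(c)$ with the dynamical external angle of $c$ viewed as the critical value of $f_c$ yields $\Psi^{-1}(c)=\Psi_{c}^{-1}(c)$. Writing $B(c,z):=\Psi_{c}^{-1}(z)$ jointly in both variables, this reads $\Psi^{-1}(c)=B(c,c)$, and the chain rule gives
$$D_{c}\Psi^{-1}(c)=\partial_{1}B(c,c)+\partial_{2}B(c,c),\qquad \partial_{2}B(c,c)=D_{z}\Psi_{c}^{-1}(z)_{|z=c}.$$
Dividing, the claim reduces to $\partial_{1}B(c,c)/\partial_{2}B(c,c)=\T(c)-1$.

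Next I would exploit the iterated B\"ottcher identity $B(c,f_{c}^{n}(z))=B(c,z)^{d^{n}}$. With $q_{n}(c):=f_{c}^{n}(c)$ and $\rho_{n}(c):=\partial_{c}[f_{c}^{n}(z)]_{|z=c}$, partial differentiation in $z$ and in $c$, evaluated at $z=c$, gives
$$\partial_{2}B(c,q_{n})\,Df_{c}^{n}(c)=d^{n}B(c,c)^{d^{n}-1}\partial_{2}B(c,c),$$
$$\partial_{1}B(c,q_{n})+\partial_{2}B(c,q_{n})\,\rho_{n}(c)=d^{n}B(c,c)^{d^{n}-1}\partial_{1}B(c,c).$$
Dividing the second identity by the first yields
$$\frac{\partial_{1}B(c,c)}{\partial_{2}B(c,c)}=\frac{\partial_{1}B(c,q_{n}(c))}{\partial_{2}B(c,q_{n}(c))\,Df_{c}^{n}(c)}+\frac{\rho_{n}(c)}{Df_{c}^{n}(c)}.$$

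The dynamical series emerges from the recurrence $\rho_{n}=Df_{c}(q_{n-1})\rho_{n-1}+1$, $\rho_{0}=0$, which comes from $f_{c}^{n}(z)=f_{c}^{n-1}(z)^{d}+c$; telescoping gives
$$\rho_{n}(c)=Df_{c}^{n}(c)\sum_{k=1}^{n}\bigl(Df_{c}^{k}(c)\bigr)^{-1},$$
so $\rho_{n}(c)/Df_{c}^{n}(c)\to\T(c)-1$. For the remaining error term, since $c\notin\M$ we have $q_{n}(c)\to\infty$; near infinity $B(c,z)=z+O(1)$ locally uniformly in $c$, hence $\partial_{1}B(c,q_{n})$ and $\partial_{2}B(c,q_{n})$ stay bounded, while $|Df_{c}^{n}(c)|$ grows doubly exponentially. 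Therefore the error term vanishes in the limit, which combined with the chain-rule identity yields $D_{c}\Psi^{-1}(c)/D_{z}\Psi_{c}^{-1}(z)_{|z=c}=\T(c)$.

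The main subtlety to address is the joint holomorphic dependence of $B(c,z)$ on $(c,z)$ when $z$ lies along the Green line at $c$: since $c\notin\M$ the critical orbit escapes, so the B\"ottcher map extends to a full neighborhood through standard holomorphic motions, in particular $\partial_{1}B$ is well defined along the orbit of $c$ and the formal manipulations above are rigorous.
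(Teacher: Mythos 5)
Your proposal is correct, and it is precisely the fleshed-out version of what the paper leaves as a one-line remark ("follows from calculus and the definition of $\Psi$ and $\Psi_c$"): you use the Douady--Hubbard identity $\Psi^{-1}(c)=\Psi_c^{-1}(c)=B(c,c)$ and differentiate the B\"ottcher functional equation along the escaping critical orbit, which is the natural calculus argument the authors have in mind. The chain-rule reduction, the telescoped recurrence giving $\rho_n/Df_c^n(c)\to\T(c)-1$, and the decay of the error term via boundedness of $\partial_1B,\partial_2B$ near infinity all check out.
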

  \paragraph{The main estimate.}
  Now let $u_0$ be the point on $\partial \D$ with the external argument of $c_0$.
  $u_n$ is chosen with the same argument as $u_0$ so that $\Psi(u_n)=c_n$
  is on the boundary of $V_{S_n+N_4}$. Hence,
  \begin{equation}\label{equ:1kp,1}
  |u_n-u_0| \leq K(M^*) d^{-S_n-N_4} .
  \end{equation}
  Choose $u$ on the segment 
  between $u_0$ and $u_n$ and write $c(u) := \Psi(u), z(u)=\Psi_{c(u)}(u)$ and
  $z_n(u)=\Psi_{c(u)}(u_n)$. Observe also that $|u_n|^{S_n+N_4}$ is a
  fixed number corresponding to the equipotential which bounds the
  initial Yoccoz piece. 

  Our goal is proving the following.
  \begin{prop}\label{prop:3kp,1}
  For almost every $c_0\in\partial{\cal M}_d$ with respect to the
  harmonic measure   
  \[ \lim_{n\rightarrow\infty} 
  \log\frac{D_z\Psi_{c_n}(z)_{|z=u_n}}{D_z\Psi_{c_0}(z)_{|z=u_n}} = 0 .\]
  \end{prop}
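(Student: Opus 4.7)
The plan is to use B\"ottcher's functional equation $\Psi_c(u^d)=f_c(\Psi_c(u))$ to transfer the comparison to the dynamical plane. Differentiating and iterating $k=S_n+N_4$ times yields
\[
\Psi_c'(u_n) \;=\; \frac{\Psi_c'(w_n)\cdot d^k\, u_n^{d^k-1}}{(f_c^k)'(\Psi_c(u_n))},\qquad w_n := u_n^{d^k},
\]
where $|w_n|=R_0$ is the fixed equipotential level bounding the initial Yoccoz piece. Taking logarithms of the ratio at $c=c_n$ and $c=c_0$ splits the problem into two terms: (A) the B\"ottcher-derivative ratio $\log(\Psi_{c_n}'(w_n)/\Psi_{c_0}'(w_n))$ at the fixed large scale, and (B) the reciprocal iterated-derivative ratio $\log\bigl((f_{c_0}^k)'(\Psi_{c_0}(u_n))/(f_{c_n}^k)'(\Psi_{c_n}(u_n))\bigr)$.

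Term (A) vanishes immediately, since the B\"ottcher coordinate $\Psi_c(w)=w+b_0+b_1(c)/w+\cdots$ converges uniformly on $|w|\geq R_0$ with coefficients polynomial in $c$, so $\Psi_{c_n}'(w_n)/\Psi_{c_0}'(w_n)\to 1$ as $c_n\to c_0$.

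For (B), I would interpret $1/(f_c^k)'(\Psi_c(u_n))$ as $\phi_c'(\Psi_c(w_n))$, where $\phi_c$ is the inverse branch of $f_c^k$ that maps a fixed-scale disk around $\Psi_{c_0}(w_n)$ univalently onto a shrinking neighborhood of $\Psi_c(u_n)$. By Proposition~\ref{prop:24xa,1}, the iterate $f_c^{S_n}$ extends from $\beta_{S_n+N_3}$ univalently onto $\beta_{S_{\sigma(n)}+N_1}$ surrounded by an annulus of modulus $\geq M^*/d$, so Koebe's theorem gives $Q(M^*)$-bounded distortion of $\phi_c$; the family $\phi_c$ depends holomorphically on $c\in V_{S_n+N_1}$ by Lemmas~\ref{lem:22xa,1} and~\ref{lem:23xa,1}. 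Hence $c\mapsto\log\phi_c'(\xi)$ is a uniformly bounded holomorphic function on $V_{S_n+N_1}$ at any fixed base point $\xi$ near $\Psi_{c_0}(w_n)$.

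The main obstacle is to quantify the vanishing of $\log\phi_{c_n}'(\xi)-\log\phi_{c_0}'(\xi)$ as $n\to\infty$, not merely as $M^*\to\infty$. A direct Cauchy estimate on $V_{S_n+N_1}$ combined with $|c_n-c_0|\leq \diam V_{S_n+N_4}$ and the Teichm\"uller nested-modulus bound $\diam V_{S_n+N_4}/\diam V_{S_n+N_1}\lesssim \exp(-(N_4-N_1)M^*/d)$ yields only a fixed $M^*$-dependent bound. To upgrade this to actual vanishing in $n$, I would exploit the small-scale asymptotic conformality of the similarity map $\Upsilon_n$ guaranteed by Fact~\ref{theo:basic}~(vi)--(vii): the quasiconformal dilatation of $\Upsilon_n$ on $\D(c_0,r)$ tends to $1$ as $r\to 0$. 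Conjugating the parameter motion from $c_0$ to $c_n$ with the natural dynamical holomorphic motion through $\Upsilon_n$, the variation of $\log\phi_c'(\xi)$ along the segment $[c_0,c_n]$ is controlled by the excess dilatation of $\Upsilon_n$ at the shrinking parameter scale $|c_n-c_0|$, which tends to zero. Combining the disappearance of (A) and (B) concludes the proof.
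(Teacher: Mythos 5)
Your decomposition via the B\"ottcher functional equation and the disposal of term (A) are fine and parallel the paper's starting point $\Psi_c(z)=f_c^{-S_n}\circ\Psi_c(z^{d^{S_n}})$. The gap is in term (B), and you in fact name it yourself: a Cauchy estimate over $V_{S_n+N_1}$ gives only an $M^*$-dependent bound, because the relative size of $V_{S_n+N_4}$ inside $V_{S_n+N_1}$ is bounded below independently of $n$. Your proposed fix --- controlling the variation of $\log\phi_c'(\xi)$ along $[c_0,c_n]$ by the ``excess dilatation'' of $\Upsilon_n$ from Fact~\ref{theo:basic}~(vi)--(vii) --- does not work as stated. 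The dilatation of the single quasiconformal map $\Upsilon_{c_0}$ measures how far it is from being conformal (and on the region where $\Upsilon_n=\Psi\circ\Psi_{c_0}^{-1}$ it is already conformal, so the bound carries no information there); it is not a bound on the $c$-derivative of the holomorphically moving inverse branches $c\mapsto\phi_c$, and no mechanism is offered that converts ``dilatation $\to 1$ at small scales'' into $\log\phi_{c_n}'(\xi)-\log\phi_{c_0}'(\xi)\to 0$. Moreover your argument uses the typicality of $c_0$ only through this qualitative conformality statement, whereas the decay in $n$ genuinely requires a quantitative expansion property along the external ray; without it the estimate cannot close (and leaning on conformality of $\Upsilon_{c_0}$ at $c_0$ here also risks circularity, since the Proposition is a step toward computing that very derivative in Theorem~\ref{theo:trans1}).

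The paper's route supplies exactly the two quantitative inputs you are missing. It differentiates $\log D_z\Psi_{c(u)}(z)_{|z=u_n}$ in the parameter and integrates along the radial segment from $u_0$ to $u_n$. The parameter increment is weighted by Lemma~\ref{lem:4ka,1}: $|u_n-u_0|\,|D_u\Psi(u)|\le Q(c_0)\,\big|D_zf^{S_n}_{c(u)}(z_n(u))\big|^{-1}$, which rests on Fact~\ref{fa:1kp,1} and the boundedness of $\log\T$ along almost every external ray. The $c$-derivative of $\log D_zf_c^{S_n}(z)_{|z=z_n(u)}$ is expanded as a sum of nonlinearities, bounded by $\varepsilon\sum_{k}|D_zf^{S_n-k}(f^k_c(z_n(u)))|$ with $\varepsilon$ small from the $M^*$-controlled distortion; after multiplying by the weight above this telescopes into $Q(c_0)\varepsilon\sum_{k}|D_zf^k_{c(u)}(z_n(u))|^{-1}$, which is uniformly bounded by the expansion estimate $|D_zf_c^k(c)|\ge K\lambda^k$ valid along almost every ray (Theorem~1.2 of~\cite{grsw}) --- this is where the harmonic-measure hypothesis enters quantitatively. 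Since $\varepsilon$ is arbitrary, the limit is $0$. To repair your proof you would need to replace the appeal to Fact~\ref{theo:basic}~(vi)--(vii) by these transversality and ray-expansion estimates (or an equivalent quantitative substitute).
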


  The idea of the proof is to estimate the derivative of $D_u \log
  D_z\Psi_{c(u)}(z)_{z=u_n}$ for $u$ between $u_0$ and $u_n$. It is more
  convenient to take the derivatives with respect to $c$ instead at
  $c=c(u)$, so begin by estimating $D_u c(u) = D\Psi(u)$.

  \paragraph{Estimate of $D_u\Psi(u)$.}
  \begin{lem}\label{lem:4ka,1}
  For almost every $c_0\in\partial{\cal M}_d$ in the sense of the
  harmonic measure that exists $Q(c_0)$ so that for all $n\geq
  n_0(c_0)$ and $M^* \geq M^*_0$
  \[ |u_n - u_0| |D_u\Psi(u)| \leq \frac{\left| Q(c_0)\right|}{ \left| D_z
      f_{c(u)}^{S_n}\left(z_n(u)\right)\right|} .\]
  \end{lem}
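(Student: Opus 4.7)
The plan is to combine Koebe's distortion theorem with the $C^1$-conformality of the similarity map. Since the hyperbolic geodesic $\gamma$ landing at $c_0$ is disjoint from the exceptional compact $Z$ by Fact~\ref{theo:basic}(iv), the identity $\Psi=\Upsilon_{c_0}\circ\Psi_{c_0}$ from Fact~\ref{theo:basic}(v) holds on the image of the radial segment $[u_0,u_n]$. Define
\[ Q(c_0):=D_z\Upsilon_{c_0}(z)_{|z=c_0}, \]
which is non-zero by Fact~\ref{theo:basic}(vii). The chain rule gives $D_u\Psi(u)=D\Upsilon_{c_0}(\Psi_{c_0}(u))\cdot D_u\Psi_{c_0}(u)$, and Theorem~\ref{theo:continuity} together with conformality at $c_0$ ensures that $D\Upsilon_{c_0}$ is continuous along $\gamma$ with value $Q(c_0)$ at $c_0$. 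Hence for $n\geq n_0(c_0)$ and $u\in[u_0,u_n]$,
\[ |D_u\Psi(u)|\leq 2|Q(c_0)|\cdot|D_u\Psi_{c_0}(u)|. \]

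The remaining task is to bound $|u_n-u_0|\cdot|D_u\Psi_{c_0}(u)|$ by a constant multiple of $|D_zf_{c(u)}^{S_n}(z_n(u))|^{-1}$. Koebe's distortion theorem applied to $\Psi_{c_0}$ on $\{|u|>1\}$ gives $(|u|-1)|D_u\Psi_{c_0}(u)|\asymp\dist(\Psi_{c_0}(u),{\cal J}_{c_0})$. Since $\Psi_{c_0}$ maps $[u_0,u_n]$ onto the subarc of $\gamma$ from $c_0$ to $\Psi_{c_0}(u_n)\in\partial\beta_{S_n+N_4}$ (the parameter/phase boundary correspondence used in Proposition~\ref{prop:23xp,1}), we have $\dist(\Psi_{c_0}(u),{\cal J}_{c_0})\leq\diam(\beta_{S_n+N_4})$. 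Combined with $|u_n-u_0|\leq K(M^*)d^{-S_n-N_4}$ from~(\ref{equ:1kp,1}) and an argument controlling the ratio $(|u_n|-1)/(|u|-1)$ along the segment, this yields
\[ |u_n-u_0|\cdot|D_u\Psi_{c_0}(u)|\leq K'(M^*)\diam(\beta_{S_n+N_4}). \]

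Next, $f_{c_0}^{S_n-1}$ maps $\beta_{S_n+N_4}$ univalently onto the fixed-scale piece $b_{N_4}$ with $M^*$-bounded distortion (cf.\ the proof of Lemma~\ref{lem:26xp,1}), so $\diam(\beta_{S_n+N_4})\asymp|D_zf_{c_0}^{S_n-1}(z_n(u_0))|^{-1}$. A further bounded factor $|f_{c_0}'(f_{c_0}^{S_n-1}(z_n(u_0)))|$, bounded because the argument lies near $b_{N_4}$, converts $S_n-1$ into $S_n$. Finally, by Lemma~\ref{lem:22xa,1} the natural holomorphic motion is defined on $V_{S_n+N_1}\supset V_{S_n+N_4}$, so $|D_zf_{c_0}^{S_n}(z_n(u_0))|\asymp|D_zf_{c(u)}^{S_n}(z_n(u))|$ uniformly in $u$ on the segment. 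Once $M^*\geq M^*_0$ is fixed, all the $M^*$-dependent comparison constants collapse into a single factor that I absorb into $|Q(c_0)|$, giving the claimed inequality.

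The principal obstacle is the uniformity of these estimates across the entire segment $[u_0,u_n]$. Near $u_0$ the Koebe bound $(|u|-1)|D_u\Psi(u)|\asymp\dist(\Psi(u),{\cal M}_d)$ sees both sides tending to zero, and the ratio $(|u_n|-1)/(|u|-1)$ that appears when one multiplies by $|u_n-u_0|$ blows up. Controlling it requires the full $C^1$ smoothness of $\Upsilon_{c_0}$ along $\gamma$ from Theorem~\ref{theo:continuity}, which propagates the infinitesimal linearization $|\Psi(u)-c_0|\asymp|Q(c_0)|\cdot|\Psi_{c_0}(u)-c_0|$ from the limit point $u_0$ to the entire segment as soon as $n\geq n_0(c_0)$; everything else is bounded distortion for univalent iterations and stability of the holomorphic motion.
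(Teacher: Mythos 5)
You take a genuinely different route from the paper, and it has a real gap at its central step. Your reduction, after the chain rule through $\Upsilon_{c_0}$, requires the uniform bound $|u_n-u_0|\,|D_u\Psi_{c_0}(u)|\leq K'(M^*)\,\diam\beta_{S_n+N_4}$ for \emph{every} $u$ on the segment $[u_0,u_n]$. By Koebe this is the statement that $\dist(\Psi_{c_0}(u),{\cal J}_{c_0})\lesssim \frac{|u|-1}{|u_n|-1}\,\diam\beta_{S_n+N_4}$, i.e.\ that $|D_u\Psi_{c_0}(u)|$ stays comparable to (or below) its value at the level $|u_n|-1$ all the way down to $u_0$. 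You correctly identify this as ``the principal obstacle,'' but the cure you offer --- continuity of $D_z\Upsilon_{c_0}$ along the geodesic and the linearization $|\Psi(u)-c_0|\asymp|Q(c_0)|\,|\Psi_{c_0}(u)-c_0|$ --- only compares the parameter picture with the phase picture; it gives no information about the radial behavior of $\Psi_{c_0}$ itself, which is exactly what is needed. At a harmonic-measure typical landing point the radial derivative of a Riemann map is precisely the quantity that oscillates (Makarov-type behavior; the paper itself emphasizes that these rays twist infinitely often), so boundedness of $|D_u\Psi_{c_0}(u)|$ on the whole segment is not a soft consequence of anything you quote --- it is the entire difficulty, and your proposal does not supply an argument for it. The same unexamined uniformity also hides in your step ``$|D_zf^{S_n}_{c_0}(z_n(u_0))|\asymp|D_zf^{S_n}_{c(u)}(z_n(u))|$ by Lemma~\ref{lem:22xa,1}'': a holomorphic motion of points does not by itself control derivatives of high iterates; one needs the bounded distortion of $f^{S_n}_{c(u)}$ on $\beta_{S_n+N_4,c(u)}$, which is how the paper handles the analogous point.

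The paper's proof is organized quite differently and never touches $\Upsilon_{c_0}$ or Theorem~\ref{theo:continuity} at this stage (that theorem enters only later, in the limit argument for Theorem~\ref{theo:trans1}). It starts from the exact identity of Fact~\ref{fa:1kp,1}, $D_u\Psi(u)=D_z\Psi_{c(u)}(z)_{|z=u}/\T(c(u))$, then uses the B\"ottcher functional equation for the \emph{perturbed} parameter $c(u)$ to factor out $\bigl(D_zf^{S_n}_{c(u)}(c(u))\bigr)^{-1}$ exactly, bounds the remaining derivative $D_z\Psi_{c(u)}(z)_{|z=u^{d^{S_n}}}$ by Koebe at the large scale fixed by the Yoccoz construction, and carries the uniformity in $u$ through the bound on $|\log\T|$ along almost every external ray (Theorem~1.2 of~\cite{grsw}); finally $c(u)$ is replaced by $z_n(u)$ using bounded distortion on $\beta_{S_n+N_4,c(u)}$. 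In other words, the $u$-dependence that defeats your estimate is absorbed there into $1/|\T(c(u))|$, uniformly bounded along the ray --- an ingredient entirely absent from your argument. If you want to avoid $\T$ and argue through $\Psi_{c_0}$ and $\Upsilon_{c_0}$, you must first prove the missing phase-space statement about the decay of $\dist(\Psi_{c_0}(u),{\cal J}_{c_0})$ relative to $|u|-1$ on the whole segment; as written, the proposal assumes it.
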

  \begin{proof}
  Calculate
  \[ D_u\Psi(u) = \left( D_c\Psi^{-1}(c)_{|c=c(u)}\right)^{-1} =
  \frac{D_z\Psi_{c(u)}(z)_{|z=u}}{\mathcal{T}\left(c(u)\right)}    \]
  from Fact~\ref{fa:1kp,1}.

  Then,
  \[ D_z\Psi_{c(u)}(z)_{|z=u} = d^{S_n} u^{d^{S_n}-1} \cdot D_z
  \Psi_{c(u)}(z)_{|z=u^{d^{S_n}}} \cdot \left( D_z
  f^{S_n}_{c(u)}(z)_{|z=c(u)} \right)^{-1} .\]

  By Koebe's one-quarter lemma, for $n$ large enough, 
  \begin{equation}\label{equ:4kp,1} \left |  D_z \Psi_{c(u)}(z)_{|z=u^{d^{S_n}}} \right|  \leq K_1
  d^{N_4} \dist\left({\cal  J}_{c(u)},
  f^{S_n}{c(u)}\right) \leq K_2 d^{N_4} \end{equation}
  since $f^{S_n}\left(c(u)\right)$ belongs to a bounded set fixed
  by the Yoccoz puzzle construction. 

  From estimates~(\ref{equ:1kp,1},\ref{equ:4kp,1})
  and the fact that $\left| \log{\T} \right|$ is bounded on
  almost every external ray of ${\cal M}_d$, see Theorem~1.2 \cite{grsw} or apply the Abel theorem,
  \[ |u_n - u_0| |D_u\Psi(u)| \leq \frac{Q_1(c_0)}{\left| D_z
    f^{S_n}_{c(u)}(z)_{|z=c(u)} \right|} .\]

  Finally, the point $z=c(u)$ can be replaced by $z=z_n(u)$ by the
  bounded distortion of $f^{S_n}_{c(u)}$ on $\beta_{S_n+N_4,c(u)}$
  which holds if $M^*$ is sufficiently large.
    
  \end{proof}
      
    \paragraph{Proof of the Proposition.}
Start with $\Psi_c(z) = f^{-S_n}_c \circ \Psi_c \left(z^{d^{S_n}}\right)$ which
leads to 
    \[ \log D_z\Psi_{c}(z)_{z=u_n} = \log D_z f^{-S_n}_c\circ
    \Psi_c\left(u_n^{d^{S_n}}\right) + \log D_z
    \Psi_c\left(u_n^{d^{S_n}}\right) + \log d^{S_n} u_n^{d^{s_n}-1} .\] 

    Under differentiation with respect to $c$ the last term drops
    out. The derivative of the second one is bounded independently of
    $n$, thus after multiplying by $D\Psi(u)$ and integrating from
    $u_0$ to $u_n$ it goes to $0$ with $n$ by
    Lemma~\ref{lem:4ka,1}. Hence, only the first term requires
    closer attention.  Recall the nonlinearity $\mathfrak{n}f=\frac{D^2f}{Df}$.

    \begin{multline*} D_c \log D_z f_c^{-S_n}\circ\Psi_c \left(u_n^{d^{S_n}}\right) =
    (\mathfrak{n}f^{-S_n}_c)\left(\Psi_c
    \left(u_n^{d^{S_n}}\right)\right)
    D_c\Psi_c\left(u_n^{d^{S_n}}\right) -\\  D_c\log D_z
    f^{S_n}_c(z)_{|z=z_n(u)} .\end{multline*}
    Here the first term is bounded independently of $n$ by the bounded
    distortion of $f_c^{S_n}$ and goes to $0$ after integrating from
    $u_0$ to $u_n$ so we concentrate on the last one. 

    \[  D_c\log D_z   f^{S_n}_c(z)_{|z=z_n(u)} = \sum_{k=1}^{S_n-1}
    (\mathfrak{n}f_n^{S_n-k})\left((f^k_c\left(z_n(u)\right)\right) .\]

    Since the distortion of $f^{k-S_n}_c$ is bounded and in fact can be
    made as small as needed by adjusting construction parameters
    $M^*$, for every $\varepsilon>0$ and $n$ large enough

    \[ \left| D_c\log D_z   f^{S_n}_c(z)_{|z=z_n(u)} \right| \leq
    \varepsilon \sum_{k=1}^{S_n-1}
    \left| D_zf^{S_n-k}\left(f^k_c\left(z_n(u)\right)\right) \right| .\]

    By Lemma~\ref{lem:4ka,1}, the integral of this term when $u$
    changes from $u_0$ to $u_n$ is bounded by
    \[ Q(c_0)\varepsilon \sum_{k=1}^{S_n-1} \left| D_z
    f^k_{c(u)}(z)_{|z=z_n(u)} \right|^{-1} .\]
    Since the sum in this formula is uniformly bounded by  Theorem~1.2 of~\cite{grsw} 
    and
    $\varepsilon$ can be arbitrarily small,
    Proposition~\ref{prop:3kp,1} is proved.  

\paragraph{Proof of Theorem~\ref{theo:trans1}.}
We write using one after another Theorem~\ref{theo:continuity}, then
Proposition~\ref{prop:3kp,1} and Fact~\ref{fa:1kp,1}:

\begin{multline}\label{equ:17ja,1} \frac{1}{D_z\Upsilon_{c_0}(z)_{|z=c_0}} = \lim_{n\rightarrow\infty}
\frac{1}{D_z\Upsilon_{c_0}(z)_{|z=\Psi_{c_0}(u_n)}} =
\lim_{n\rightarrow\infty} \frac{D_z\Psi_{c_0}(z)_{z=u_n}}{D\Psi(u_n)} 
= \\ \lim_{n\rightarrow\infty} \left[
  \frac{D_z\Psi_{c_0}(z)_{z=u_n}}{D_z\Psi_{c_n}(z)_{z=u_n}}
  \frac{D_z\Psi_{c_n}(z)_{z=u_n}}{D\Psi(u_n)} \right]  =
\lim_{n\rightarrow\infty}  \frac{D_z\Psi_{c_n}(z)_{z=u_n}}{D\Psi(u_n)}
=\\ \lim_{n\rightarrow\infty} \frac{D\Psi^{-1}(c_n)}{D_z
  \Psi^{-1}_{c_n}(z)_{z=c_n}} = \lim_{n\rightarrow\infty} \T(c_n) .
\end{multline}

Let us recall now Theorem 1.2 of~\cite{grsw} which says that for
Lebesgue almost every $\alpha\in[0,1]$ there exist $K>0$ and
$\lambda>1$ such that for every $n>0$ and $c\in \Gamma(\alpha)$ where $\Gamma(\alpha):=\overline{\Psi \left\{re^{2\pi
  i\alpha} :\: r>1\right\}}$ the estimate $|D_z f_c^n(c)| \geq K\lambda^n$
holds.

Hence, for any such $\alpha$ and $c_0 = \lim_{r\rightarrow 1^+}
\Psi\left( re^{2\pi i\alpha} \right)$
\[ \lim_{c\rightarrow c_0, c\in\Gamma(\alpha)} \T(c) = \T(c_0) \]
by the Lebesgue dominated convergence theorem applied the sum in
formula~(\ref{equ:26mp,1}).   

Taking into account equation~(\ref{equ:17ja,1}) the proof of Theorem~\ref{theo:trans1} is complete.

  \section{Appendix}   

\subsection{Proof of claim {\rm (iv)} from Fact~\ref{theo:basic}.}
By Proposition~4 of~\cite{fine}, every point $z$ from $\partial Z\setminus {\cal J}_{c_0}$ is contained in Yoccoz piece $Y_z$ so that
$z$ is separated from the boundary of  $Y_z$ by an annulus of the modulus $m$. By the construction of box domains in~\cite{fine}, the boundary of $Y_z$ consists of a finite number of  pieces of the fixed Green equipotential line and hyperbolic geodesics of $\C\setminus {\cal J}_{c_0}$. If $\gamma$ from $\rm{(iv)}$ of  Fact~\ref{theo:basic} intersected $Y_z$ then, from the first part of \rm{(iv)}, it would have to land at a point of ${\cal J}_{c_0}$ which does not belong to the interior of $Z$. This would mean that the landing point $c_0$ of  $\gamma$ is  non-recurrent point, a contradiction. Therefore, $\gamma$ is disjoint from $Y_z$ and by  Teichmuller's module theorem, for every $\xi \in \gamma$ close enough to $c_0$,
$$d_{H}(\xi, {\cal J}_{c_0})\leq (1+C e^{-m})\; d_{H}(\xi,Z), $$
$C>0$ is a universal constant if $m>5$. Since $m\rightarrow \infty$ when $\xi\in \gamma$ tends to $c_0$, the limit from 
$\rm{(iv)}$ must be $1$.

\subsection{Proof of Theorem~\ref{theo:hedgehog}}
Non-hyperbolic systems are often studied by taking piecewise defined iterates of the map  which have some expansion and bounded distortion properties. In the case of uni-critical polynomials this leads to the construction of induced sequences of {\em box mappings},~\cite{hipek}. We will follow the description of induced box dynamics for generic parameters $c\in \partial \M$ with respect to the harmonic measure obtained in~\cite{fine}. The picture is largely simplified due to the fact that almost all returns are {\em non-close} and that only dynamics of the central branches is needed to prove the existence of hedgehogs. Fact~\ref{fact:28ha,1} follows directly from the definition of the induced box mappings and Proposition~7 from~\cite{fine}.


\begin{fact}\label{fact:28ha,1}
If  $c^* \in {\cal M}_d$ is typical with respect to the harmonic mesure then there is an
infinite induced sequence of proper analytic maps
$(\psi_{p,c^*})_{p=0}^{\infty}$ of degree $d$ with only one critical point at $0$, with their ranges $B_{p,c^*}$
and domains $B_{p+1,c^*}$ which are Jordan disks for all $p$ and satisfy $\overline{B_{p+1,c^*}} \subset
B_{p,c^*}$. Every $\psi_{p,c^*}$ is an iterate of $f_{c^*}$. 

Moreover,
\begin{itemize}
\item the sequence of $(\psi_{p,c^*})_{p=0}^{\infty}$ shows an exponential  decay of geometry,
\[ m_p(c^*):= \mod\hspace{-0.01cm}\left( B_{p,c^*} \setminus
\overline{B_{p+1,c^*}}\right) \leq \lambda_{c^*}^p\; ,\]
where $\lambda_{c^*}>1$,
\item every $\psi_{p,c^*}$ has a proper analytic extension of the degree $d$  to $ D_{p,c^*}$, $B_{p,c^*}\supset D_{p,c^*}\supset B_{p+1,c^*}$,  with the range $B_{p-1,c^*}$.
\end{itemize}

\end{fact}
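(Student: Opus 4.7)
The plan is to derive Fact~\ref{fact:28ha,1} by packaging the Yoccoz--puzzle and return-time construction of Section~\ref{sec:25hp,1} into an infinite nest of proper analytic maps. First, for a typical $c^*$ with respect to the harmonic measure I would invoke Proposition~7 of~\cite{fine} together with the sequence $(S_n)_{n\geq 1}$ of returns to the large scale to produce an infinite chain of critical Yoccoz pieces $b_{S_n+N_1,c^*}$ on which $f_{c^*}^{S_n}$ is a proper degree-$d$ map onto $b_{N_1,c^*}$ (uni-criticality, as in Lemma~\ref{lem:24xa,1}). I would then set $B_{p,c^*}$ to be the critical piece at the $p$-th level of the principal non-close nest and define $\psi_{p,c^*}$ to be the first return map of $f_{c^*}$ from $B_{p+1,c^*}$ into $B_{p,c^*}$; by construction this is an iterate of $f_{c^*}$, automatically a branched cover of degree $d$ with unique critical point at $0$.

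Second, I would verify the Jordan-disk structure and the strict nesting $\overline{B_{p+1,c^*}} \subset B_{p,c^*}$. The Yoccoz pieces in~\cite{fine} are bounded by arcs of a fixed equipotential and finitely many external rays landing at preperiodic points, and under the $M^*$-roundness hypotheses from Section~\ref{sec:25hp,1} they are quasi-discs. Strict nesting is immediate from the separating annulus of modulus at least $M^*$ built into the principal nest. The extension $\psi_{p,c^*} : D_{p,c^*} \to B_{p-1,c^*}$ of degree $d$ comes from pulling the enlarged outer piece $B_{p-1,c^*}$ back through the very iterate by which $\psi_{p,c^*}$ was defined; this realises the standard polynomial-like box-mapping collar structure from~\cite{hipek}, and $B_{p,c^*}\supset D_{p,c^*} \supset B_{p+1,c^*}$ follows from nesting of the domains of first entry.

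The most delicate point is the exponential control $m_p(c^*) \leq \lambda_{c^*}^{\,p}$. Away from $0$, the annulus $B_{p,c^*} \setminus \overline{B_{p+1,c^*}}$ is a univalent pull-back of $B_{p-1,c^*} \setminus \overline{B_{p,c^*}}$ by an iterate of bounded distortion (thanks to the non-close returns $f_{c^*}^{S_n}(0) \in b_{N_4}$), while passage through the critical point divides moduli by $d$. Iterating this two-step relation and combining it with Lemma~\ref{lem:26xp,1} (so that $\log(\diam\beta_{S_n+N_1})^{-1}=S_n\lambda+o_{M^*}(S_n)$) and Teichm\"uller's modulus estimate~\cite{lehvi} converts the regularity of the return times $S_{n+1}/S_n\to 1$ and $S_{n+1}-S_n\to\infty$ into the claimed exponential bound. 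The main obstacle is to extract an intrinsic constant $\lambda_{c^*}>1$: one must rule out a degeneration whereby long non-close returns could make consecutive moduli $m_p$ and $m_{p+1}$ nearly incomparable, and this is precisely where the full strength of Proposition~7 of~\cite{fine} on the distribution of non-close return times along harmonic-typical orbits is used.
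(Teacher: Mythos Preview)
The paper does not give a proof of this Fact at all: the sentence immediately preceding it reads ``Fact~\ref{fact:28ha,1} follows directly from the definition of the induced box mappings and Proposition~7 from~\cite{fine}.'' In other words, the authors treat the entire statement --- existence of the nested Jordan disks $B_{p,c^*}$, the degree-$d$ proper maps $\psi_{p,c^*}$, the extensions to $D_{p,c^*}$, and the exponential modulus bound --- as a black-box import from~\cite{fine}.

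Your proposal goes in the same direction (you also cite Proposition~7 of~\cite{fine}), but you then try to re-derive the construction by grafting it onto the $(S_n)$ machinery of Section~\ref{sec:25hp,1}. That machinery is set up via Proposition~8 of~\cite{fine}, not Proposition~7, and serves a different purpose in the paper (controlling the similarity map $\Upsilon_{c_0}$, not building the hedgehog box-mapping nest). Nothing you write is wrong, but the invocation of Lemma~\ref{lem:26xp,1} and the return-time ratios $S_{n+1}/S_n\to 1$ to get the exponential modulus growth is an unnecessary detour: exponential decay of geometry for a principal nest with only non-close returns is standard box-mapping theory (cf.~\cite{hipek}) and is precisely the content that Proposition~7 of~\cite{fine} packages for harmonic-typical parameters. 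Your ``most delicate point'' is, from the paper's point of view, already contained in the cited reference.

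So: your outline is correct and compatible with the paper's approach, but it is considerably more elaborate than what the paper actually does, and it blurs the distinction between the two separate imports from~\cite{fine} (Proposition~7 for Fact~\ref{fact:28ha,1} versus Proposition~8 for the $(S_n)$ nest of Section~\ref{sec:25hp,1}).
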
 
We are ready to prove Theorem~\ref{theo:hedgehog}. Let $c^*\in \partial \M$ be a typical parameter with respect to the harmonic measure and 
$(\psi_{p,c^*})_{p=0}^{\infty}$ the corresponding induced sequence. 
Since $c^*$ is fixed, we will drop it from the notation whenever there is no confusion.

Let $\epsilon$ and $m$ be the parameters from the definition of hedgehog neighborhoods, Theorem~\ref{theo:hedgehog}.  We choose  a large $k<p$ so that $1/d^k<\epsilon/10$ and $m_p(c^*)\geq 10m d^{k}$. Since ${\cal J}_{c^*}$ is connected, there is a continuum ${\cal C}_p\subset {\cal J}_{c^*}$ which traverses 
$B_{p-1} \setminus \overline{B_{p}}$ for $p$ large enough. Let us put 
$$\Phi_{k,p}=\psi_{p}\circ\dots\circ\psi_{p+k-1}$$ which is a proper analytic of 
map of degree $d^k$ on the annulus  $A=\Phi_{k,p}^{-1}(B_{p-1} \setminus \overline{B_{p}})\subset D_{p+k}$. Therefore, the modulus of $A$ is at least $10m$.

Since for every $p>0$, $\psi_{p}(0)\in B_{p-1}$ and  $\psi_{p}$ is  the composition of $z^d$ with 
a univalent map of a  vanishing distortion when $p$ tends to $\infty$, the preimages  $\Phi_{p,k}^{-1}({\cal C}_p)$ form a $5/d^k$-net relative to the size of the annulus  
$A$.
We have constructed a $(m,\epsilon)$-hedgehog layer around
$c^*\in {\cal J}_{c^*}$. Since hedgehog neighborhoods are quasiconformal invariants, their  existence at $c^*\in \partial \M$
follows from Fact~\ref{theo:basic}.

\end{document}